\theoremstyle{plain}
\newtheorem{satz}{Proposition}[section]
\newtheorem{theo}[satz]{Theorem}
\newtheorem{koro}[satz]{Corollary}
\newtheorem{lemm}[satz]{Lemma}
\theoremstyle{definition}
\newtheorem{defin}[satz]{Definition}
\theoremstyle{remark}
\newtheorem{beisp}[satz]{Example}
\newtheorem{bem}  [satz]{Remark}
\newcounter{countmatrix}
\newcommand{\nat}{\mathbb N}
\newcommand{\zet}{\mathbb Z}
\newcommand{\rat}{\mathbb Q}
\newcommand{\real}{\mathbb R}
\DeclareMathOperator{\Id}{id}
\newcommand{\iso}{\simeq}
\DeclareMathOperator{\interior}{Int}
\DeclareMathOperator{\Ob}{Ob}
\DeclareMathOperator{\Mor}{Mor}
\DeclareMathOperator{\RMOD}{\textbf{R-MOD}}
\DeclareMathOperator{\KAR}{\textbf{Kar}}
\DeclareMathOperator{\ucob}{\textbf{uCob}^2}
\DeclareMathOperator{\mat}{\textbf{Mat}}
\DeclareMathOperator{\kom}{\textbf{Kom}}
\DeclareMathOperator{\kob}{\textbf{Kob}}
\DeclareMathOperator{\ukob}{\textbf{uKob}}
\newcommand{\bn}[1]{\llbracket #1 \rrbracket}
\newcommand{\uu}{{\Large$\rcurvearrowup\lcurvearrowup$}}
\newcommand{\dd}{{\Large$\lcurvearrowdown\rcurvearrowdown$}}
\newcommand{\du}{{\Large$\lcurvearrowdown\lcurvearrowup$}}
\newcommand{\rir}{{\Large\raisebox{-0.1cm}{$\lcurvearrowright$\makebox[0pt][r]{\raisebox{0.2cm}{$\rcurvearrowright$}}}}}
\newcommand{\ril}{{\Large\raisebox{-0.1cm}{$\lcurvearrowright$\makebox[0pt][r]{\raisebox{0.2cm}{$\lcurvearrowleft$}}}}}
\newcommand{\uup}{\raisebox{-0.1cm}{\includegraphics[scale=0.5]{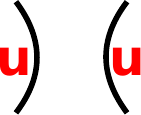}}}
\newcommand{\uupp}{\raisebox{-0.215cm}{\includegraphics[scale=0.5]{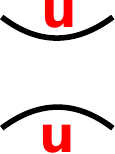}}}
\newcommand{\udp}{\raisebox{-0.1cm}{\includegraphics[scale=0.5]{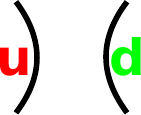}}}
\newcommand{\udpp}{\raisebox{-0.215cm}{\includegraphics[scale=0.5]{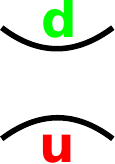}}}
\newcommand{\dup}{\raisebox{-0.1cm}{\includegraphics[scale=0.5]{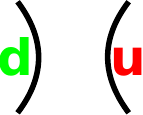}}}
\newcommand{\dupp}{\raisebox{-0.215cm}{\includegraphics[scale=0.5]{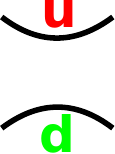}}}
\newcommand{\ddp}{\raisebox{-0.1cm}{\includegraphics[scale=0.5]{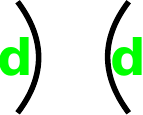}}}
\newcommand{\ddpp}{\raisebox{-0.215cm}{\includegraphics[scale=0.5]{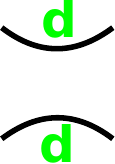}}}
\newcommand{\smo}{\raisebox{-0.1cm}{\includegraphics[scale=0.5]{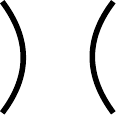}}}
\newcommand{\hsmo}{\raisebox{-0.12cm}{\includegraphics[scale=0.5]{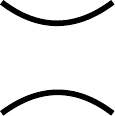}}}
\newcommand{\none}{\raisebox{-0.1cm}{\includegraphics[scale=0.5]{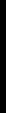}}}
\newcommand{\down}{\raisebox{-0.1cm}{\includegraphics[scale=0.5]{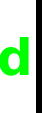}}}
\newcommand{\up}{\raisebox{-0.1cm}{\includegraphics[scale=0.5]{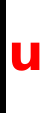}}}
\newlength{\standardunitlength}
\newcommand{\jpg}[2]{{\hspace{-3pt}\begin{array}{c}%
  \raisebox{-2.5pt}{\includegraphics[width=#1]{figs/#2.pdf}}%
\end{array}\hspace{-3pt}}}
\newcounter{In}
\title{Khovanov homology for virtual tangles and applications}
\author{Daniel Tubbenhauer\thanks{The author was supported by the German Research 
Foundation (Deutsche Forschungsgemeinschaft (DFG)) 
through the Institutional Strategy of the University of G\"{o}ttingen and by the Graduiertenkolleg 1493 ``Mathematische Strukturen in der modernen Quantenphysik''.}}
\date{Last compiled \today; last edited \today}
\begin{document}

\maketitle
\begin{abstract}
We extend the cobordism based categorification of the virtual Jones polynomial to virtual tangles. This extension is combinatorial and has semi-local properties. We use the semi-local property to prove an applications, i.e. we give a discussion of Lee's degeneration of virtual homology.
\end{abstract}
\setcounter{tocdepth}{2}
\tableofcontents
\begin{figure}[p]
\centering
     \includegraphics[scale=0.68]{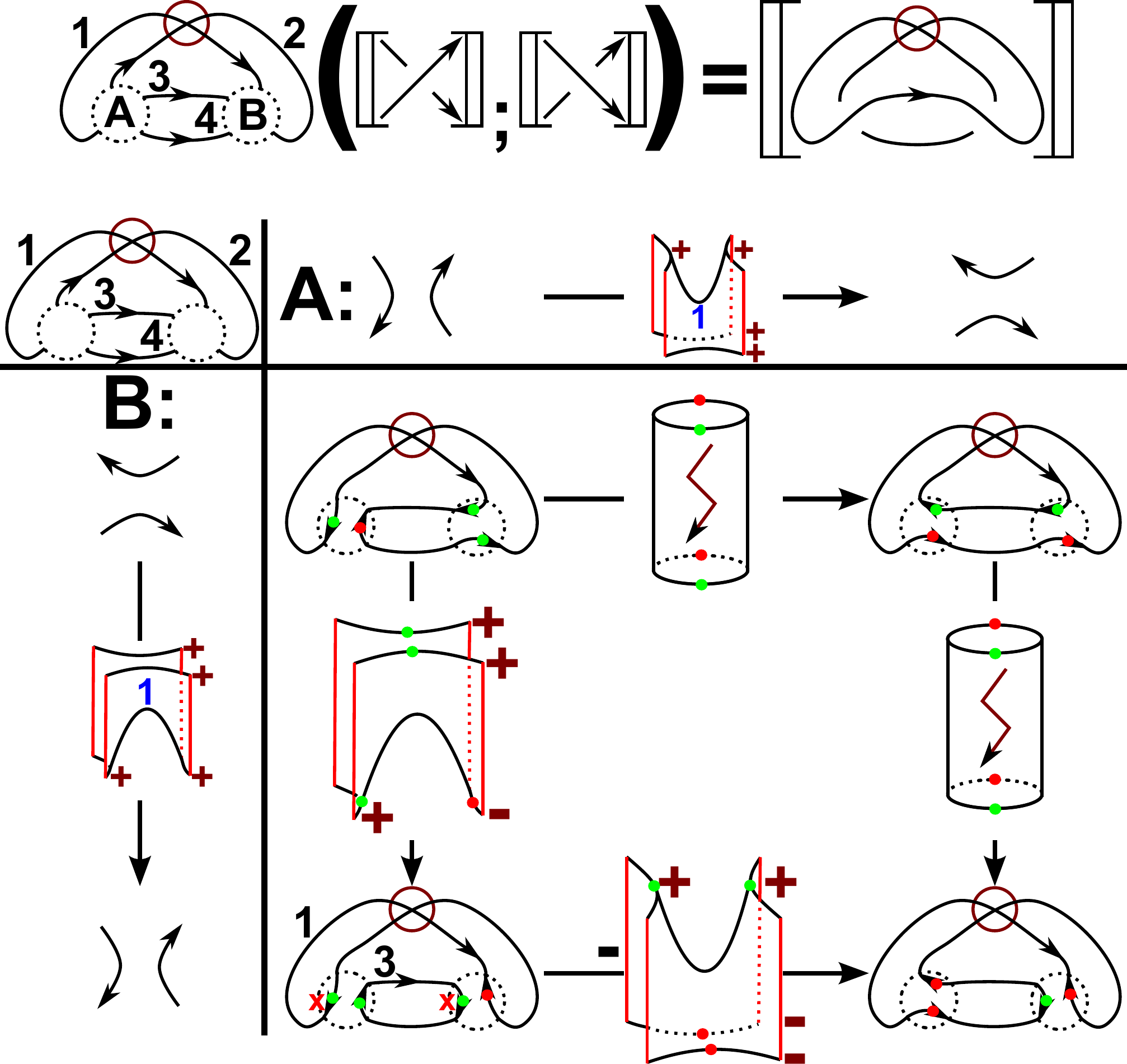}
  \caption{The main example. After we have fixed an orientation/numbering of the circuit diagram we only have to compare whether the local orientations match (green) or mismatch (red) and compose if necessary with $\Phi^-_+$ (red). Iff we have a double mismatch at the top and bottom, then we add a bolt symbol.}
  \label{figure0-main}
\end{figure}
\newpage
\section*{Introduction}\label{intro}
\addcontentsline{toc}{section}{Introduction}
Virtual knots and virtual links are a generalisation of classical knots and classical links. They were introduced by L.~Kauffman. They can be seen as a combinatorial interpretation of copies of $S^1\times\cdots\times S^1$ embedded in thickened surfaces with genus $g$ instead of copies of $S^1\times\cdots\times S^1$ embedded in $S^3$.

Virtual link diagrams are four valent graph embedded into $\mathbb{R}^2$ together with a decoration of every vertex. The decorations are either \textit{overcrossings} $\slashoverback$, \textit{undercrossings} $\backoverslash$ or \textit{virtual crossings}, latter are marked with a circle.

A virtual link is an equivalence class of virtual link diagrams modulo the \textit{generalised Reidemeister moves} in Figure \ref{figure0-1} and isotopies.

The notion of \textit{oriented} virtual link diagrams and an \textit{oriented} virtual links are equivalent, but latter modulo \textit{oriented} generalised Reidemeister moves and isotopies.

We call the moves RM1, RM2 and RM3 the \emph{classical moves}, the moves vRM1, vRM2 and vRM3 the \emph{virtual moves} and the move mRM the \emph{mixed move}. 

\begin{figure}[ht]
  \centering
     \includegraphics[scale=0.65]{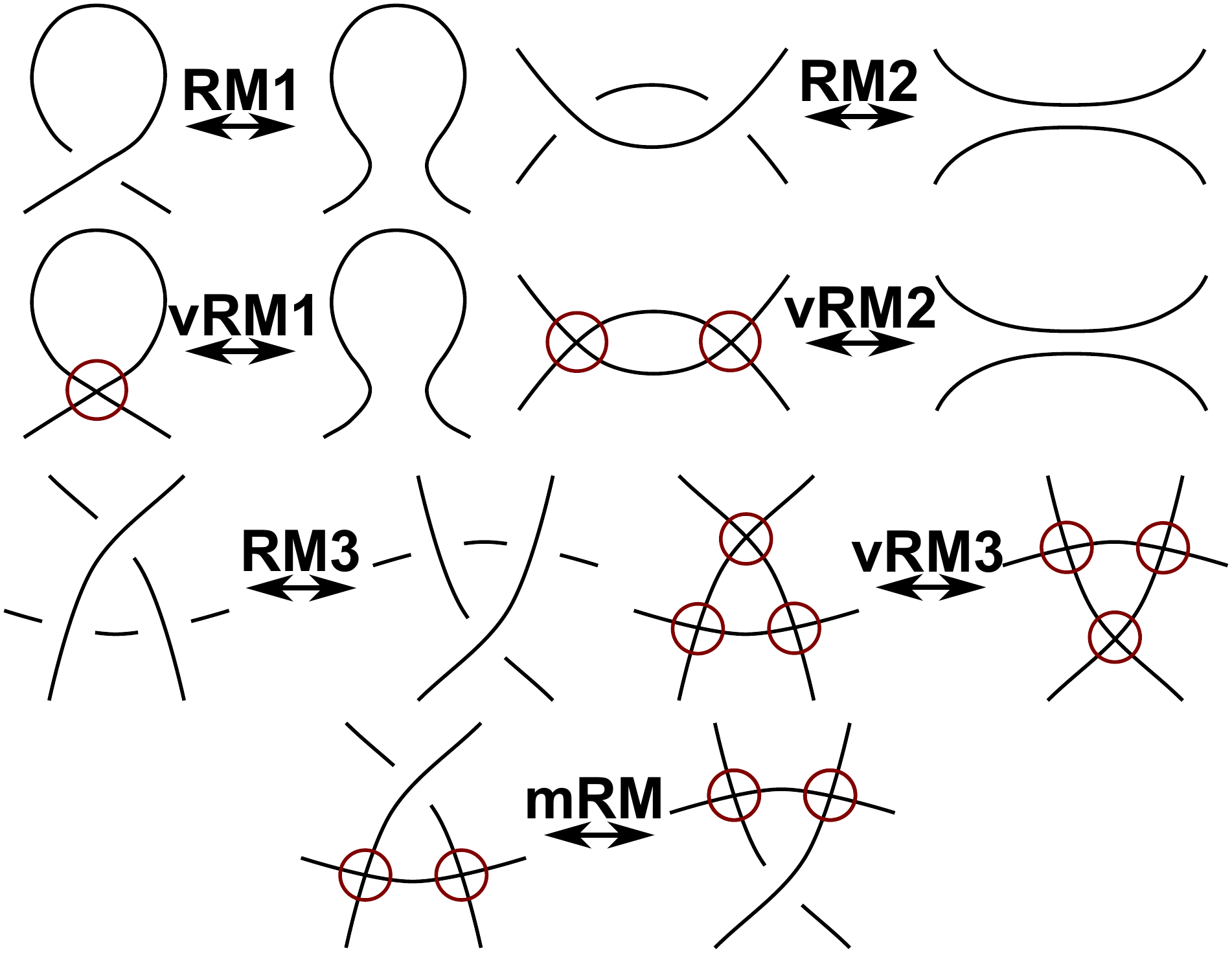}
  \caption{The generalised Reidemeister moves are the moves pictured plus their mirror image.}
  \label{figure0-1}
\end{figure}

In this paper we are also interested in \textit{virtual tangle diagrams} and \textit{virtual tangles}. The first ones are graphs embedded in a disk $D^2$ such that every vertex is either one valent or four valent. The four valent vertices are, like before, labeled with an \emph{overcrossing} $\slashoverback$, an \textit{undercrossing} $\backoverslash$ or a \textit{virtual crossing}. The one valent vertices are part of the boundary of $D^2$ and are called \emph{boundary points}. We call a virtual tangle diagram with $k$ one valent vertices a virtual tangle diagram with \textit{$k$-boundary points}.

A virtual tangle with $k$-boundary points is an equivalence class of virtual tangle diagrams with $k$-boundary points modulo the generalised Reidemeister moves and boundary preserving isotopies. We note that every of the moves of Figure \ref{figure0-1} can be seen as a virtual tangle diagram. Moreover, examples are pictured in Section \ref{v-tangles}, e.g. in Figure \ref{figure1-1}.

The notions of \textit{oriented} virtual tangle diagrams and an \textit{oriented} virtual tangles can be defined analogous, but latter modulo \textit{oriented} generalised Reidemeister moves and boundary preserving isotopies.

We use the short hand notation c- and v- for everything that starts with classical or virtual, e.g. c-knot means classical knot and v-crossing means virtual crossing.

If the reader is unfamiliar with the notion v-link or v-tangle, we refer to some introduction papers of L.~Kauffman and V.~Manturov, e.g. \cite{ka2} and \cite{kama} and the references therein.

A well-known invariant of v-links is the \textit{virtual Jones polynomial}, i.e. an extension of the Jones polynomial from c-links to v-links. Moreover, M.~Khovanov published in \cite{kh1} a \textit{categorification} of the Jones polynomial, i.e. a $q$-graded chain complex whose homotopy type is a c-link invariant and whose graded Euler characteristic is the Jones polynomial. This categorification is called \textit{Khovanov (co-)homology}.

The Khovanov complex is a highly studied invariant of c-links. D.~Bar-Natan gave a cobordism based exposition of this categorification in \cite{bn1}. D.~Bar-Natans picture has many nice properties, e.g. it can be done \textit{local}, i.e. it works for c-tangles as well.

It is a natural question if the Khovanov complex extents from c-links to v-links. The answer is positive, i.e. there is more then one \textit{categorification of the virtual Jones polynomial}, one over $\zet/2$ given by V.~Manturov in \cite{ma1}, one over $\zet$ given by V.~Manturov in \cite{ma2} and one over $\zet$ using cobordisms in the spirit of D.~Bar-Natan in \cite{tub}.

The case of v-links is more difficult (combinatorial) then the classical case. That is a reason why in \cite{tub} the D.~Bar-Natan approach was not extended to v-tangles. This is the main goal of the paper.

In this paper we will answer the question if the categorification extends from c-tangles to v-tangles positive, i.e. it is trivial to extend the notions over $\zet/2$ or in a trivial way (by setting open saddles to be zero). Latter has an obvious disadvantage, i.e. it is neither a ``good'' invariant of v-tangles nor can it be used to calculate bigger complexes by ``tensoring'' smaller pieces.  

But we give a local notion that is a strong invariant of v-tangles (see Section \ref{v-tangles}) and allows ``tensoring'' of smaller pieces (see Section \ref{circuit}).

As an application of this construction we prove in Section \ref{leedeg} that the degenerations of Lee's variant (see her paper \cite{lee}), i.e. that the homology in her case just ``counts'' the number of components, also extents to v-links. Note that latter result is quite surprising since the complex will contain lots of $0$-morphisms (our main observation is that it will also contain many isomorphisms).

We note that, given the semi-local properties of our construction, it is possible to write a ``fast computation'' algorithm in the sense of D.~Bar-Natan in \cite{bn3}
 
\section*{A brief summary}\label{summary}
\addcontentsline{toc}{section}{A brief summary}
Before we give a brief summary we recall the main ideas of the cobordism based construction of virtual Khovanov homology in \cite{tub}. We will assume that the reader is familiar with the notion of the classical Khovanov complex, e.g. the construction of the Khovanov cube based on so-called \emph{resolutions of the crossings}. There are many good introduction to classical Khovanov homology, e.g. a nice exposition of the classical Khovanov homology can be found in D.~Bar-Natan's paper \cite{bn2}. Moreover we assume that the reader is familiar with the cobordism based construction of the classical Khovanonv complex of D.~Bar-Natan in \cite{bn1}. Recall that latter involves the \textit{local relation} in Figure \ref{figure0-2}.

\begin{figure}[ht]
  \centering
     \includegraphics[scale=0.62]{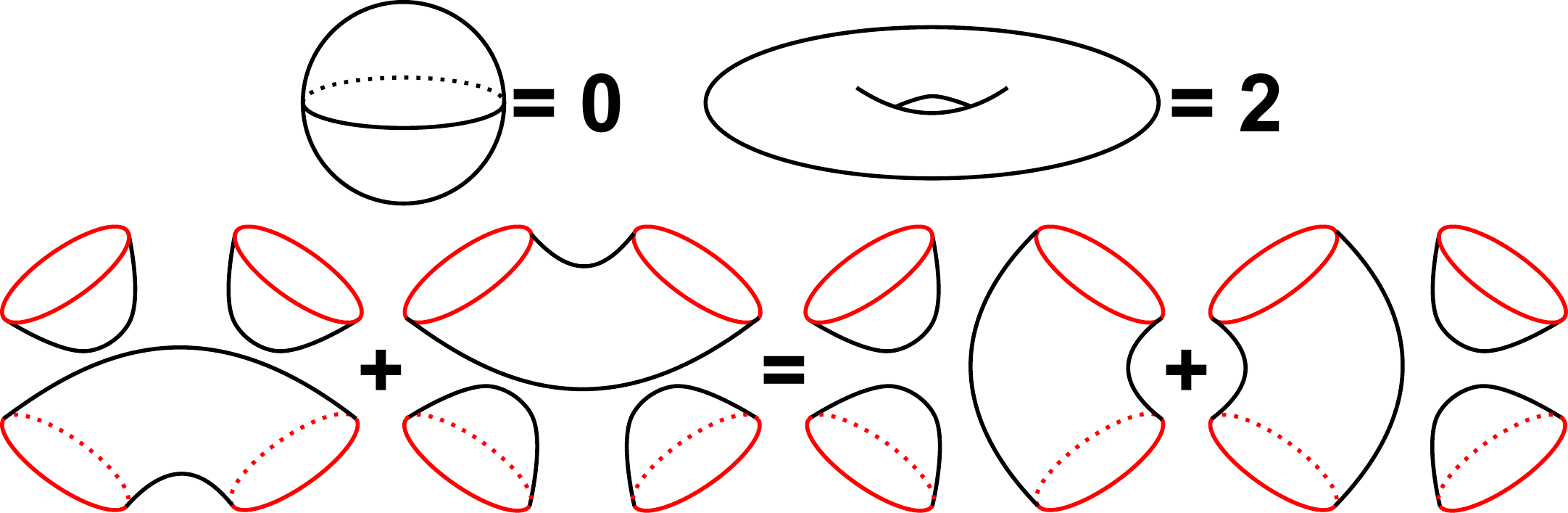}
  \caption{The local relations. A cobordism that contains a sphere $S$ should be zero, a cobordism that contains a torus $T$ should be two times the cobordism without the torus and the four tubes relation.}
  \label{figure0-2}
\end{figure}

To maintain readability, we repress in this short summary the more technical points about the construction, e.g. the placement of the signs. For more details see \cite{tub}. Note that this technical points are a main reason for the length of the paper. The following table shows the connection between the classical and the virtual case.
\begin{center}
\begin{tabular}{|c|c|c|}
\hline $\phantom{.}$ & \textbf{classical} & \textbf{virtual} \\ 
\hline \textbf{objects} & c-link resolutions & v-link resolutions \\ 
\hline \textbf{morphisms} & orientable cobordisms & possible non-orientable cobordisms \\ 
\hline \textbf{cobordisms} & embedded & immersed \\ 
\hline \textbf{decorations} & none & $+,-$ at the boundary \\ 
\hline \textbf{signs} & usual sign placement & rather technical sign placement \\ 
\hline 
\end{tabular} 
\end{center}
Hence, a main point in the construction of the virtual Khovanov complex is to say which saddles, i.e. morphisms, are orientable and which are non-orientable, how to place the decorations and how to place the signs. This is done in the following way.
\begin{itemize}
\item Every saddle either split one circle in a resolution (orientable, called \emph{comultiplication}, denoted $\Delta$. See Figure \ref{figure1-2} - upper row, fourth morphism), glues two circles in a resolution (orientable, called \emph{multiplication}, denoted $m$. See Figure \ref{figure1-2} - upper row, fifth morphism) or does not change the number of circles at all (non-orientable, called \emph{M\"obius cobordism}, denoted $\theta$. See Figure \ref{figure1-2} - upper row, last morphism).
\item Every saddle can be local denoted (up to a rotation) by a formal symbol $\smoothing\rightarrow\hsmoothing$ (both smoothings are neighbourhoods of the crossing). The \emph{glueing numbers}, i.e. the decorations, are now spread by \emph{choosing a formal orientation} for the resolution. We note that the construction will not depend on the choice.
\item After all resolutions have an orientation, a saddle could for example be of the form \du$\rightarrow$\ril. This is the \emph{standard form}, i.e. in this case every glueing number will be $+$.
\item Now spread the decorations as follows. Every boundary component gets a $+$ iff the orientation is like in the standard case and a $-$ otherwise. The \emph{degenerated} cases (everything non-alternating), e.g. \dd$\rightarrow$\rir, are the non-orientable surfaces and do not get any decorations.
\item There is a relation how the saddles behave if one changes the decorations. See for example Figure \ref{figure1-3} bottom row and Figure \ref{figure1-4} right part. Because of this relations we say that $m$ is of type $+1$, $\Delta$ of type $-1$ and $\theta$ of type $0$.
\item The signs are spread based on numbering of the circles in the resolutions and on a special x-marker for the crossings (rather technical, see \cite{tub} for details or Definition \ref{defin-saddlestructure} for a short introduction). A x-marker is for example pictured in Figure \ref{figure0-main}.
\end{itemize}
To construct the virtual Khovanov complex for v-tangles we need to extend the notions above in such a way that they still work for open cobordisms. A first generalisation is easy, i.e. we will still use immersed possible non-orientable surfaces with boundary decorations, but we allow vertical boundary components, e.g. the three v-Reidemeister cobordisms vRM1, vRM2 and vRM3 cobordisms in Figure \ref{figure0-3}.
\begin{figure}[ht]
  \centering
     \includegraphics[scale=1]{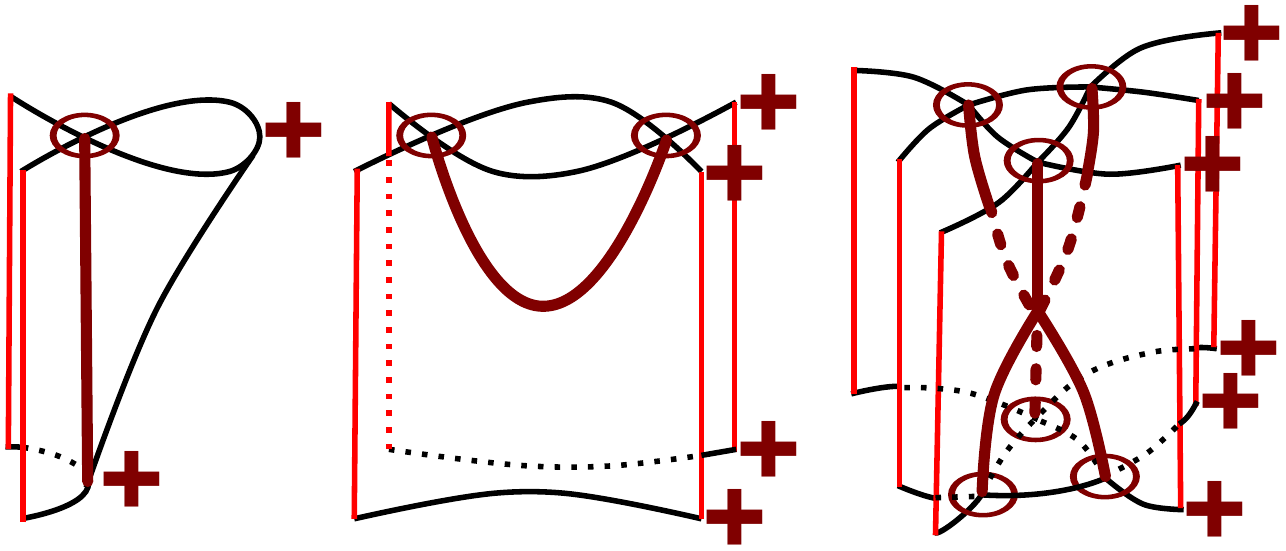}
  \caption{The virtual Reidemeister cobordisms. The red (vertical) lines are the vertical boundary components.}
  \label{figure0-3}
\end{figure}
One main point is the question what to do with the \emph{open saddles}, i.e. saddles with no closed boundary. See bottom row in Figure \ref{figure1-2}. A possible solution is to define them to zero. But this has two major problems. First the loss of information is big and second we would not have local properties like in the classical case (``tensoring'' of smaller parts) since an open saddle can, after closing some of his boundary circles, become either $m$, $\Delta$ or $\theta$. See Figure \ref{figure0-4}. 
\begin{figure}[ht]
  \centering
     \includegraphics[scale=0.5]{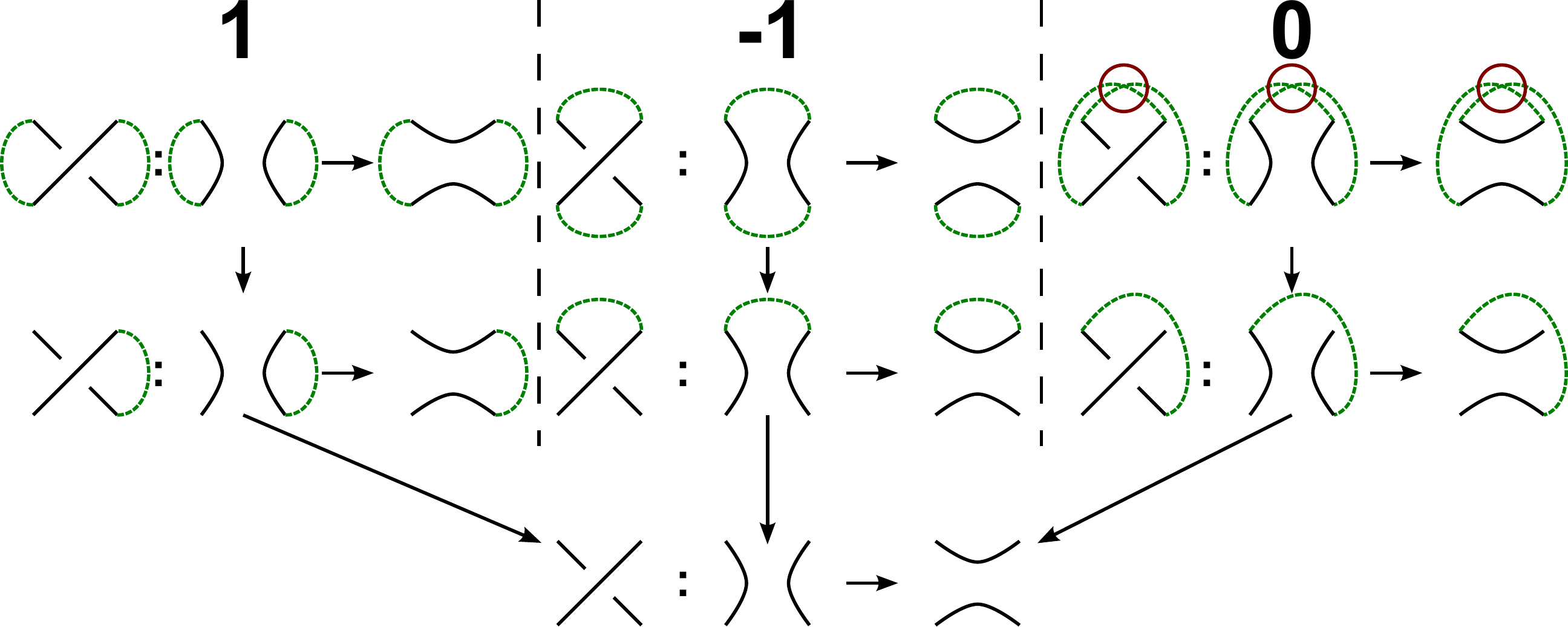}
  \caption{The saddle without closed boundary strings can come from any of the three closed cases.}
  \label{figure0-4}
\end{figure}
Hence, an information mod 3 is missing. We therefore consider morphisms with an \emph{indicator}, i.e. an element of the set $\{0,+1,-1\}$, e.g. see Figure \ref{figure1-2}. Then, after taking care of some technical difficulties, the concept extens from c-tangles to v-tangles.

The outline of the paper is as follows.
\begin{itemize}
\item In Section \ref{v-tangles} we define the notions of a \emph{v-tangle (diagram)} with $k$-boundary points. Moreover, we define the geometric category we are working in and define \emph{two different} versions of the virtual Khovanov complex. We show in Theorem \ref{theo-geoinvarianz} that both are invariants of v-tangles and we show in Theorem \ref{theo-marker} that both versions are equal for c-tangles and v-links.
\item In Section \ref{circuit} we discuss the local properties of the construction. We define  the notion of a \textit{decorated circuit diagram}, i.e. the virtual analogon of a planar diagram, and we show in Theorem \ref{theo-semiloc} that the construction has semi-local properties, i.e. the classical notion extents to a certain class of v-tangle diagrams. A main tool is the \emph{dot-calculus} which is sketched in Figure \ref{figure0-main}, i.e. change the decorations iff the orientations of the input diagram and the local piece mismatch (red dot), do not change them iff they match (green dot) and add in a bolt symbol (non-orientable) iff at the top and the bottom one gets a green and a red dot.
\item In Section \ref{leedeg} we show that the extension of Lee's variant of Khovanov homology still has the same ``degeneration'' as in the classical case, i.e. the homology just ``counts'' the number of v-link components. See Theorem \ref{theo-leedeg}. Note that this is rather surprising since the map $\theta$ will be a $0$-morphism if $2$ is invertible. Our main observation why the degeneration still holds is explained in Example \ref{beisp-leedeg}. In order to prove Theorem \ref{theo-leedeg} we use a special trick, i.e. we define two idempotents called \emph{down} and \emph{up}, go to the \emph{Karoubi envelope} and prove that there, using the semi-local construction of Section \ref{circuit}, the question simplifies to the case of counting \emph{non-alternating resolutions}. See Figure \ref{figure4-2}.
\end{itemize} 

\section*{Notation}\label{notation}
For a v-tangle diagram $T^k_D$ we call $\smoothing$ the \textit{$0$-} and $\hsmoothing$ the \textit{$1$-resolution} of the crossing $\slashoverback$. For an oriented v-tangle diagram $T^k_D$ we call $\overcrossing$ a \textit{positive} and $\undercrossing$ a \textit{negative} crossing. The \textit{number of positive crossings} is called $n_+$ and the \textit{number of negative crossings} is called $n_-$.

For a given v-tangle diagram $T^k_D$ with $n$-numbered crossings we define a collection of closed curves and open strings $\gamma_a$ in the following way: Let $a$ be a word of length $n$ in the alphabet $\left\{0,1\right\}$. Then $\gamma_a$ is the collection of closed curves and open strings which arise when one performs a $a_i$-resolution at the $i$-th crossing of $T^k_D$ for all $i=1,\dots,n$. We call such a collection $\gamma_a$ the \textit{$a$-th resolution} of $T^k_D$.

We can choose an orientation for the different components of $\gamma_a$. We call such a $\gamma_a$ an \textit{orientated resolution}.

If we ignore orientations, then there are $2^n$ different resolutions $\gamma_a$ of $T^k_D$. We say a resolution has length $m$ if it contains exactly $m$ 1-letters. That is $m=\sum_{i=1}^na_i$.

For two resolutions $\gamma_a$ and $\gamma_{a'}$ with $a_r=0$ and $a'_r=1$ for one fixed $r$ and $a_i=a'_i$ for $i\neq r$ we define a \textit{saddle between the resolutions $S$}. This means: choose a small (no other crossing, classical and virtual, should be involved) neighbourhood $N$ of the $r$-th crossing and define a cobordism between $\gamma_a$ and $\gamma_{a'}$ to be the identity outside of $N$ and a saddle inside of $N$.

Sometimes we need a so-called \emph{spanning tree argument}, i.e. choose a spanning tree of a cube (like in Figure \ref{figure0-5}) and change e.g. orientations such that the edges of the tree change in a suitable way, starting at leafs and remove them and repeat.
\begin{figure}[ht]
  \centering
     \includegraphics[width=0.9\linewidth]{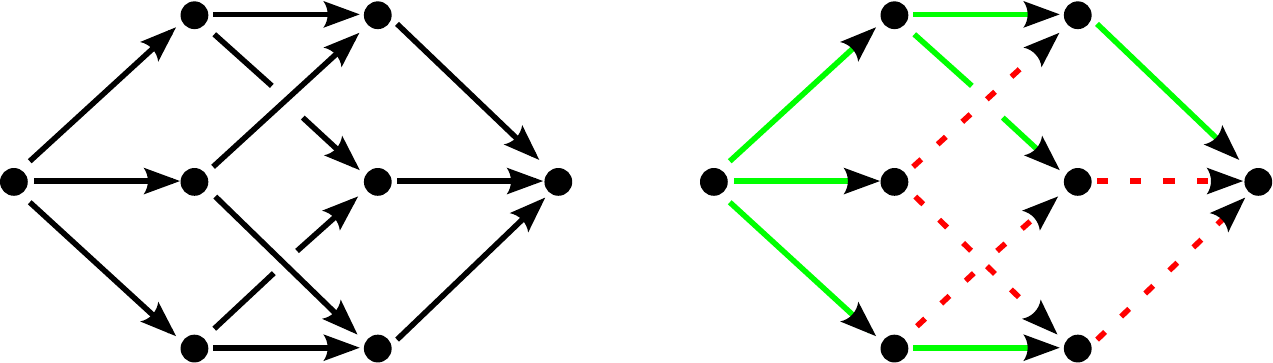}
  \caption{A Khovanov cube and a spanning tree of the cube (green edges).}
  \label{figure0-5}
\end{figure}

Note that any v-tangle diagram should be oriented in the usual sense. But with a slight abuse of notation, we will repress this orientation throughout the whole paper. Recall that these orientations are needed for the shifts in homology gradings, see for example \cite{bn2}.

\section*{Acknowledgement}
I wish to thank V.~Manturov for corrections and many helpful comments. Moreover i wish to thank A.~Kaestner and L.~Kauffman for suggestions and helpful comments helping me to write a computer program for calculations in the future.

\section{The geometric complex for virtual tangles}\label{v-tangles}
In this section we describe how the virtual Khovanov complex can be extended in such a way that it also works for v-tangles and not just for v-links. Our extension follows the ideas of D.~Bar-Natan in \cite{bn1}.

First we define the notion of a \textit{virtual tangle (diagram)}, called \textit{v-tangle (diagram)}.
\begin{defin}[Virtual tangle]\label{defin-vtangle}
A \textit{virtual tangle diagram with $k\in\nat$ boundary points} $T^k_D$ is a planar graph embedded in a disk $D^2$. This planar graph is a collection of \textit{usual vertices} and $k$-\textit{boundary vertices}. We also allow circles, i.e. closed edges without any vertices.

The usual vertices are all of valency four. Any of these vertices is either an overcrossing $\slashoverback$ or an undercrossing $\backoverslash$ or a virtual crossing. Latter is marked with a circle. The boundary vertices are of valency one and are part of the boundary of $D^2$.

We call the crossings $\slashoverback$ and $\backoverslash$ \textit{classical crossings} or just \textit{crossings} and a virtual tangle diagram without virtual crossings a \textit{classical tangle diagram}.

A \textit{virtual tangle with $k\in\nat$ boundary points} $T^k$ is an equivalence class of virtual tangle diagrams $T^k_D$ module boundary preserving isotopies and \textit{generalised Reidemeister moves} (see Figure \ref{figure0-1}).

We call a virtual tangle $T^k$ \textit{classical} if the set $T^k$ contains a classical tangle diagram. A v-circle is a circle without classical crossings and a v-string is a string starting and ending at the boundary without classical crossings. We call a v-circle/v-string without virtual crossings a \textit{c-circle/c-string}.

The \textit{closure of a v-tangle diagram with *-marker} $\mathrm{Cl}(T^k_D)$ is a v-link diagram which is constructed by capping of neighbouring boundary points (starting from a fixed point marked with the *-marker and going counter-clockwise) without creating new virtual crossings. For an example see Figure \ref{figure1-1}.

There are exactly two, maybe non equivalent, closures of any v-tangle diagram. In the figure below the two closures are pictured using green edges.

\begin{figure}[ht]
  \centering
     \includegraphics[scale=0.4]{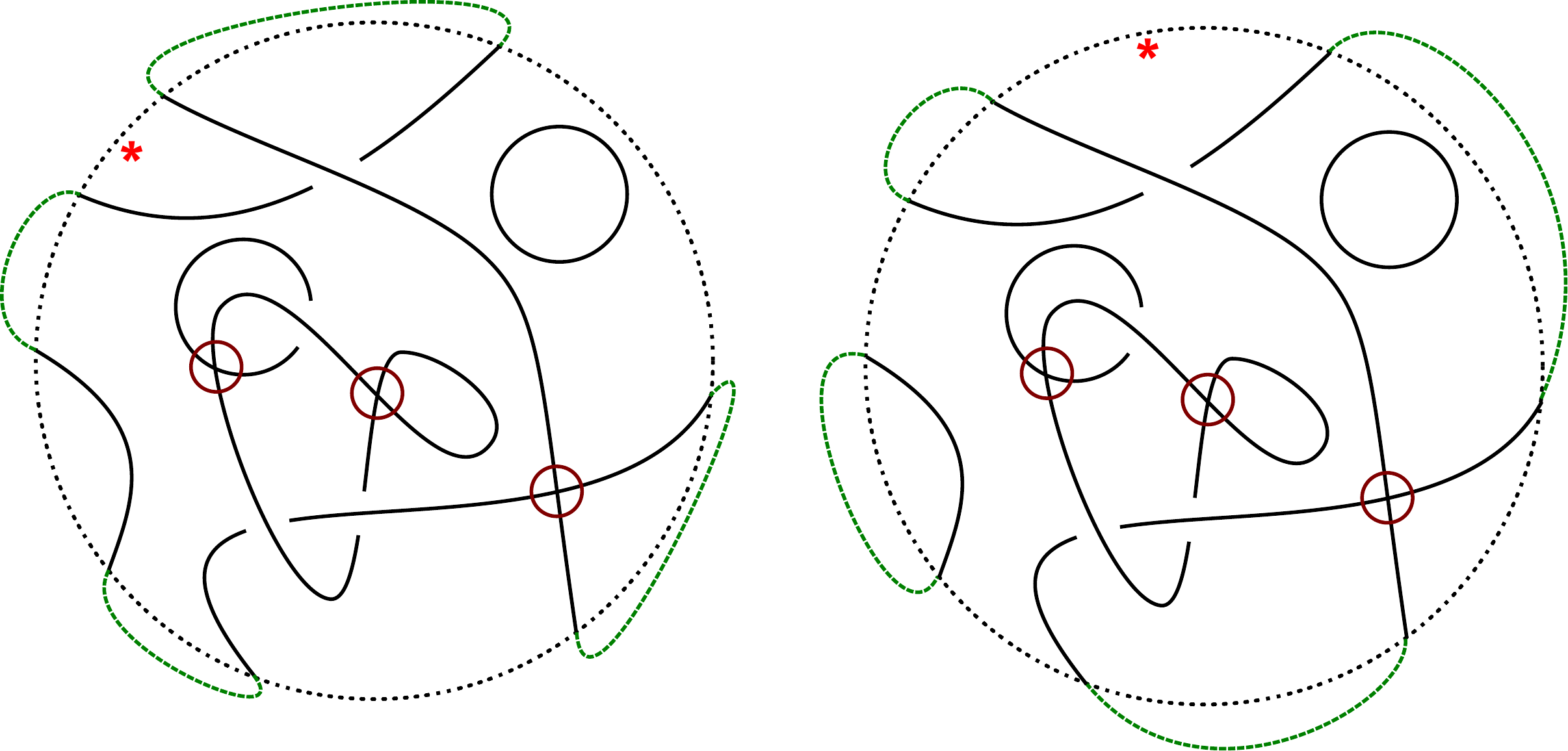}
  \caption{A v-tangle and the two different closures.}
  \label{figure1-1}
\end{figure}
\end{defin}

The notions of an \textit{oriented} virtual tangle diagram and of an \textit{oriented} virtual tangle are defined analogue. The latter modulo \textit{oriented} generalised Reidemeister moves and boundary preserving isotopies. From now on every v-tangle (diagram) should be oriented. But we repress this notion to avoid confusion with the (more important) notions of oriented resolutions and orientations for circuit diagrams (see Section \ref{circuit}).

Moreover, a \textit{decorated} v-tangle diagram is a v-tangle diagram with orientations for all of its circles/strings. 

We discuss the category of \textit{open cobordisms with boundary decorations}. It is almost the same as in \cite{tub}, but the corresponding cobordisms could be open, i.e. they could have vertical boundary components, and are decorated with an extra information, i.e. a number in the set $\{0,+1,-1\}$. We picture the number $0$ as a bolt.  

If the reader is unfamiliar with the notions and relations of cobordisms we suggest to check for example the book of J.~Kock \cite{ko}.

\begin{defin}[The category of open cobordisms with boundary decorations]\label{defin-category}
Let $k\in\nat$ and let $R$ be a commutative and unital ring.

Our category should be $R-$pre-additive. The symbol $\amalg$ should denote the disjoint union (the coproduct in our category).

\textbf{The objects:}

The \textit{objects} \textit{$\Ob(\ucob_R(k)$}) are numbered v-tangles diagrams with $k$ boundary points without classical crossings. We denote the objects as $\mathcal O=\coprod_{i\in I}\mathcal O_i$. Here $\mathcal O_i$ are the v-circles or v-strings and $I$ is a finite, ordered index set.

The objects of the category should be unique up to \textit{boundary preserving planar isotopies} of planar graphs.

\textbf{The generators:}

The \textit{generators} of $\Mor(\ucob_R(k))$ are the cobordisms from Figure \ref{figure1-2}. The cobordisms pictured are all between c-circles or c-strings. 

\begin{figure}[ht]
  \centering
     \includegraphics[scale=0.5]{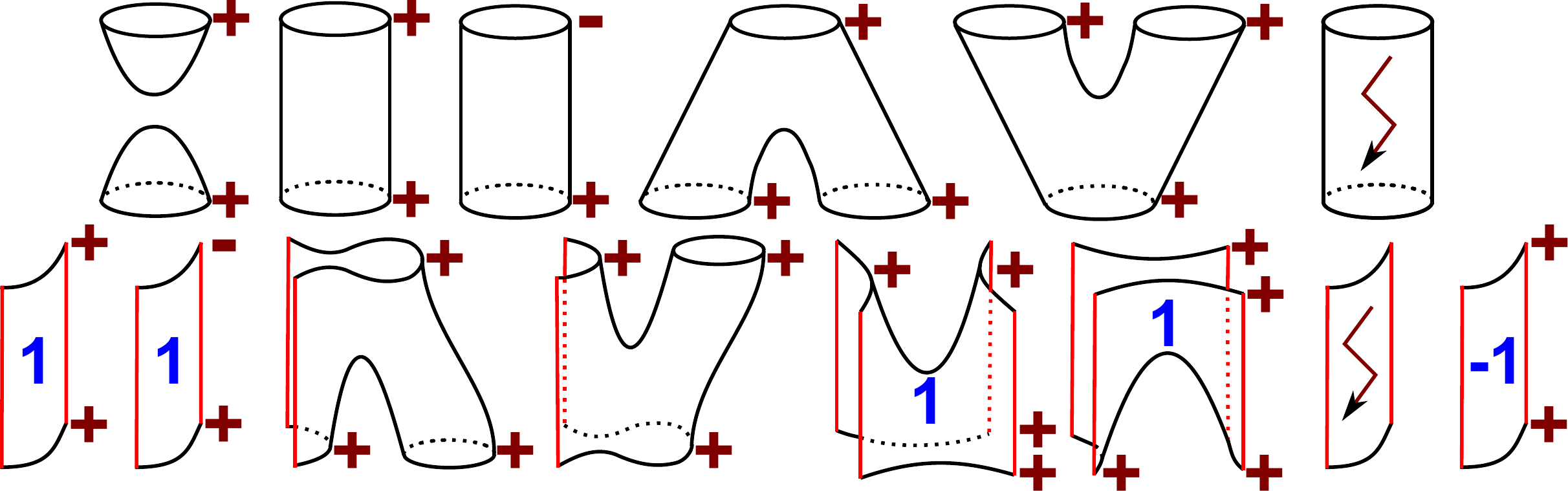}
  \caption{The generators for the set of morphisms. The cobordism on the upper right is the M\"obius cobordism, i.e. a two times punctured projective plane.}
  \label{figure1-2}
\end{figure}

Every generator has a decoration from the set $\{0,+1,-1\}$. We call this decoration the \textit{indicator} of the cobordisms. If no indicator is pictures it should be $+1$.

Every generator with a decoration $\{+1,-1\}$ has extra decorations from the set $\{+,-\}$ at every horizontal boundary component. We call these decorations the \textit{gluing numbers} of the cobordism.

We consider these cobordisms up to boundary preserving homeomorphisms (as abstract surfaces). Hence, between circles or strings with v-crossings the generators are the same up to boundary preserving homeomorphisms immersed into $D^2\times[-1,1]$.

The vertical boundary components are pictured in red.

We denote the different generators (from left to right; top row first) $\iota_+$ and $\varepsilon^+$, $\Id^+_+$ and $\Phi^-_+$, $\Delta^+_{++}$, $m^{++}_+$ and $\theta$, $\Id(1)^+_+$ and $\Phi(1)^-_+$, $S^{+}_{++}$ and $S^{++}_{+}$, $S(1)^{++}_{++}$, $\theta$ and $\Id(-1)^+_+$.

\textbf{The morphisms:}

The \textit{morphisms} \textit{$\Mor(\ucob_R(k))$} are cobordisms between the objects in the following way. First we identify the collection of numbered v-circles or v-strings with circles or strings immersed into $D^2$.

Given two objects $\mathcal O_1,\mathcal O_2$ with $k_1,k_2\in\nat$ numbered v-circles or v-strings, then a morphism $\mathcal C\colon\mathcal O_1\to\mathcal O_2$ is a surface immersed in $D^2\times[-1,1]$ whose non vertical boundary lies only in $D^2\times\{-1,1\}$ and is the disjoint union of the $k_1$ numbered v-circles or v-strings from $\mathcal O_1$ in $D^2\times\{1\}$ and the disjoint union of the $k_2$ numbered v-circles or v-strings from $\mathcal O_2$ in $D^2\times\{-1\}$.

The morphisms should be generated (as abstract surfaces) by the generators from above (see Figure \ref{figure1-2}).

\textbf{The decorations:}

Every morphisms has an \textit{indicator} from the set $\{0,+1,-1\}$.

Moreover, every morphism $\mathcal C\colon\mathcal O_1\to\mathcal O_2$ in $\Mor(\ucob_R(k))$ is a cobordism between the numbered v-circles or v-strings of $\mathcal O_1$ and $\mathcal O_2$. Let us say that the v-circles or v-strings of $\mathcal O_1$ are numbered $i\in\{1,\dots,l_1\}$ and the v-circles or v-strings from $\mathcal O_2$ are numbered for $i\in\{l_1,\dots,l_2\}$.

Every cobordism with $+1,-1$ as an indicator should have a decoration on the $i$-th boundary circle. This decoration is an element of the set $\{+,-\}$. We call this decoration of the $i$-th boundary component the \textit{$i$-th gluing number} of the cobordism.

Hence, the morphisms of the category are pairs $(\mathcal C,w)$. Here $\mathcal C\colon\mathcal O_1\to\mathcal O_2$ is a cobordism from $\mathcal O_1$ to $\mathcal O_2$ immersed in $D^2\times[-1,1]$ and $w$ is a string of length $l_2$ in such a way that the $i$-th letter of $w$ is the $i$-th gluing number of the cobordism and the last latter is the indicator or $w=0$ if the cobordism has $0$ as indicator.

\textbf{Short hand notation:}

We denote a morphism $\mathcal C$ which is a connected surfaces as $\mathcal C^{u}_{l}(\mathrm{in})$. Here $u,l$ are words in the alphabet $\{+,-\}$ in such a way that the $i-$th character of $u$ (of $l$) is the gluing number of the $i-$th circle of the upper (of the lower) boundary. The number $\mathrm{in}$ should be the indicator.

The construction above ensures that this notation is always possible. Therefore we denote an arbitrary morphism $(\mathcal C,w)$ as
\[
\mathcal C(\pm 1)=\mathcal C^{u_1}_{l_1}\amalg\cdots\amalg\mathcal C^{u_{k}}_{l_{k}}(\pm 1).
\]
Here $\mathcal C^{u_i}_{l_i}$ are its connected components and $u_i,l_i$ are words in $\{+,-\}$.

For a morphism with $0$ as indicator we do not need any boundary decorations. With a slight abuse of notation, we denote all these cobordisms like the non orientable cobordisms as $\theta$.

\textbf{The relations:}
\begin{figure}[p]
\begin{minipage}[c]{6,9cm}
	\centering
	\includegraphics[scale=0.25]{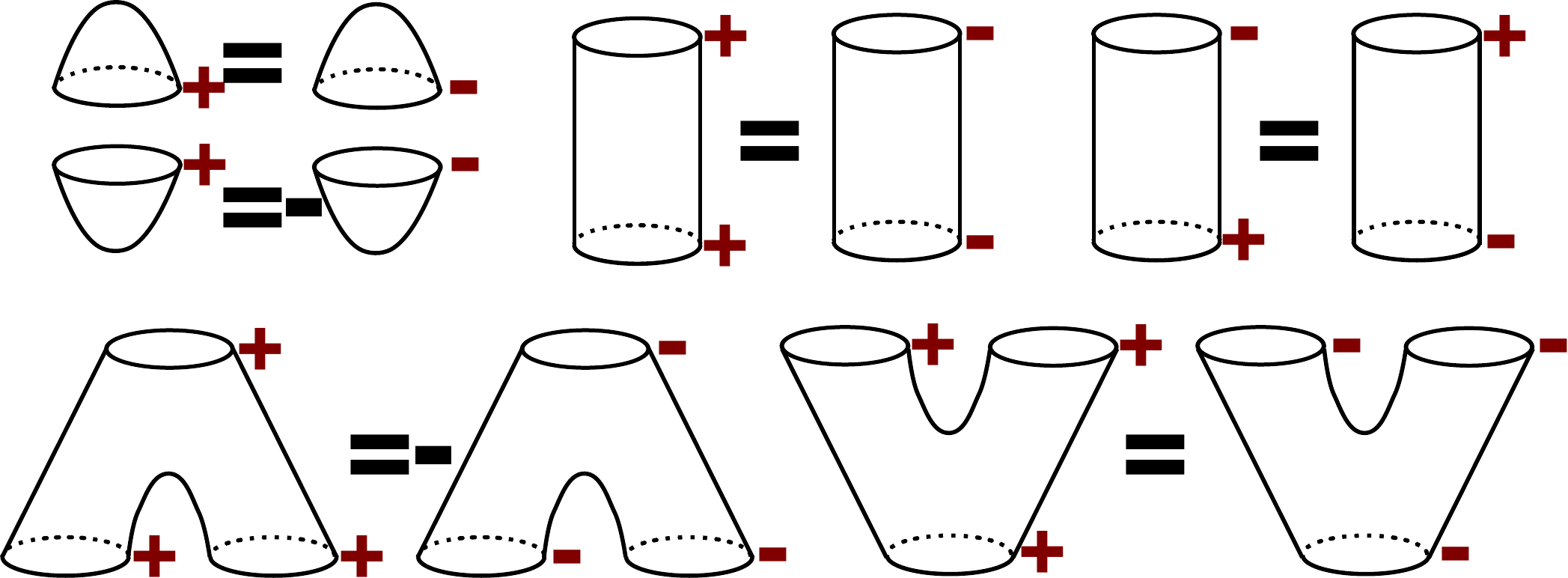}
	\caption{(1) The first combinatorial relations.}
	\label{figure1-3}
\end{minipage}
\begin{minipage}[c]{6,9cm}
	\centering
	\includegraphics[scale=0.25]{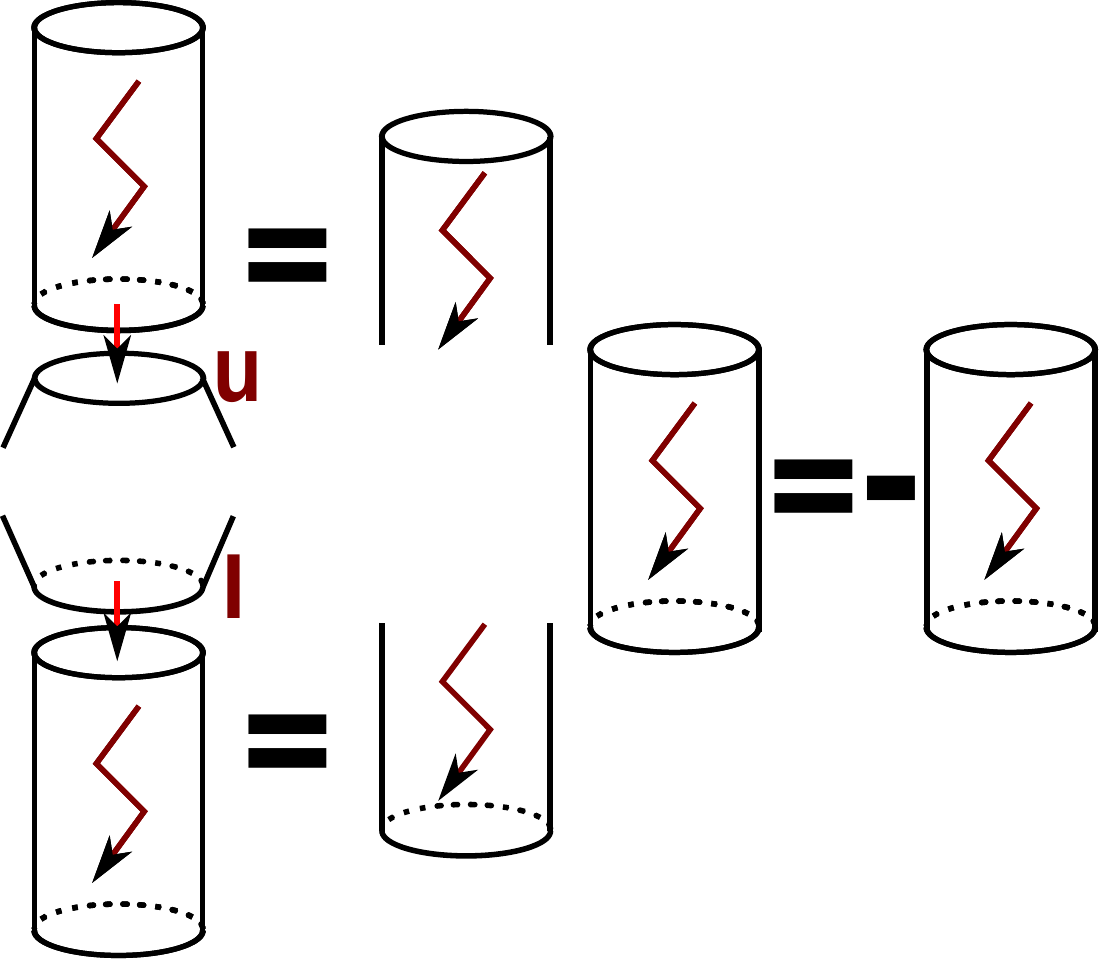}
	\caption{(2) The second combinatorial relations.}
	\label{figure1-4}
\end{minipage}
\begin{minipage}[c]{6,9cm}
	\centering
	\includegraphics[scale=0.25]{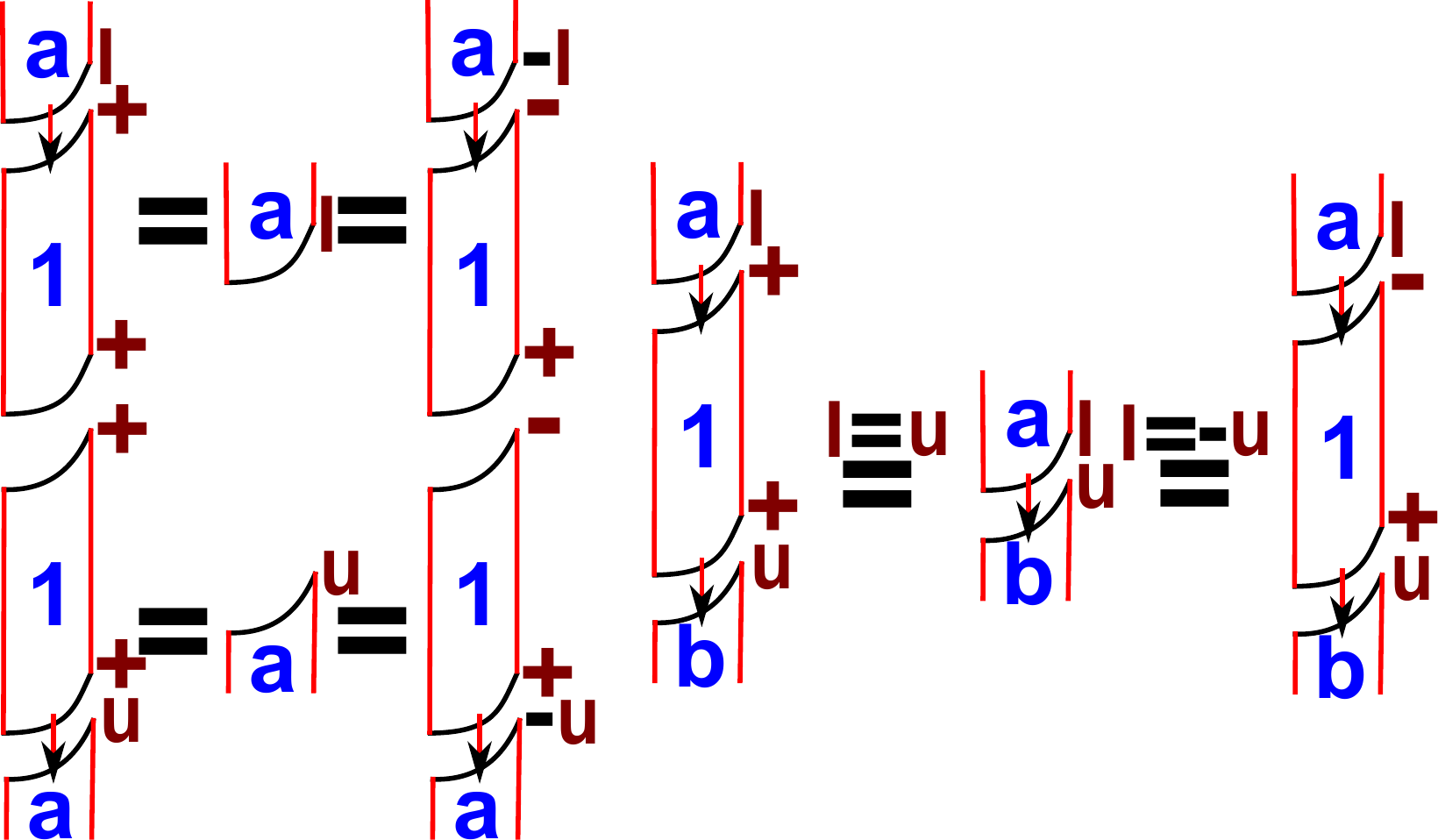}
	\caption{(3) The third combinatorial relations.}
	\label{figure1-5}
\end{minipage}
\begin{minipage}[c]{6,9cm}
	\centering
	\includegraphics[scale=0.25]{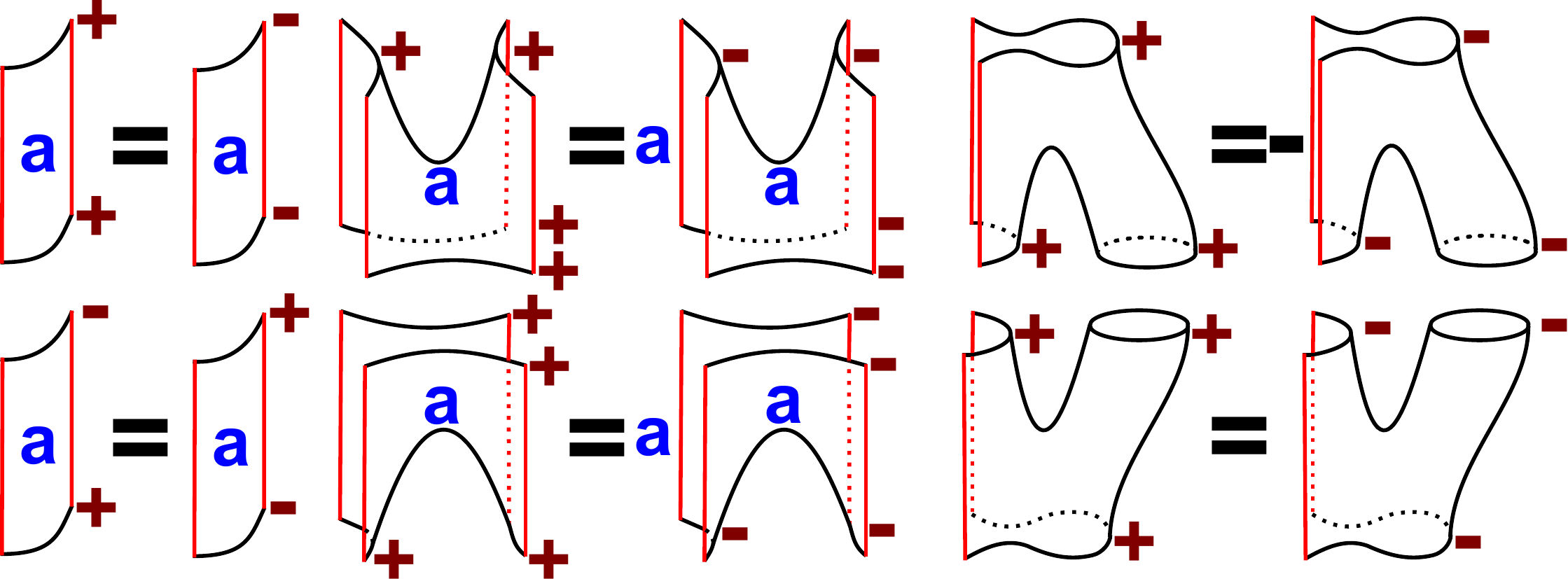}
	\caption{(4) The fourth combinatorial relations.}
	\label{figure1-6}
\end{minipage}
\begin{minipage}[c]{6,9cm}
	\centering
	\includegraphics[scale=0.25]{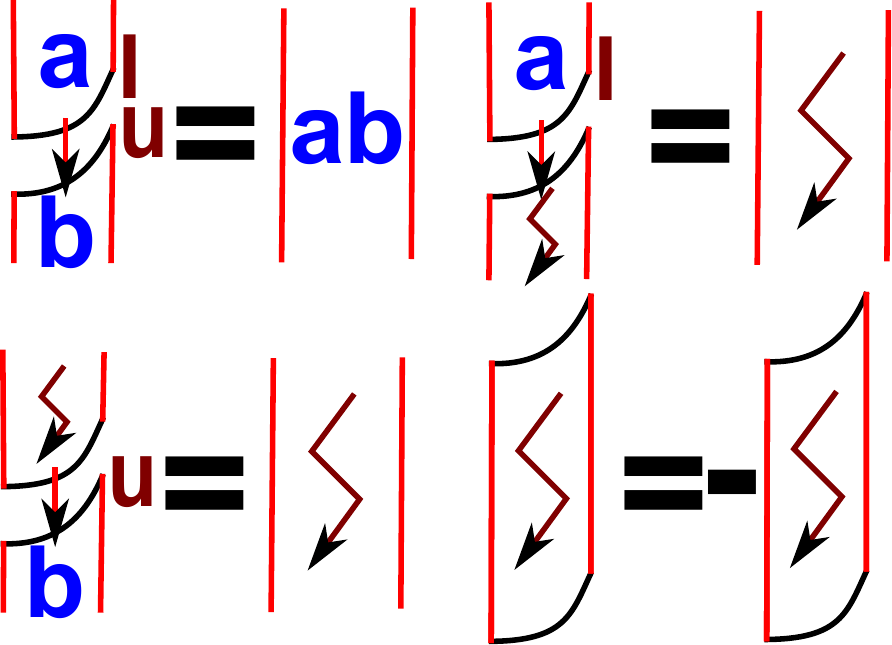}
	\caption{(5) The fifth combinatorial relations.}
	\label{figure1-7}
\end{minipage}
\begin{minipage}[c]{6,9cm}
	\centering
	\includegraphics[scale=0.25]{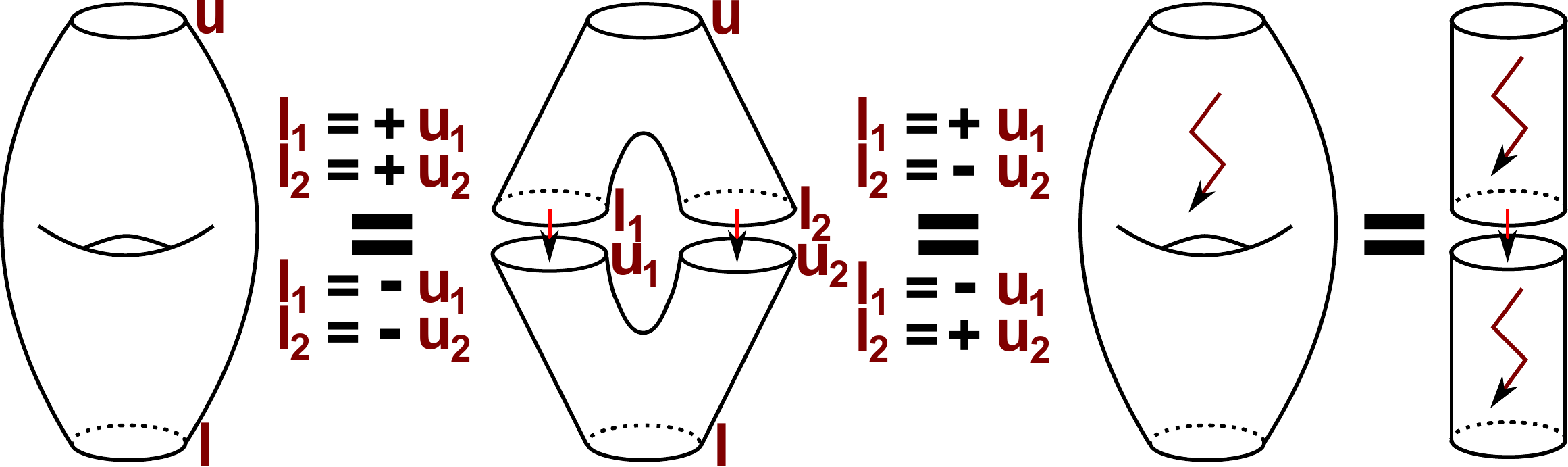}
	\caption{(6) The torus and M\"obius relations.}
	\label{figure1-8}
\end{minipage}
\begin{minipage}[c]{13,9cm}
	\centering
	\includegraphics[scale=0.25]{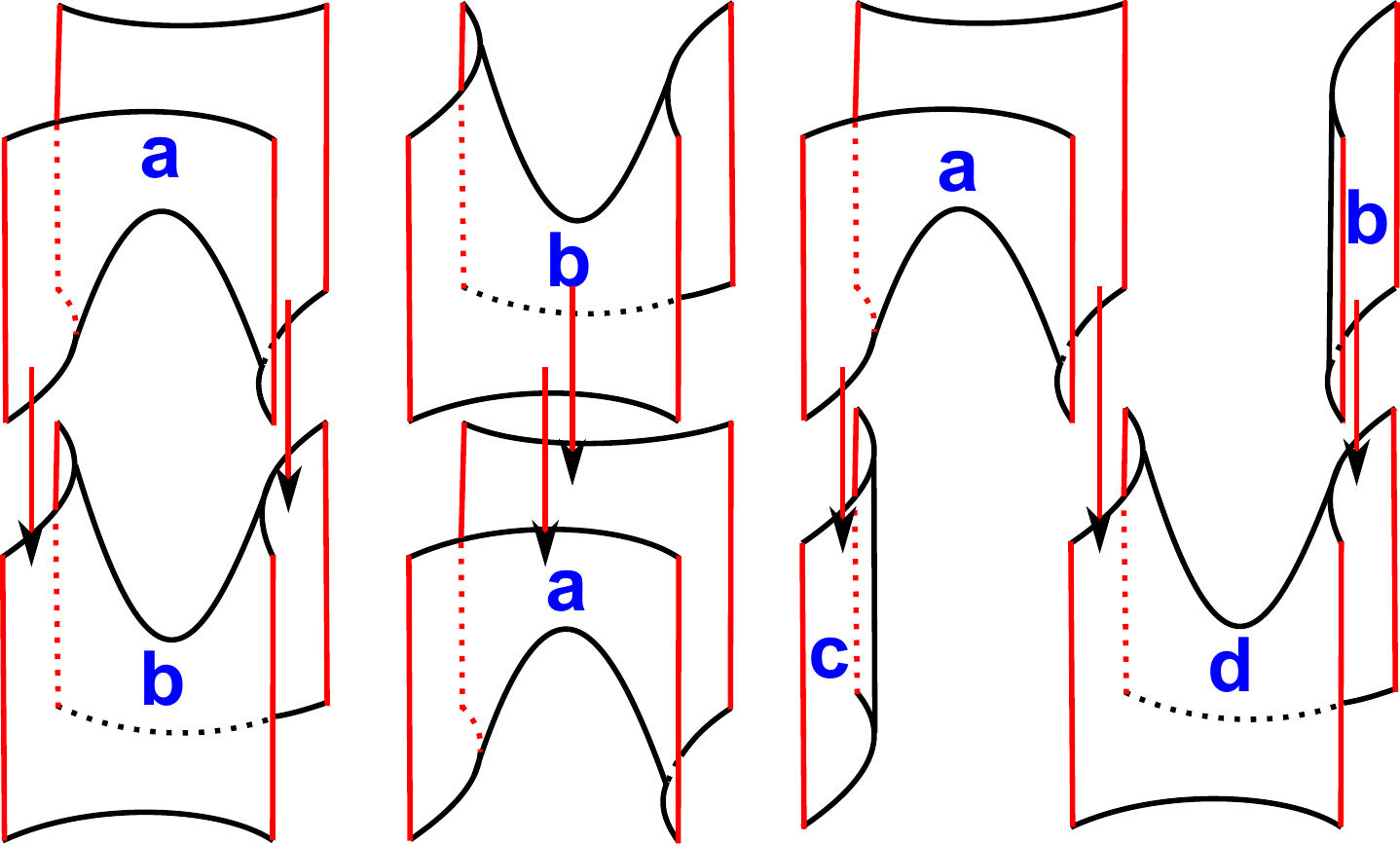}
	\caption{(7) The gluing in these three cases is described by the gluing numbers, i.e. if there is an odd number of different gluing numbers then the indicator should be $0$ and just the product otherwise.}
	\label{figure1-9}
\end{minipage}
\end{figure}
There are different relations for the cobordisms, namely \textit{topological relations} and \textit{combinatorial relations}. The latter relations are describe by the gluing numbers and indicators of the cobordisms and the gluing of the cobordisms. The topological relations are not pictured but it should be clear how they should work. See for example J.~Kock in \cite{ko} and our paper in \cite{tub}.

The monoidal structure should be induced by the disjoint union $\amalg$. The category is a semi-strict monoidal category. Moreover the category is symmetric. Hence, by Mac Lane's coherence theorem (see \cite{mac}) we can assume that the category is a strict, symmetric, monoidal category.

We define the full category $\ucob_R(\omega)$ to be the category whose objects are $\bigcup_{k\in\nat}\Ob(\ucob_R(k))$ and whose morphisms are $\bigcup_{k\in\nat}\Mor(\ucob_R(k))$.
\end{defin}
Note the following collection of formulas that follow from the relations. Recall that $\Phi^-_+$ and $\Phi(1)^-_+$ change the decorations and that $\theta$ and $\Id(-1)^+_+$ change the indicators. With a slight abuse of notation we repress to write tensors if it is not neccessary, i.e. for the indicator changes. Moreover, since $\Phi^-_+$ and $\Phi(1)^-_+$ satiesfy similar formulas, we only write down the equations for $\Phi^-_+$.
\begin{lemm}\label{lemm-calc}
Let $\mathcal O,\mathcal O'$ be two objects in $\ucob_R(k)$. Let $\mathcal C\colon\mathcal O\to\mathcal O'$ be a morphism that is connected, has $\mathrm{in}\in\{0,+1,-1\}$ as an indicator and $u$ and $l$ as decorated boundary strings. Then we have the following identities. We write $\mathcal C=\mathcal C^u_l(\mathrm{in})$ as a short hand notation if the indicators and gluing numbers do not matter.
\begin{itemize}
\item[(a)] $\mathcal C\circ\Id(-1)^+_+=\Id(-1)^+_+\circ\mathcal C$ (indicator changes commute).
\item[(b)] $\mathcal C\circ\theta=\theta\circ\mathcal C$ ($\theta$ commutes).
\item[(c)] $\mathcal C(0)\circ\Phi^-_+=\Phi^-_+\circ\mathcal C(0)$ (first decoration commutation relation).
\item[(d)] Let $u',l'$ denote the decoration change at the corresponding positions of the words $u,l$. Then we have \begin{align*}
\mathcal C(\pm 1)^u_l\circ\Id^+_+\otimes\dots\otimes\Phi^-_+\otimes\dots\otimes\Id^+_+ &=\mathcal C(\pm 1)^{u'}_l=\pm\mathcal C(\pm 1)^u_{l'}\\ &=\pm\Phi^-_+\otimes\dots\otimes\Id^+_+\otimes\dots\otimes\Phi^-_+\circ\mathcal C(\pm 1)^u_l
\end{align*}(second decoration commutation relation).
\end{itemize}
\end{lemm}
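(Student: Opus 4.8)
The plan is to deduce all four identities directly from the defining relations of $\ucob_R(k)$ --- the topological relations together with the combinatorial relations of Figures \ref{figure1-3}--\ref{figure1-9} --- by reducing each to a finite check on the generators of Figure \ref{figure1-2} and then propagating through a decomposition of $\mathcal C$ into generators. Two preliminary remarks organise the bookkeeping: a generator supported on one collection of v-circles/v-strings commutes with any of $\Id(-1)^+_+$, $\theta$, $\Phi^-_+$ supported on a disjoint collection, so only the ``serial'' situation needs attention; and the indicator is multiplicative under composition, with $0$ absorbing (cf. Figure \ref{figure1-9} and \cite{tub}).

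First the easy parts. For (a): $\Id(-1)^+_+$ has trivial underlying surface (a union of cylinders) and carries only the datum ``negate the indicator''; gluing it to $\mathcal C$ on either side changes neither the homeomorphism type of the surface nor the gluing-number words $u,l$, while by the indicator rule both composites have indicator $-\mathrm{in}$, so they coincide. Part (b) is the same observation with $\theta$: composing $\theta$ on either side forces the indicator to $0$, and for a \emph{connected} $\mathcal C$ the classification of compact surfaces shows the resulting surface does not depend on which boundary component $\theta$ is attached to; by the M\"obius relations of Figure \ref{figure1-8} and the convention that every indicator-$0$ cobordism is written $\theta$, both composites equal $\theta$. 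Part (c) is the degenerate case $\mathrm{in}=0$ of (d): then $\mathcal C(0)$ has no gluing numbers, $\Phi^-_+$ is a cylinder of indicator $+1$, and composing it on either side changes neither the surface nor the indicator $0$ and there is nothing to change among the (empty) decorations; both sides equal $\mathcal C(0)$.

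Part (d) carries the content. First I would record, for each generator $g$ and each of its horizontal boundary circles $c$, the effect of precomposing $\Phi^-_+$ at $c$: this is precisely one of the relations drawn in Figures \ref{figure1-3} and \ref{figure1-4} (and \ref{figure1-5}--\ref{figure1-7} for the string versions), namely that it equals postcomposing $\Phi^-_+$ at the boundary circle(s) of $g$ ``corresponding'' to $c$, up to the global sign given by the type of $g$ --- namely $+1$ for $m$, $\Id^+_+$, $\Phi^-_+$, then $-1$ for $\Delta$, and $0$ for $\theta$ (for $\Delta$ a single $\Phi^-_+$ on the input fans out to a $\Phi^-_+$ on each output, which is why the last expression in (d) carries a tensor product of $\Phi^-_+$'s). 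Then I would fix a generator decomposition of $\mathcal C$ and transport $\Phi^-_+$ from the upper boundary of $\mathcal C$ to the lower one generator at a time, applying these relations; at each step the gluing-number word is updated at exactly the crossed circle(s) and the accumulated sign is the product of the types of the crossed generators. Since $\mathcal C$ is connected, this product equals $\mathrm{in}$ (well-definedness and multiplicativity of the indicator are part of the construction; cf. \cite{tub}), and reading off the intermediate and final configurations gives, in order, $\mathcal C(\pm1)^{u'}_l$, then $\pm\,\mathcal C(\pm1)^u_{l'}$, then the form $\pm\,\Phi^-_+\otimes\cdots\otimes\Id^+_+\otimes\cdots\otimes\Phi^-_+\circ\mathcal C(\pm1)^u_l$ --- three ways of recording the same normal form. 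The hard part will be exactly this sign bookkeeping in (d): making precise that ``push $\Phi^-_+$ through one generator at a time'' is independent of the chosen decomposition (which is ensured by consistency of the relations defining the category) and confirming that the signs accumulate to the indicator of $\mathcal C$ and not to some other multiplicative invariant; (a)--(c), by contrast, are immediate once one notes that $\Id(-1)^+_+$ and $\Phi^-_+$ have trivial underlying surface.
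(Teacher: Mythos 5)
Your approach is the same one the paper takes: the paper's entire proof is the single sentence ``Everything follows by a straightforward usage of the relations in Definition~\ref{defin-category}'', and you are filling in what that usage amounts to. Your arguments for (a), (b) and (c) are correct and well organised: $\Id(-1)^+_+$ and $\Phi^-_+$ have trivial underlying surfaces and so the only thing to check is the indicator and (empty) gluing-number data, while for (b) connectedness plus the classification of compact surfaces with boundary shows the composite does not depend on where $\theta$ is attached, and the convention that all indicator-$0$ morphisms are written $\theta$ closes the loop.

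For (d), however, the step you flag as ``the hard part'' is genuinely a gap as written, and the phrasing ``the accumulated sign is the product of the types of the crossed generators'' is not correct once $\mathcal C$ has genus. Take $\mathcal C=m^{++}_+\circ\Delta^+_{++}$, which is connected with indicator $(-1)(+1)=-1$. Pushing $\Phi^-_+$ in at the single upper circle, it fans out through $\Delta$ into $\Phi^-_+\otimes\Phi^-_+$ (sign $-1$) and then both copies feed into $m$, pick up $(+1)(+1)$, and \emph{cancel} at the unique lower circle, so the net result is $\mathcal C^-_+=-\mathcal C^+_+$ --- there is no residual $\Phi^-_+$ on the output. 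Your argument would report the accumulated sign $(-1)(+1)(+1)=\mathrm{in}$ (true) but would also produce a $\Phi^-_+$ at the lower boundary (false). So ``corresponding positions'' in $l'$ must be read as the boundary circles reached an odd number of times, and the bookkeeping has to account for generators crossed more than once and for cancellation of paired $\Phi^-_+$'s via $\Phi^-_+\circ\Phi^-_+=\Id^+_+$. You should either restrict attention to genus-zero $\mathcal C$ (which is all the paper ever uses Lemma~\ref{lemm-calc} for: the differentials are single saddles and the chain maps are built from them), or track the parity of passages through each generator explicitly and verify the cancellation pattern against the relations of Figures~\ref{figure1-3}--\ref{figure1-9}, ideally testing the $m\circ\Delta$ case and the $\theta$-creating case (odd number of gluing-number mismatches) directly.
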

\begin{proof}
Everything follows by a straightforeward usage of the relations in Definition \ref{defin-category} above. 
\end{proof}
Next we define some short hand notations for the different categories we use. For any pre-additive category $\mathfrak C$ we define $\mat(\mathfrak C)$ as the \textit{category of formal matrices} and $\kom(\mathfrak C)$ as the \textit{category of formal chain complexes} over $\mathfrak C$. If the reader is unfamiliar with the notations we refer to \cite{bn1}.

\begin{defin}\label{defin-category2}
We call $\ukob_R(k)$ the category $\kom(\mat(\ucob_R(k)))$. Here our objects are formal chain complexes of formal direct sums in the category of (possible non orientable) open cobordisms with boundary decorations. Then we define $\ukob_R(k)_h$ as the category $\ukob_R(k)$ modulo formal chain homotopy.

Moreover, we define $\ucob_R(k)_l$, which has the same objects as the category $\ucob_R(k)$, but the morphisms are all modulo the local relations from Figure \ref{figure0-2}.

Furthermore we define $\ukob_R(k)_l$ and $\ukob_R(k)_{hl}$ in the obvious sense. The notations $\ucob_R(k)_{(h)(l)}$ or $\ukob_R(k)_{(h)(l)}$ mean that we consider all possible cases, namely with or without a $h$ and with or without a $l$. Moreover, we use the same notions if $k=\omega$.
\end{defin}

We are now able to define the \textit{geometric complex of a v-tangle diagram} $T^k_D$. For this construction we use our notations for the \textit{saddle decorations} and \textit{saddle signs} of v-link diagrams $L_D$ from section two in \cite{tub}. The reader unfamiliar with the notations may check the corresponding definitions. We only repeat these two important constructions in a very brief summary.

The only new notion in the definition below is the spreading of the \textit{indicator of the saddle}.

\begin{defin}[The saddle decorations]\label{defin-saddlestructure}
Let $L_D$ be a v-link diagram. Moreover, let $S\colon\gamma_a\rightarrow\gamma_b$ be a saddle between the numbered, orientated resolutions $\gamma_a,\gamma_b$.

Every saddle can be viewed as an formal symbol $\smoothing\rightarrow\hsmoothing$ together with a formal local orientation of these strings which is equal to the orientations of $\gamma_a,\gamma_b$. We say the saddle of the form \du$\rightarrow$\ril\, is of \textit{standard form}.

Moreover, choose a position for the \emph{x-marker} at any crossing and a numbering of the v-circles in the resolutions. Recall that the x-marker is just a choice in which direction one rotates a saddle $S\colon\smoothing\to\hsmoothing$ until one can compare it to the saddle in standard form. The complex will be well-defined for any choice and different choices give chain isomorphic complexes (section two in \cite{tub}).

For every orientable saddle the \textit{saddle sign} is the product of two numbers, both either $1$ or $-1$. If the x-marker is on the lower numbered saddle, then the first number should be $1$ and $-1$ otherwise. Moreover, if the number of v-circles with a higher number then the x-marked v-circle is odd, then the second number should be $1$ and $-1$ otherwise. The reader should check that this definition corresponds to the one in section two of \cite{tub}.  

Now we spread the \textit{saddle decorations} as follows:
\begin{itemize}
\item[(i)] If the saddle is non orientable, then we do not need any decorations.
\item[(ii)] If the saddle is orientable, then we compose the standard saddles $m^{++}_+$ or $\Delta^+_{++}$ with $\Phi^-_+$ for every not standard oriented string.
\end{itemize}
The saddle should have a formal minus sign iff the saddle sign is negative. Every orientable surface should carry an indicator $+1$ iff the number of upper boundary components of the saddle is two and a $-1$ iff the number is one. Every non orientable saddle gets a zero as indicator.

Everything together, i.e. boundary decorations, saddle sign and indicator, is denoted as the \textit{saddle decoration} of $S$. Beware that many choices are involved. But the do not change the complex up to chain isotopies (Lemma \ref{lemma-everythingfine}).

Now let $T^k_D$ be a tangle diagram with a *-marker on the boundary and let $\mathrm{Cl}(T^k_D)$ be the closure of the diagram. The \textit{saddle decoration} of the saddles of $T^k_D$ should be the ones induced by the saddle decorations of the closure.
\end{defin}

\begin{defin}[Geometric complex]\label{defin-geocomplex}
For a v-tangle diagram $T^k_D$ with a *-marker on the boundary and with $n$ ordered crossings we define \textit{the geometric complex} $\bn{T^k_D}$ as follows:
\begin{itemize}
\item For $i\in\{0,\dots,n\}$ the $i-n_-$ \textit{chain module} is the formal direct sum of all oriented resolutions $\gamma_a$ of length $i$.
\item There are only morphisms between the chain modules of length $i$ and $i+1$.
\item If two words $a,a'$ differ only in exactly one letter and $a_r=0$ and $a'_r=1$, then there is a morphism between $\gamma_a$ and $\gamma_{a'}$. Otherwise all morphisms between components of length $i$ and $i+1$ are zero.
\item This morphism is a \textit{saddle} between $\gamma_a$ and $\gamma_{a'}$.
\item The saddles should carry the \textit{saddle decorations} from Definition \ref{defin-saddlestructure}.
\end{itemize}
Note that it is not clear at this point why we can choose the numbering of the crossings, the numbering of the v-circles and the orientation of the resolutions of the closure. Furthermore it is not clear why this complex is a well-defined chain complex.

We show in Lemma \ref{lemma-everythingfine} that the complex is independent of these choices, i.e. if $\bn{L_D}_1$ and $\bn{L_D}_2$ are well-defined chain complexes with different choices, then they are equal up to chain isomorphisms. The same lemma ensures that the complex is a well-defined chain complex.

Beware that the position of the *-marker is important for v-tangle diagrams. But Theorem \ref{theo-marker} ensures that the position is not important for classical v-tangles and v-links.

If it does not matter which of the possible two different chain complexes is which, i.e. it is just important that they could be different, then we denote them as $\bn{T^k_D}^*$ and $\bn{T^k_D}_*$ for a v-tangle diagram $T^k_D$ without a chosen marker position. 
 
For an example see Figure \ref{figure1-10}. This figure shows the virtual Khovanov complex of a v-tangle diagram with two different *-marker positions. The vertical arrow between them indicates that they are chain isomorphic.
\begin{figure}[ht]
  \centering
     \includegraphics[scale=0.4]{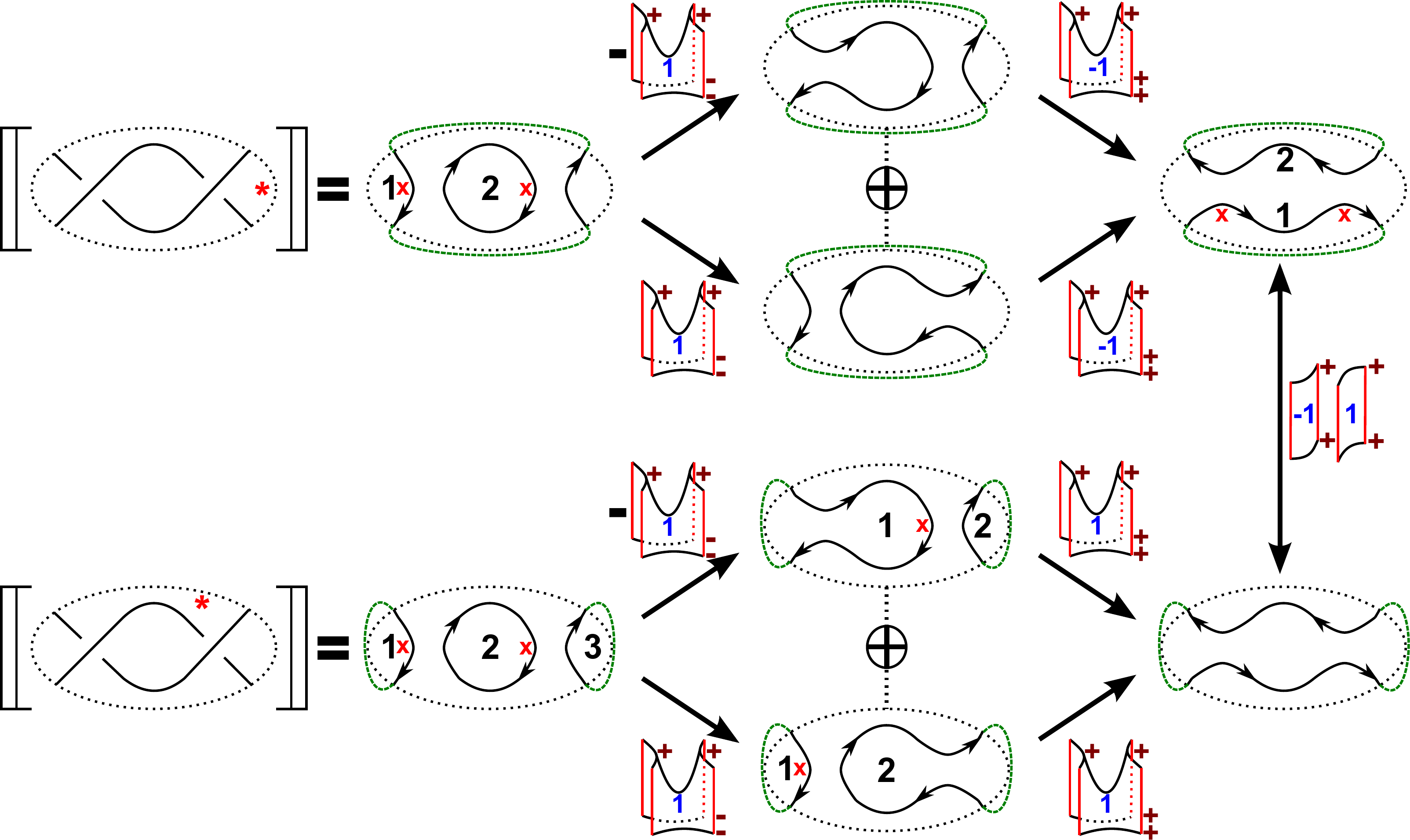}
  \caption{The complex of the same v-tangles with different *-marker positions. The two complexes are isomorphic.}
  \label{figure1-10}
\end{figure}
\end{defin}

\begin{lemm}\label{lemma-everythingfine}
Let $T^k_D$ be a v-tangle diagram with a *-marker and let $\bn{T^k_D}_1$ be its geometric complex from Definition \ref{defin-geocomplex} with arbitrary orientations for the resolutions of the closure. Let $\bn{T^k_D}_2$ be the complex with the same orientations for the resolutions except for one circle $c$ in one resolution $\gamma_a$. If a face $F_1$ from $\bn{T^k_D}_1$ is anti-commutative, then the corresponding face $F_2$ from $\bn{T^k_D}_2$ is also anti-commutative.

Moreover, if $\bn{T^k_D}_1$ is a well-defined chain complex, then it is isomorphic to $\bn{T^k_D}_2$, which is also a well-defined chain complex.

The same statement is true if the difference between the two complexes is the numbering of the crossings, the choice of the rotation for the calculation of the saddle signs or the fixed numbering of the v-circles of the closure. Moreover, the same is true for any rotations/isotopies of the v-tangle diagram.

Hence, we can speak of \textit{the} geometric complex $\bn{T^k_D}$ of the v-tangle diagram with a *-marker. The complex is a well-defined chain complex.
\end{lemm}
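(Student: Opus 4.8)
The plan is to reduce the statement to a handful of elementary modifications and to realise each of them by an explicit isomorphism of chain complexes, the only real input being the commutation relations collected in Lemma~\ref{lemm-calc}. First I would note that the four kinds of change in the statement are generated by ``adjacent'' moves: a change of the numbering of the crossings, a change of the numbering of the v-circles of the closure, a change of the rotation at one crossing, or a boundary-preserving isotopy of $T^k_D$ can each be written as a composition of a transposition of two consecutive crossing labels, a transposition of two consecutive v-circle labels, a flip of a single x-marker, or a single isotopy. A transposition of two crossing labels merely permutes the coordinate axes of the Khovanov cube, hence permutes the direct summands of every chain module together with the saddles between them; this is visibly an isomorphism in $\ukob_R(k)$ and it carries anti-commutative faces to anti-commutative faces. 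A boundary-preserving isotopy is absorbed into the identification of isotopic objects and cobordisms inside $\ucob_R(k)$. Thus the real content is the flip of the orientation of a single circle of a single resolution (which changes the saddle \emph{decorations}) together with the transposition of two v-circle labels and the flip of one x-marker (which change the saddle \emph{signs}, and possibly relabel the upper/lower boundary of a few saddles).

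For the main case, flip the orientation of one circle $c$ in one resolution $\gamma_a$ of the closure and keep all other orientations. Define $\Psi=\bigoplus_{b}\Psi_b\colon\bn{T^k_D}_1\to\bn{T^k_D}_2$ by $\Psi_b=\Id$ for $b\neq a$, and let $\Psi_a$ be $\Id^+_+$ on every circle or string of $\gamma_a$ except $c$, on which it is the decoration change $\Phi^-_+$ (or $\Phi(1)^-_+$ if $c$ is a string). Each $\Psi_b$ is invertible since decoration changes are, so $\Psi$ is an isomorphism of the underlying chain modules. Now compare the two differentials saddle by saddle. A saddle not touching $\gamma_a$ is literally unchanged. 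A $\theta$-saddle touching $c$ carries no gluing number on $c$, hence is unchanged, and by Lemma~\ref{lemm-calc}(b) together with the M\"obius relations it commutes with $\Psi_a$. A saddle $S\colon\gamma_a\to\gamma_{a'}$ has, in $\bn{T^k_D}_2$, exactly its $c$-gluing number on the upper boundary flipped, so by Lemma~\ref{lemm-calc}(c),(d) the new saddle equals $S\circ\Psi_a^{-1}$ up to a sign $\sigma(S)\in\{\pm1\}$ depending only on the type of $S$ (and on whether the flipped boundary component is an upper or a lower one); dually a saddle ending at $\gamma_a$ becomes $\Psi_a\circ S$ up to the corresponding sign, and indicator changes $\Id(-1)^+_+$ are slid past using Lemma~\ref{lemm-calc}(a). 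Hence $d_2$ and $\Psi d_1\Psi^{-1}$ agree entry by entry up to these type-signs, and everything comes down to showing that the signs cancel around each face.

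That cancellation is precisely the first assertion of the statement. Fix a face spanned by two crossings $p,q$; the resolution $\gamma_a$ occurs in it as at most one corner. If $\gamma_a$ is the source corner, the two saddles leaving it (one at $p$, one at $q$) are each multiplied by their type-sign and post-composed with $\Psi_a^{-1}$, while the two opposite saddles are untouched; so the face stays anti-commutative precisely when the two type-signs coincide, and they do, because at each of $p,q$ the circle $c$ is either idle, in which case the relevant component of the saddle is an identity cylinder, or genuinely split or merged, and whether $c$ is idle/split/merged at $p$ and at $q$ (hence which type-sign occurs) is dictated by the fixed local pictures of the diagram, not by the chosen orientation. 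The cases where $\gamma_a$ is a target or a middle corner of the face are symmetric, again using Lemma~\ref{lemm-calc}(a),(b) to commute $\theta$ and indicator changes past. Running through the finitely many configurations of $p,q$ (classical of each relative position and the virtual ones) with the sub-cases idle, $m$, $\Delta$, $\theta$ at each of $p,q$ shows that $F_1$ anti-commutative implies $F_2$ anti-commutative; this is the virtual analogue of the sign bookkeeping of Section~2 of \cite{tub}, and it is exactly where the assignment of types $\{+1,-1,0\}$ and the technical sign placement were arranged so that the signs of Lemma~\ref{lemm-calc}(d) cancel around faces. Granting it, $\bn{T^k_D}_1$ being a chain complex forces $d_2^2=0$ face-wise, so $\bn{T^k_D}_2$ is a chain complex, and $\Psi$, rescaled on the summands by the now-cancelling signs, is the desired chain isomorphism. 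The two sign-only modifications are handled in the same way with $\Psi$ replaced by a diagonal $\pm1$-rescaling of summands read off from the definition of the saddle sign, the face check again being a pairwise cancellation of signs; an arbitrary change is a finite composition of these, and since one explicit choice of orientations, numberings and markers is already known to give a well-defined chain complex (from \cite{tub}, the indicator being only formal data on the morphisms), we may speak of \emph{the} geometric complex $\bn{T^k_D}$, well-defined up to chain isomorphism. The main obstacle is exactly this face-wise sign cancellation: it is elementary but genuinely case-heavy, and it is the step that forces the conventions to be chosen as they are.
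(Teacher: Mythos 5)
Your proof is correct in spirit but takes a genuinely different organizational route from the paper's. Where you build the chain isomorphism explicitly — as a composition of elementary moves, each realized by a decoration change $\Phi^-_+$ (or $\Phi(1)^-_+$) or a diagonal $\pm1$-rescaling, with the anti-commutativity of faces checked by a pairwise sign cancellation — the paper proceeds in two much shorter steps: first it invokes the $k=0$ statement verbatim from \cite{tub} (whose content is exactly the reduction of all faces, via virtual Reidemeister moves and virtualisations, to a finite list, followed by a case-by-case check), and then it observes that the open case is a corollary of the closed one, because the saddle decorations of a tangle diagram are \emph{defined} via its closure in Definition~\ref{defin-saddlestructure} and the relations of $\ucob_R(k)$ are set up so that open cobordisms behave like their closures. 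That closure observation is the key economy of the paper's argument and is only implicit in yours: when you say the face check is ``the virtual analogue of the sign bookkeeping of Section~2 of \cite{tub},'' you are silently using the fact that the open-saddle decorations are read off from the closed complex, so that the $k=0$ verification transfers. Making that explicit would tighten your argument. One local slip worth flagging: in the source-corner case you write that the outgoing saddles are ``multiplied by their type-sign,'' but by Lemma~\ref{lemm-calc}(d) a pre-composition with $\Phi^-_+$ flips a \emph{domain} decoration with no sign at all, so that case is actually free; the sign bookkeeping is only non-trivial at target and middle corners, and your justification that the two type-signs coincide (that idle/split/merge is topological) establishes that they are well-defined rather than that they agree — the latter is exactly the case-by-case content you correctly defer to. The trade-off, then: your route is more constructive and makes the chain isomorphism visible (it is effectively the spanning-tree construction used later in Proposition~\ref{prop-nicechainiso}), while the paper's reduction-to-closure route is shorter and avoids re-running the open-case bookkeeping at all.
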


\begin{proof}
For v-tangle diagrams $T^k_D$ with $k=0$ the statement is the same as the corresponding statement in section two in \cite{tub}. Recall that the trick is to reduce all faces through a finite sequence of vRM1, vRM2, vRM3 and mRM moves (see Figure \ref{figure0-1}) and virtualisations (see Figure \ref{figure1-11}) to a finite number of different possible faces. Then one does a case-by-case check.

\begin{figure}[ht]
  \centering
     \includegraphics[scale=0.6]{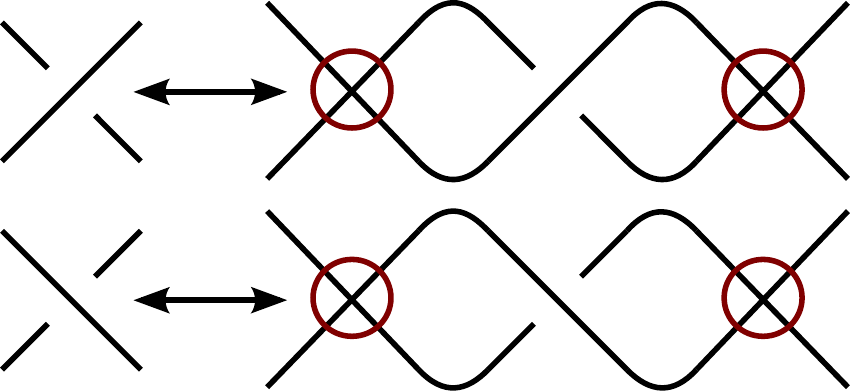}
  \caption{The two moves pictured are called virtualisations.}
  \label{figure1-11}
\end{figure}

Because the saddles in the two chain complexes are topological the same, we only have to worry about the decorations. But the decorations are spread based on the closure of the v-tangle diagram and the relations from Definition \ref{defin-category} are build in such a way that the open cases behave like the closed ones. Hence, we can use the statement for $k=0$ to finish the proof.
\end{proof}

The next theorem is very important but the proof itself is almost equal to the proof of D.~Bar-Natan in \cite{bn1} and our version in \cite{tub}. Therefore we skip the details.

\begin{theo}[The geometric complex is an invariant]
\label{theo-geoinvarianz}
Let $T^k_D,T'^k_D$ be two v-tangle diagrams with the same *-marker position which differs only through a finite sequence of isotopies and generalised Reidemeister moves. Then the complexes $\bn{T^k_D}$ and $\bn{T'^k_D}$ are equal in $\ukob(k)_{hl}$.
\end{theo}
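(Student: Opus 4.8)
The plan is to follow the strategy of D.~Bar-Natan in \cite{bn1} (and its virtual adaptation in \cite{tub} for the case $k=0$), reducing the statement to a finite list of local checks, one per generalised Reidemeister move. Since the geometric complex is built locally — a saddle is the identity outside a small neighbourhood of the crossing it resolves — and since $\ukob(k)_{hl}$ is closed under the operation of ``tensoring in'' an unchanged piece of diagram, it suffices to produce, for each move in Figure \ref{figure0-1} performed inside a disk, an explicit isomorphism or chain homotopy equivalence between the complexes of the two sides that is the identity on the boundary. Boundary preserving isotopies of v-tangle diagrams are already taken care of: they change only the numbering of crossings, the numbering of v-circles, the x-marker rotation or the orientations of the closure's resolutions, and Lemma \ref{lemma-everythingfine} says the complex is independent of all these choices.

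First I would dispose of the purely virtual moves vRM1, vRM2, vRM3 and the mixed move mRM. A v-move involves no classical crossing, so resolving produces a single object and the two complexes are one-term; the underlying collections of v-circles and v-strings differ only by a boundary preserving planar isotopy of diagrams-with-virtual-crossings, so the objects are equal in $\ucob_R(k)$ by the homeomorphism/immersion convention of Definition \ref{defin-category}. The move mRM involves one classical crossing, so each side is a two-term complex with a saddle as differential; after resolving, the objects on the two sides are again related by virtual moves, hence equal, and one checks that the saddle decoration (gluing numbers, sign, indicator) is transported correctly — this is immediate from the way Definition \ref{defin-saddlestructure} spreads decorations via the closure, together with Lemma \ref{lemm-calc}.

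Next come the classical moves RM1, RM2, RM3. Here the argument is essentially that of \cite{bn1}: RM1 is killed by the sphere/evaluation relation in Figure \ref{figure0-2}, producing a homotopy equivalence via a single (co)multiplication together with a dotted-sphere correction; RM2 is handled by Gaussian elimination, cancelling an acyclic $\Id$-indexed summand and leaving a one-term complex; and RM3 is reduced to RM2 by the ``categorified Kauffman trick'', applying the RM2 equivalence on one side and conjugating. The only genuinely new ingredient in the virtual setting is bookkeeping: one must verify that the boundary decorations ($\pm$ gluing numbers) and the indicators ($0,+1,-1$) along the relevant saddles match under these equivalences, and that the signs produced by the x-marker/numbering rule of Definition \ref{defin-saddlestructure} stay consistent. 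This works because all cobordisms entering the RM equivalences are orientable and closed up near the move, so over that part of the diagram they carry the standard decorations and the computation reduces to the classical one; the commutation formulas of Lemma \ref{lemm-calc} move any stray $\Phi^-_+$, $\theta$ or $\Id(-1)^+_+$ out of the way, and Lemma \ref{lemma-everythingfine} absorbs any residual sign/orientation ambiguity.

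The main obstacle will be precisely this last point: controlling the interaction of the technical sign, decoration and indicator data with the homotopies, especially for RM3, where several faces of the Khovanov cube are involved and a spanning-tree style argument (as in the Notation section) may be needed to align the orientations of the resolutions before the Gaussian-elimination step can be applied. Everything topological is classical; the real work lies entirely in checking that the decorations survive, and this is where the proof is longest, but — given Lemmas \ref{lemm-calc} and \ref{lemma-everythingfine} — essentially routine, which is why the detailed verification is only sketched here.
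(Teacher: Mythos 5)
Your high-level strategy matches the paper's: reduce to local checks per generalised Reidemeister move, cite D.~Bar-Natan and the $k=0$ case from \cite{tub}, and use Lemma \ref{lemma-everythingfine} to dispose of numbering/orientation/x-marker choices. But there is one genuine gap, and it is the crux of the whole virtual story. You assert that ``since $\ukob(k)_{hl}$ is closed under the operation of tensoring in an unchanged piece of diagram, it suffices'' to verify the equivalence in a disk. In the classical setting this is a routine consequence of the planar-algebra structure on Bar-Natan's categories. In the virtual setting it is \emph{not} automatic, because the saddle decorations (gluing numbers, sign, indicator) of Definition \ref{defin-saddlestructure} are defined via the \emph{closure} of the whole tangle diagram, which is a global datum: a local move can, a priori, perturb the decorations attached to saddles living far away. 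Making precise when local data tensors up is exactly what Section \ref{circuit} is for, and it turns out to be subtle — part (c) of Theorem \ref{theo-semiloc} shows that the bracket $\bn{\cdot}$ is \emph{not} a morphism of decorated circuit algebras in general, with Figure \ref{figure2-4} as a counterexample. The paper's proof therefore explicitly invokes Theorem \ref{theo-semiloc}(d) (the bracket is an $\mathrm{NT^{*}}$-morphism, i.e.\ tensoring works for nice tangles) and Corollary \ref{koro-reidemeister} (the relevant Reidemeister pieces are nice, with mRM handled by a separate case analysis since its resolutions have non-negligible v-crossings). Without these inputs, your appeal to locality does not close the argument.

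Your treatment of the individual local checks is otherwise aligned with the paper's intent: virtual moves give one-term complexes, mRM needs a decoration check, RM1/RM2/RM3 go via Bar-Natan's sphere relation, Gaussian elimination and the Kauffman trick, and Lemmas \ref{lemm-calc}/\ref{lemma-everythingfine} manage the decoration bookkeeping. The paper also flags one detail you omit: the chain homotopies imported from \cite{tub} must all be given indicator $+1$. The fix to your proposal is simply to state and use the semi-locality result (or prove the relevant $\mathrm{NT^{*}}$-morphism property directly for the Reidemeister pieces) before claiming that the local equivalences extend to $\bn{T^k_D}\simeq\bn{T'^k_D}$ in $\ukob(k)_{hl}$.
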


\begin{proof}
Because the proof of D.~Bar-Natan in \cite{bn1} and therefore the corresponding proof in \cite{tub} is local we could copy the arguments from there. The Lemma \ref{lemma-everythingfine} guarantees that we can choose the numbering and orientations without changing anything up to chain isomorphisms.

Furthermore we use the statements of the Theorem \ref{theo-semiloc} and the Corollary \ref{koro-reidemeister} to finish the proof.

Beware that the chain homotopies in \cite{tub} should all carry $+1$ as indicator.
\end{proof}

We need some notions now. Let $T^k_D$ denote a v-tangle diagram. We call a part of $T^k_D$ a \emph{connected part} if it is connected as the four-valent graph on a surface with boundary which is created by replacing every c-crossing with a four-valent vertex and eliminating v-crossings by adding handles to a sphere with boundary. The genus of the surfaces should be high enough such that the graph has no v-crossings any more.

We call a connected part of a v-tangle diagram \emph{fully internal} if it is not adjacent to the boundary. See Figure \ref{figure1-12}. The left v-tangle diagram has one connected part, which is not fully internal, and the right v-tangle diagram has two connected parts, one fully internal and one not fully internal.

\begin{figure}[ht]
  \centering
     \includegraphics[scale=0.6]{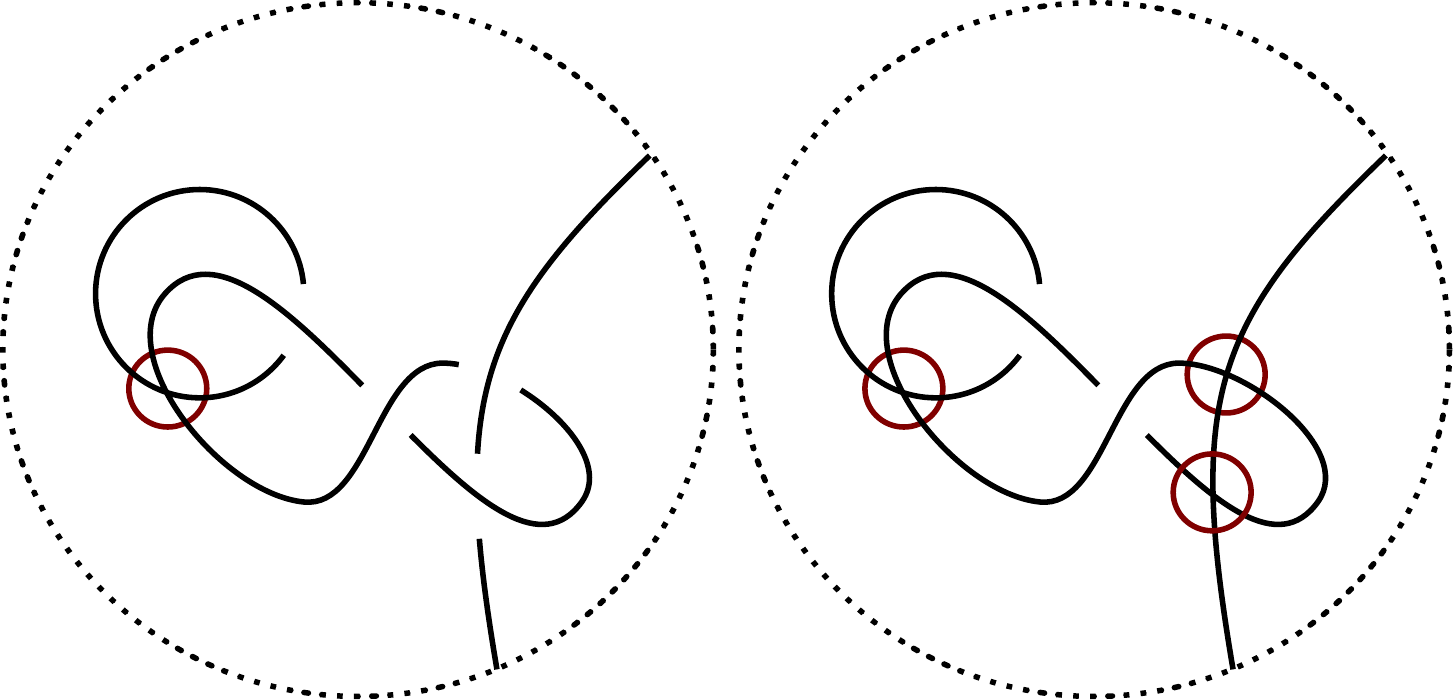}
  \caption{The left v-tangle diagram is not fully internal, but the right diagram has a fully internal component (the two internal v-circles).}
  \label{figure1-12}
\end{figure}

A virtual crossing is called \emph{negligible} if it is part of a fully internal part, e.g. all v-crossings of the right v-tangle diagram in Figure \ref{figure1-12} are negligible.

We call a v-tangle diagram $T^k_D$ \textit{nice} if there is a finite sequence of of vRM1, vRM2, vRM3 and mRM moves and virtualisations such that every v-circle is negligible, e.g. every v-link diagram is nice and every c-tangle diagram is nice. An example of a not nice v-tangle diagram is pictured in Figure \ref{figure2-4}.

We note that for a v-tangle diagram $T^k_D$ the chain complexes $\bn{T^k_D}^*$ and $\bn{T^k_D}_*$ are almost the same, i.e. they have the same vertices but possible different edges (which are still in the same position). The next lemma concretise the observation. The numbering in the lemma should be the same for the two complexes.

\begin{lemm}\label{lemma-factorbyisos}
Let $T^k_D$ be a v-tangle diagram. Let ${}_iS(\mathrm{in})^*$ and ${}_iS(\mathrm{in})_*$ denote the numbered saddles of $\bn{T^k_D}^*$ and of $\bn{T^k_D}_*$. Then we have a factorisation of the form ${}_iS(\mathrm{in})^*=\alpha\circ {}_iS(\mathrm{in})_*\circ\beta$ for invertible cobordisms $\alpha,\beta$ for every $i$ if $T^k_D$ is a nice v-tangle diagram.
\end{lemm}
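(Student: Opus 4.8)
The plan is to compare the two complexes edge by edge. The objects of $\bn{T^k_D}^*$ and $\bn{T^k_D}_*$ literally coincide: in homological degree $i-n_-$ both are the formal direct sum over the oriented open resolutions $\gamma_a$ of $T^k_D$ with $|a|=i$, and the $i$-th saddles ${}_iS(\mathrm{in})^*,{}_iS(\mathrm{in})_*\colon\gamma_a\to\gamma_{a'}$ occupy the same position. Moreover, after forgetting every decoration, ${}_iS(\mathrm{in})^*$ and ${}_iS(\mathrm{in})_*$ are the same cobordism — the identity outside a small neighbourhood $N_r$ of the relevant $r$-th crossing and a saddle inside $N_r$ — since capping off the boundary points in either of the two admissible ways never touches a crossing or its neighbourhood. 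So the only difference between the two saddles lies in the saddle decoration of Definition \ref{defin-saddlestructure}, namely the gluing numbers, the overall sign, and the indicator; and by that definition all three are read off from the closure. I therefore have to show that replacing the data induced by $\mathrm{Cl}^*(T^k_D)$ by the data induced by $\mathrm{Cl}_*(T^k_D)$ amounts to pre- and post-composing ${}_iS$ with invertible cobordisms.

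The gluing numbers and the sign are the easy part. The two closures induce in general two different formal orientations on the open resolution $\gamma_a$ (and likewise on $\gamma_{a'}$): orienting a closed circle of the closure restricts to orientations of the open strings and circles of $\gamma_a$ it is assembled from, and the two cappings assemble them differently. Any two orientations of the $1$-manifold $\gamma_a$ differ by reversing the direction on some of its components, and — this is exactly the dot-calculus of Figure \ref{figure0-main}, made precise by the relations of Definition \ref{defin-category} — reversing a component changes the incident saddle by composition with $\Phi^-_+$ on the corresponding closed-circle boundary component, resp. with $\Phi(1)^-_+$ on the corresponding open-string boundary component. Applying Lemma \ref{lemm-calc}(d) on the $\gamma_a$-side and on the $\gamma_{a'}$-side turns ${}_iS(\mathrm{in})^*$ into ${}_iS(\mathrm{in})_*$ up to the insertion of such $\Phi$'s and up to a global unit $\pm 1$; the $\Phi$'s (hence their disjoint unions with identities) are invertible by the relations of Definition \ref{defin-category}, and the unit $\pm 1$, together with any further sign discrepancy coming from the — possibly different — numberings of the circles of the two closures, is absorbed into the invertible factor.

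The indicator is where niceness is needed, and this is the main obstacle. A priori the saddle ${}_iS$ could be non-orientable with respect to one closure, giving indicator $0$, and orientable with respect to the other, giving indicator $\pm 1$; since $0$ is absorbing under composition this discrepancy could not be repaired by any invertible cobordism, and indeed the lemma fails for non-nice diagrams. So I must show this cannot occur when $T^k_D$ is nice. Using niceness I may assume $T^k_D$ is presented with every v-crossing negligible, i.e. contained in a fully internal connected part. Fix a crossing $r$. If $r$ lies in a fully internal part, that part is disjoint from the boundary, hence from all capping arcs, so the closed-resolution structure around ${}_iS$ — in particular its orientability type and its indicator — is the same for $\mathrm{Cl}^*(T^k_D)$ and for $\mathrm{Cl}_*(T^k_D)$. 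If $r$ lies in a part that touches the boundary, then that part — and hence the whole subdiagram formed by all boundary-touching parts together with the capping arcs — contains no v-crossing at all, so both closures are classical near $r$; since the resolutions of a classical diagram carry only the orientable cobordisms $m$ and $\Delta$ (and their open analogues), the M\"obius cobordism $\theta$ being a purely virtual phenomenon, ${}_iS$ has indicator in $\{+1,-1\}$ for both closures. In neither case does the pathological $0$-versus-$\pm 1$ discrepancy arise; the only remaining possibility is a $+1$-versus-$-1$ discrepancy, and this is undone by composing ${}_iS(\mathrm{in})_*$ with a suitable power of the invertible indicator-change cobordism $\Id(-1)^+_+$, which by Lemma \ref{lemm-calc}(a) commutes with ${}_iS(\mathrm{in})_*$ and can thus be folded into $\alpha$. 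Assembling the three steps gives the required ${}_iS(\mathrm{in})^*=\alpha\circ{}_iS(\mathrm{in})_*\circ\beta$ with $\alpha,\beta$ invertible.

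The delicate point, as indicated, is the orientability claim in the third step: one must be sure that for a nice diagram no saddle flips between orientable and non-orientable when the closure is changed. I reduced this to the essentially classical fact that capping a v-crossing-free tangle off — without introducing new v-crossings — produces a diagram with no M\"obius cobordisms in its Khovanov cube; a reader who prefers not to invoke this can instead run, face by face, the same finite reduction by vRM1, vRM2, vRM3, mRM and virtualisation moves that underlies Lemma \ref{lemma-everythingfine} and the invariance arguments of \cite{tub}, and check the finitely many local configurations by hand.
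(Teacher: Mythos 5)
Your proof is correct and takes essentially the same route as the paper's: reduce by the invertibility of $\Phi^-_+$, $\Phi(1)^-_+$, $\Id(-1)^+_+$ and units $\pm1$ to the single question of whether the $0$-indicator can disagree between the two closures, then use niceness to pass to a diagram with only negligible v-crossings and observe that the $\theta$-case of Figure \ref{figure0-4} cannot arise there. Your two-case split (fully internal vs.\ boundary-touching parts) is just a slightly more spelled-out version of the paper's appeal to ``the rightmost case in Figure \ref{figure0-4} requires a non-negligible v-crossing.''
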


\begin{proof}
That the saddles are topological equivalent is clear. We only have take a look at the decorations. 

The main point is the following observation: of the four outer (two on both sides) cobordisms in the bottom row of Figure \ref{figure1-2}, i.e. $\Id(1)^+_+$, $\Phi(1)^-_+$, $\Id(0)$ and $\Id(-1)^+_+$, only the third is not invertible. The first is the identity, the second and fourth are their own inverses. The third is not invertible because the $0$-indicator can not be changed to an $\pm 1$-indicator.

Note that none of the vRM1, vRM2, vRM3 and mRM moves nor a virtualisations changes the indicator of a saddle cobordism. Hence, it is sufficient to show the statement for a v-tangle diagram with only negligible v-crossings. From the observation above it is enough to show that every saddle gets an $0$-indicator in one closure iff it gets an $0$-indicator in the other closure.

The only possible way that a saddle gets an indicator from $\{+1,-1\}$ for one closure and an $0$-indicator for the other closure is the rightmost case in Figure \ref{figure0-4}. But for this case the existence of a non negligible v-crossing is necessary. Hence, we get the statement.    
\end{proof}

\begin{satz}\label{prop-nicechainiso}
Let $T^k_D$ be a v-tangle diagram. Then $\bn{T^k_D}^*$ and $\bn{T^k_D}_*$ are chain isomorphic if $T^k_D$ is nice.
\end{satz}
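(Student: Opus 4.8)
The plan is to write down an explicit chain isomorphism $\Psi\colon\bn{T^k_D}^*\to\bn{T^k_D}_*$ whose component at each resolution $\gamma_a$ is a ``decoration change'' cobordism, and to check that it intertwines the two differentials saddle by saddle, the factorisations coming for free from Lemma \ref{lemma-factorbyisos}.

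First I would normalise. By Lemma \ref{lemma-everythingfine} we may compute $\bn{T^k_D}^*$ and $\bn{T^k_D}_*$ with the \emph{same} numbering of the crossings, the \emph{same} rotations for the saddle signs, the \emph{same} numbering of the v-circles and the \emph{same} orientations for all resolutions $\gamma_a$, changing each complex only up to chain isomorphism. After this normalisation the two complexes have literally the same chain modules, and differ only through the decorations (gluing numbers, indicators, formal signs) attached to the saddles ${}_iS$. Next I would invoke niceness: Lemma \ref{lemma-factorbyisos} gives, for every $i$, a factorisation ${}_iS(\mathrm{in})^*=\alpha_i\circ{}_iS(\mathrm{in})_*\circ\beta_i$ with $\alpha_i,\beta_i$ invertible, and its proof shows that the indicator of every saddle agrees for the two closures (niceness removes the only obstruction, the rightmost case of Figure \ref{figure0-4}). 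Hence each $\alpha_i,\beta_i$ is built purely from $\Phi^-_+$, $\Phi(1)^-_+$ and identities, i.e.\ it is a genuine decoration-change isomorphism supported on the one or two boundary components of the saddle.

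The heart of the argument is to glue these edge-local corrections into a vertex-indexed family. The key observation is that the decoration carried by a v-circle/v-string $\mathcal O_j$ of $\gamma_a$ depends, in the sense of Definition \ref{defin-saddlestructure}, only on the orientation of $\mathcal O_j$ and on the closure (one compares $\mathcal O_j$ to the standard form dictated by that closure). So I would define $\Psi_a\colon\gamma_a\to\gamma_a$ to be the tensor product over the components of $\gamma_a$ of factors $\xi_j$, where $\xi_j=\Phi^-_+$ (or $\Phi(1)^-_+$ for a string) if the standard orientations of $\mathcal O_j$ in the two closures disagree, and $\xi_j=\Id^+_+$ (or $\Id(1)^+_+$) otherwise; here niceness is used again to ensure that away from non-negligible v-crossings the two closures induce comparable circle/string patterns. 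Each $\Psi_a$ is invertible by the relations of Definition \ref{defin-category}. It then remains to verify, for every saddle ${}_iS\colon\gamma_a\to\gamma_{a'}$, the identity $\Psi_{a'}\circ{}_iS(\mathrm{in})^*={}_iS(\mathrm{in})_*\circ\Psi_a$: on the components not touched by the saddle both sides act by the same $\xi_j$ on a spectator circle, which slides through the saddle by Lemma \ref{lemm-calc}(c); on the one or two components genuinely involved in ${}_iS$ this is precisely the factorisation of Lemma \ref{lemma-factorbyisos}, after matching $\alpha_i=\Psi_{a'}$ restricted to those components and $\beta_i$ with the inverse of $\Psi_a$ restricted to those components, and using Lemma \ref{lemm-calc}(d) to move the $\Phi^-_+$'s through a comultiplication or multiplication.

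The step I expect to be the main obstacle is the sign bookkeeping inside this last check: sliding a $\Phi^-_+$ through an $m$ or $\Delta$ saddle produces the formal sign of Lemma \ref{lemm-calc}(d), and one must confirm that this sign is exactly the discrepancy between the saddle signs of the two closures coming from Definition \ref{defin-saddlestructure} (different closures can make a given v-circle ``higher-'' or ``lower-numbered'' relative to the x-marker). A convenient way to organise this is the spanning-tree argument on the Khovanov cube (as in Figure \ref{figure0-5}): fix $\Psi$ on a root resolution, propagate it along the tree edges using the $\alpha_i,\beta_i$, and then verify that every square face closes up, using that all faces of both cubes are anticommutative (Lemma \ref{lemma-everythingfine}) together with the edge factorisations of Lemma \ref{lemma-factorbyisos}; the anticommutativity reduces each face-closure to exactly the sign identity above, so that the corrections are forced to be consistent and $\Psi$ is a well-defined chain isomorphism.
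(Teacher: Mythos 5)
Your proposal is correct and takes essentially the same route as the paper: both rest on the same three pillars — normalise via Lemma \ref{lemma-everythingfine}, factor each saddle through invertible decoration-change cobordisms via Lemma \ref{lemma-factorbyisos} (with niceness removing the $0$-indicator obstruction), and then use a spanning-tree argument on the Khovanov cube to make the local corrections globally consistent, relying on anticommutativity of faces to close every square. The one cosmetic difference is that the paper does the spanning-tree propagation in two passes (first matching formal signs by reorienting resolutions, then matching indicators) and takes the composite, whereas you build a single vertex-indexed map $\Psi_a$ from tensors of $\Phi^-_+$'s and identities and check the intertwining directly; both amount to the same chain isomorphism, and your version is just a more explicit unpacking of what the paper leaves implicit.
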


\begin{proof}
%If $T^k_D$ is not nice then we can use Lemma \ref{lemma-factorbyisos} to observe that every map of chain complexes $f\colon\bn{T^k_D}^*\to\bn{T^k_D}_*$ has a cobordism with an $0$-indicator. Hence, $f$ could not be an isomorphism.
%
Let $T^k_D$ be a nice v-tangle diagram. Then Lemma \ref{lemma-factorbyisos} ensures that every saddle is the same, up to isomorphisms, in $\bn{T^k_D}^*$ and $\bn{T^k_D}_*$. Furthermore Lemma \ref{lemma-everythingfine} ensures that both are well-defined chain complexes. Hence, the number of signs of every face is odd (also counting the ones from the decorations).

Hence, we can use a spanning tree argument to construct the chain isomorphism explicit, i.e. start at a leaf of a spanning tree of the Khovanov cube and change the orientations of the vertex such that the unique outgoing edge of the tree has the same sign in both cases (Lemma \ref{lemma-everythingfine} ensures that nothing changes modulo chain isomorphisms). Continue along the vertices of the spanning tree remove already visited leafs. This construction generates a chain isomorphism.

Next repeat the whole process, but change the indicators now. The Lemma \ref{lemma-factorbyisos} ensures that this will also generate a chain isomorphism. The chain isomorphism that we need is the composition.

See for example Figure \ref{figure1-10}. 
\end{proof}

The following theorem ensures that the choice of the position of the marker is not important for classical v-tangles or v-links. Hence, we can say that our notions extents both, the classical version and the version for v-tangle diagrams. Note that this is also an invariant for the question if a v-tangle diagram is classical or not.

\begin{theo}[Two different chain complexes]\label{theo-marker}
Let $T^k_D$ be a v-tangle diagram with two different *-marker positions. Let $\bn{T^k_D}^*$ and $\bn{T^k_D}_*$ be the geometric complex from Definition \ref{defin-geocomplex} for the two positions. Then the two complexes are equal in $\ukob(k)_{hl}$ if the v-tangle has $k=0$ or is a c-tangle.
\end{theo}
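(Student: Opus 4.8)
The plan is to reduce both cases to Proposition~\ref{prop-nicechainiso} by passing to a \emph{nice} representative of the v-tangle, and then to transport the resulting chain isomorphism back along the invariance Theorem~\ref{theo-geoinvarianz}. The case $k=0$ is immediate: a v-tangle diagram with no boundary points is just a v-link diagram, so there are no boundary points to cap off, $\bn{T^0_D}^*=\bn{T^0_D}_*$ is literally the complex of \cite{tub}, and there is nothing to prove; equivalently, a v-link diagram is nice by definition, so Proposition~\ref{prop-nicechainiso} already gives a chain isomorphism.

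So assume $T^k$ is a c-tangle. By definition the class $T^k$ contains a classical tangle diagram $T^k_{D'}$, i.e.\ $T^k_D$ and $T^k_{D'}$ differ by a finite sequence of boundary preserving isotopies and generalised Reidemeister moves. These moves fix the $k$ boundary points, hence also carry a chosen $*$-marker along and preserve the labelling of the two closures as $\bn{\cdot}^*$ and $\bn{\cdot}_*$; by Lemma~\ref{lemma-everythingfine} we may moreover choose the numberings of crossings and v-circles compatibly. Applying Theorem~\ref{theo-geoinvarianz} once for each of the two marker positions then gives
\[
\bn{T^k_D}^*=\bn{T^k_{D'}}^*\quad\text{and}\quad\bn{T^k_D}_*=\bn{T^k_{D'}}_*\quad\text{in }\ukob(k)_{hl}.
\]

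Finally, the classical tangle diagram $T^k_{D'}$ has no virtual crossings, so it is nice; hence Proposition~\ref{prop-nicechainiso} yields a chain isomorphism $\bn{T^k_{D'}}^*\cong\bn{T^k_{D'}}_*$, and in particular an equality in $\ukob(k)_{hl}$. Concatenating the three equalities gives $\bn{T^k_D}^*=\bn{T^k_D}_*$ in $\ukob(k)_{hl}$, as claimed.

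I expect the only delicate point to be the middle step, namely that the $*$-marker position and the identification of the two closures can really be tracked along the reduction to a classical diagram (so that Theorem~\ref{theo-geoinvarianz} applies to the \emph{same} marker data). This is also exactly where the hypothesis is used: for a general v-tangle diagram there need be no nice representative (Figure~\ref{figure2-4}), so the argument breaks down — and, as remarked after the statement, the failure of $\bn{T^k_D}^*=\bn{T^k_D}_*$ is then precisely what detects non-classicality. Everything else is a formal chaining of results already established.
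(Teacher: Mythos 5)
Your proposal is correct and follows the same route as the paper: handle $k=0$ via Proposition~\ref{prop-nicechainiso} directly, and for a c-tangle use Theorem~\ref{theo-geoinvarianz} to pass to a classical (hence nice) representative, then apply Proposition~\ref{prop-nicechainiso} and concatenate. Your version is somewhat more explicit than the paper's — in particular spelling out that Theorem~\ref{theo-geoinvarianz} is applied once for each marker position, and noting that the marker and the two closures are unambiguously tracked because the generalised Reidemeister moves are interior to $D^2$ and fix the boundary — but the underlying argument is identical.
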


\begin{proof}
For a v-tangle with $k=0$ we can use the Proposition \ref{prop-nicechainiso} above.

For a classical v-tangle we can choose a diagram without virtual crossing without changing anything up to chain homotopies, because of Theorem \ref{theo-geoinvarianz}. Then we can use the Proposition \ref{prop-nicechainiso} again.
\end{proof}

Note that the whole construction can be done with an arbitrary \emph{non virtual closure} of a v-tangle diagram, i.e. capping it of without creating new v-crossings. One can repeat the arguments above to prove analogous statements like above for this construction, too.

\begin{bem}\label{rem-gradings}
We could use the Euler characteristic to introduce the structure of a graded category (see \cite{bn1}) on $\ucob_R(k)$ (and hence on $\kob_R(k)$).

The differentials in the geometric complex from Definition \ref{defin-geocomplex} have all $\deg=0$ (after a grade shift), because their Euler-characteristic is $-1$.

Then it is easy to proof that the geometric complex is a v-tangle invariant under graded homotopy. The proof of this statement is analogue to the one of D.~Bar-Natan from \cite{bn1}.
\end{bem}

\section{Circuit algebras and virtual tangles}\label{circuit}
In the present section we describe the notion of a \textit{circuit algebra}. A circuit algebra is almost the same as a planar algebra, but we allow virtual crossings.

Planar algebras were introduced by V.~Jones (see \cite{vj}) and were for example studied by D.~Bar-Natan in the case of classical Khovanov homology (see \cite{bn1}). Hence, we can use most of his constructions in our case, too.

A crucial difference is that we need to \textit{decorate} our \textit{circuit diagrams}. This is necessary because our cobordisms are also decorated.

We start the section with the definition of a circuit diagram. Recall that we call v-tangle diagram \emph{decorated} if all of its strings/circles are oriented. We call a cobordisms \emph{decorated} if it has gluing numbers at the boundary and indicators, see Definitions \ref{defin-vtangle} and \ref{defin-category}.

Moreover, in the whole section every v-tangle diagram should carry a *-marker.

\begin{defin}\label{defin-circuit diagram}
Let $D^2_o$ denote a disk embedded into $\real^2$, the so-called \emph{outside disk}. Let $\mathrm{I}_k$ denote $k\in\{0,\dots,m-1\}$ disks $D^2$ embedded into $\real^2$ such that for all $k\in\{0,\dots,m-1\}$ the disk $\mathrm{I}_k$ is also embedded into $D^2_o$ without touching the boundary of $D^2_o$, i.e. $\mathrm{I}_k\subset D^2_o\subset \real^2$ and $\mathrm{I}_k\cap\partial D^2_o=\emptyset$. We denote $\mathcal D_m=D^2_o-(\mathrm{I}_0\cup\cdots\cup\mathrm{I}_{m-1})$.

A \textit{circuit diagram with $m$ input disks} $\mathcal{CD}_m$ is a planar graph embedded into $\mathcal D_m$ with only vertices of valency one and four in such a way that every vertex of valency one is in $\partial\mathcal D_m$ and every vertex of valency four is in $\interior (\mathcal D_m)$. All vertices of valency four are marked with a virtual crossing. Again we allow circles, i.e. closed edges without any vertices. See for example Figure \ref{figure2-1}. A \textit{*-marked circuit diagram} is the same with $m+1$ extra *-marker for every boundary of $\mathcal D_m$. Moreover, we call the vertices at $\partial D^2_o$ the \textit{outer boundary points}.

A \textit{closure} of a *-marked circuit diagram with $m$ input disks $\mathrm{Cl}(\mathcal{CD}_m)$ is a circuit diagram with $m$ input disks and without any outer boundary points which is constructed from $\mathcal{CD}_m$ by capping of neighbouring strings starting from the *-marker and proceeding counter-clockwise.  

Note that we only cap of the outside disk and not the small inside disks.

A \textit{decoration} for a *-marked circuit diagram is a tuple of a numbering and an orientation of the strings of the diagram in such a way that its also an numbering and orientation of the closure. We call a circuit diagram together with a decoration a \textit{decorated circuit diagram}. See for example Figure \ref{figure2-1}. The decoration of the circuit diagram in this figure is also a decoration for the closure (the diagram together with the green lines).
\begin{figure}[ht]
  \centering
     \includegraphics[scale=0.6]{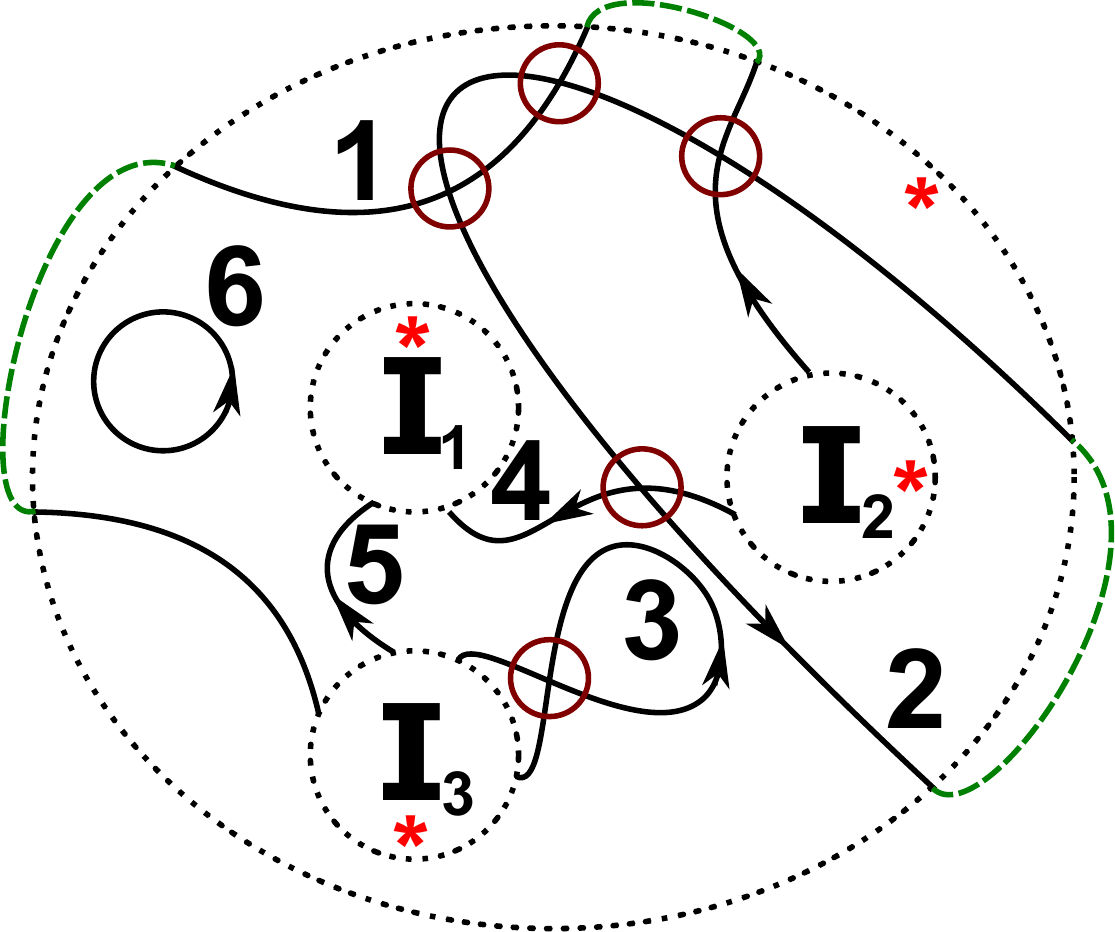}
  \caption{A decorated circuit diagram with three input disks.}
  \label{figure2-1}
\end{figure}
\end{defin}

With this setting we can implement the definition of a \textit{(decorated) circuit algebra}. We follow the definitions of D.~Bar-Natan and V.~Jones (see \cite{bn1}, \cite{vj}). Recall that our v-tangle diagrams should always be oriented.

\begin{defin}[Circuit algebra]\label{defin-circuit algebra}
Let $\mathrm{T'}(k)$ be the set of (decorated) v-tangle diagrams with $k$ boundary points and a *-marker and let $\mathrm{T}(k)$ denote the quotient by boundary preserving isotopies and generalised Reidemeister moves.

Furthermore let $\mathcal{CD}_m$ denote a (decorated) circuit diagram with $m$ input disks and $k'$ outer boundary points in such a way that the $j$-th input disk has $k_j$ numbered boundary points. 

Because $\mathcal{CD}_m$ has no classical crossings, this induces operations
\[
\mathcal{CD}_m\colon \mathrm{T'}(k_0)\times\cdots\times \mathrm{T'}(k_{m-1})\to \mathrm{T'}(k')\text{ and }\mathcal{CD}_m\colon \mathrm{T}(k_0)\times\cdots\times \mathrm{T}(k_{m-1})\to \mathrm{T}(k')
\]
by placing the $i$-th v-tangle diagram from $\mathrm{T^{(')}}(k_i)$ in the $i$-th boundary component of $\mathcal{CD}_m$, i.e. glue the v-tangle inside in such a way that the *-markers match. See the right side of Figure \ref{figure2-2}. 
\begin{figure}[ht]
  \centering
     \includegraphics[scale=0.6]{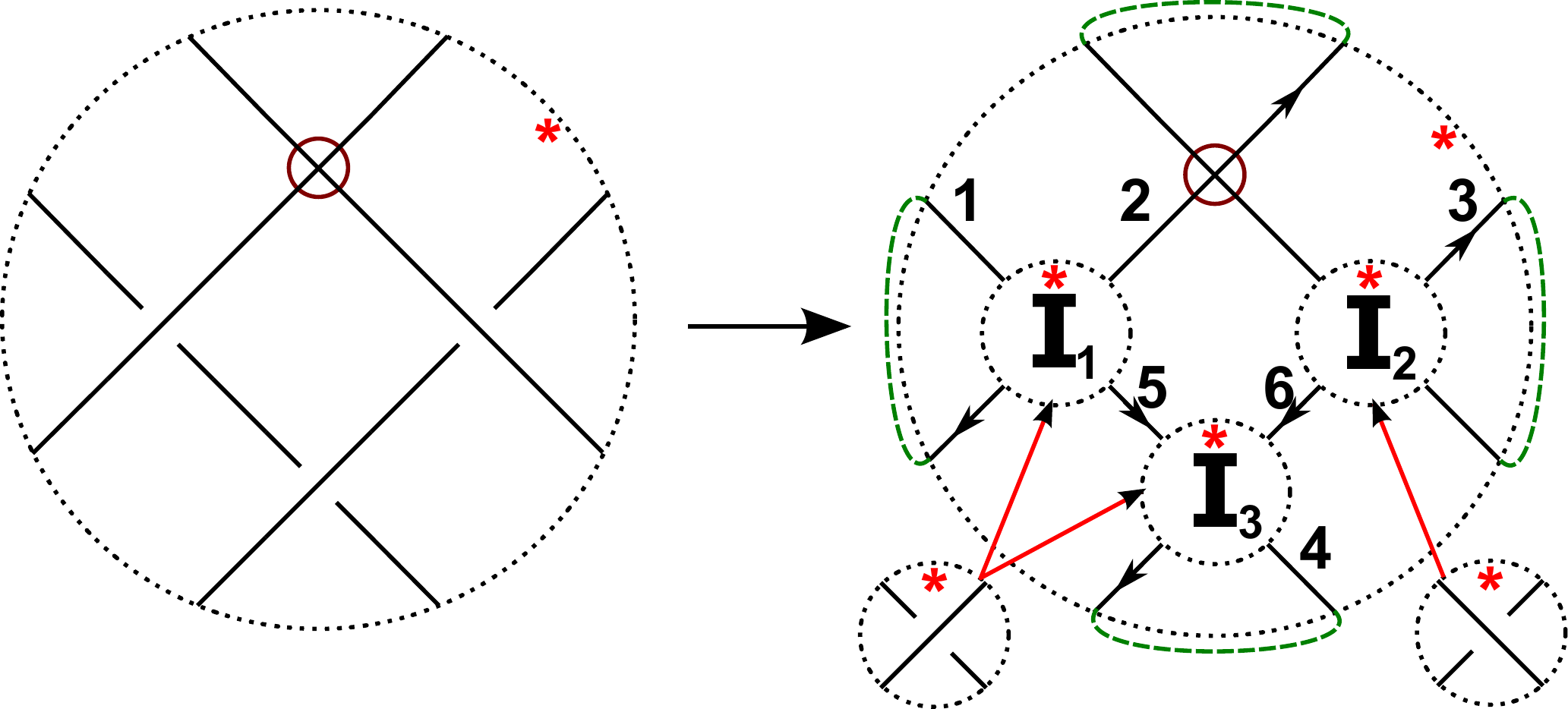}
  \caption{A decorated circuit diagram induced by a v-tangle.}
  \label{figure2-2}
\end{figure}

There is an identity operation on $\mathrm{T^{(')}}(k)$ (it is of the form $\jpg{5mm}{circuitid}$) and the operations are compatible in a natural way (''associative'').

We call a set of sets $\mathfrak C(\omega)$ along with operations $\mathcal{CD}_m$ like above a \textit{circuit algebra}, provided that the identity and associativity from above holds.

If the operators and elements are decorated, first with numbers and orientations and latter only with orientations, then we call a set of sets $\mathfrak C(\omega)$ like above a \textit{decorated circuit algebra}. Note that in this case we have to define how the decorations change after gueing, since we can run into ambiguities.
\end{defin}

A necessary definition is the notion of a $X$-morphism. Here $X$ is a subset of a circuit algebra $\mathfrak C(\omega)$.

We need this notion because, in contrast to the classical case, the bracket is not a morphism of circuit algebras. See (c) of Theorem \ref{theo-semiloc} below. 

\begin{defin}[Circuit morphism]\label{defin-cirmor}
A \textit{morphism $\rho$ of (decorated) circuit algebras} $\mathfrak C(\omega)$ and $\mathfrak C'(\omega)$ is a map $\rho\colon \mathfrak C(\omega)\to \mathfrak C'(\omega)$ which satisfies $\rho\circ \mathcal{CD}_m=\mathcal{CD}_m\circ(\rho\times\cdots\times\rho)$ for every (decorated) operation $\mathcal{CD}_m$.

Let $X$ be a subset of $\mathfrak C(\omega)$. A \textit{$X$-morphism $\rho$ of (decorated) circuit algebras} $\mathfrak C(\omega)$ and $\mathfrak C'(\omega)$ is a map $\rho\colon \mathfrak C(\omega)\to \mathfrak C'(\omega)$ which satisfies
\[
\rho\circ \mathcal{CD}_m|_{X}=\mathcal{CD}_m\circ(\rho|_{X}\times\cdots\times\rho|_{X})
\]
for every (decorated) operation $\mathcal{CD}_m$
\end{defin}

Here are some examples. The reader may also check the corresponding section in the paper of D.~Bar-Natan from \cite{bn1}.
\begin{beisp}\label{beisp-circuit}
The first example is the set $\Ob(\ucob(\omega^{*}))$ from Definition \ref{defin-category}, i.e. v-tangles diagrams with $k\in\nat$ boundary points without classical crossings and an extra *-marker. This is a sub-circuit algebra of the circuit algebra that allows classical crossings.

This example gets more exciting if we want to view it as a decorated circuit algebra. We have to define the operations, i.e. the gluing inside the input disks, in more detail now, since we can run into ambiguities, see top row of Figure \ref{figure2-3}.
\begin{figure}[ht]
  \centering
     \includegraphics[scale=0.7]{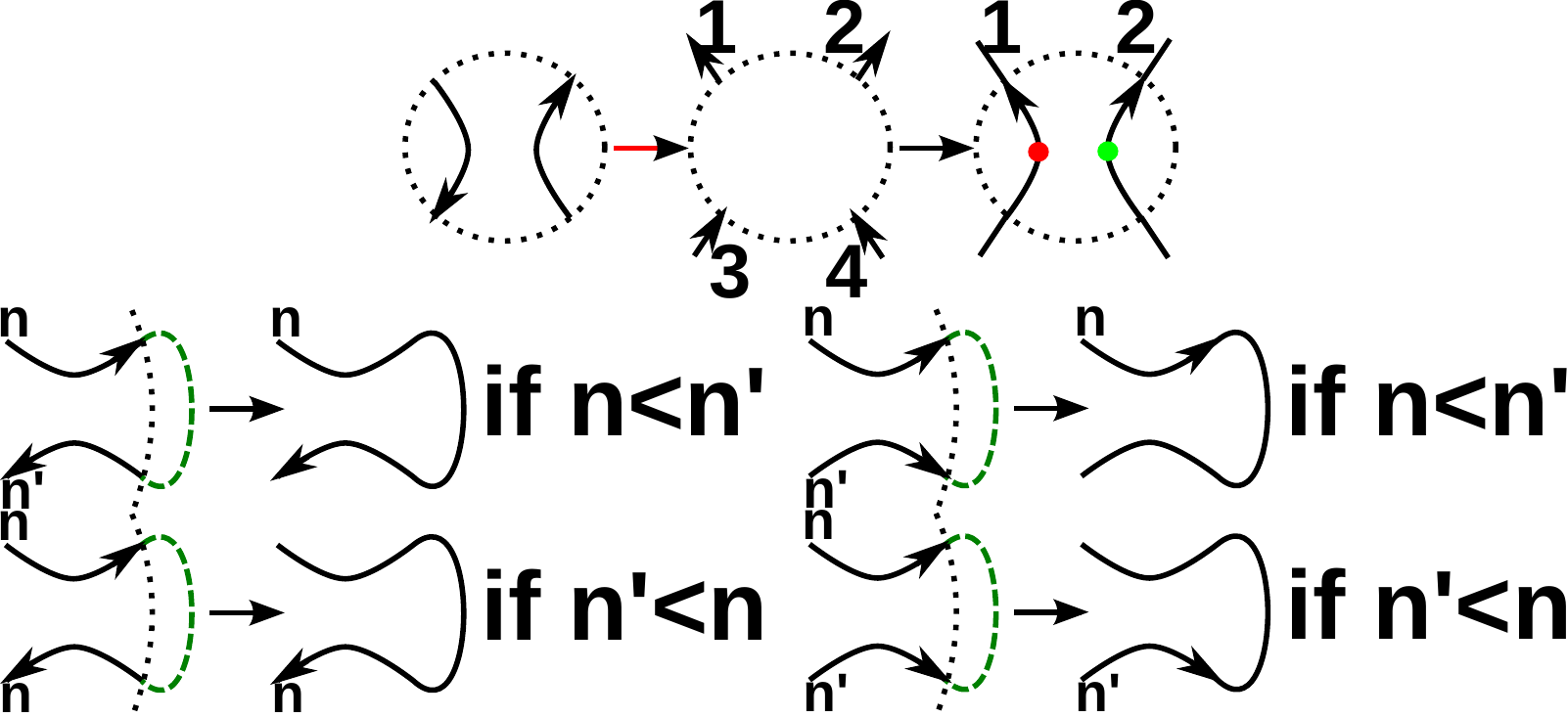}
  \caption{The operation in the decorated circuit algebra.}
  \label{figure2-3}
\end{figure}

We can run into ambiguities if the decorations of the operator get glued together do not match up. In this case we define the new decoration based on the rule \textit{``lower first''}, i.e. the new number should be the lower and the new orientation should be the one from the lower numbered string. See lower row of Figure \ref{figure2-3}. Not all four cases are pictured but it should be clear how the other two work.  

Furthermore if we glue a decorated v-tangle diagram in an input disk, then we run into ambiguities if the shared decorations, i.e. the orientations, does not match up. In this case we change the decorations of the v-tangle diagram. We add in a \textit{red dot} if we have to change the orientation and a \textit{green dot} otherwise. This is pictured in the top row of Figure \ref{figure2-3}.

The reader should check that this gives rise to a decorated circuit algebra.

Another important example is the whole collection $\Mor(\ucob(\omega^{*}))$ from Definition \ref{defin-category}, i.e. decorated cobordisms with $k\in\nat$ vertical boundary lines and an extra *-marker. We want to view this example as a decorated circuit algebra again.

Hence, we have to define the operations. The most interesting point are the decorations again, because it should be clear how to glue a cobordism with $m$ vertical boundary lines into $\mathcal{CD}_m\times [-1,1]$. This time we have to define the behaviour of three decorations, i.e. the gluing numbers and the signs and indicators.

The gluing numbers should be treated like the orientations before, i.e. if they do not match, then we use the gluing number of the lower numbered string. The indicators (recall that they are just numbers in $\{0,+1,-1\}$) should be multiplied. Recall that a cobordism with an $0$-indicator does not get any gluing numbers. We simply remove them in this case.

We get the following interaction of the operation of $\mathcal{CD}_m$ on $\Ob(\ucob(\omega^{*}))$ and $\Mor(\ucob(\omega^{*}))$ (compare to Figure \ref{figure2-3}):
\begin{itemize}
\item A saddle is composed with $\Phi^-_+$ iff the v-tangle gets a red dot at the corresponding position.
\item A saddle is composed with an $0$-indicator surfaces iff the v-tangle gets a red and a green dot in both resolutions at the corresponding position.
\item A saddle is composed with an $-1$-indicator surfaces iff the v-tangle numbers get identified at the top resolution at the corresponding position.
\item A saddle gets an extra minus sign iff the v-tangle numbers change in such a way that the sign of $n'-n$ is changed at the corresponding position.
\end{itemize} 

These rules define a new decoration for the new cobordism. The reader should check again that this gives rise to a decorated circuit algebra.
\end{beisp}

We summarise the notions in a definition. Recall that v-tangle diagrams are decorated with orientations and a *-marker and cobordisms are decorated with orientations on the boundary, an indicator and a *-marker.

\begin{defin}[Dot-calculus]\label{defin-dotcalculus}
Let $\mathcal{CD}_m$ denote a decorated circuit diagram with $m$ input disks and $k'$ outer boundary points in such a way that the $j$-th input disk has $k_j$ numbered boundary points.

Then $\mathcal{CD}_m$ induces an associative and unital (in the above sense) operation on decorated v-tangles (with a corresponding number of boundary points) by the ``lower first''-rule, i.e. if the orientation does not match, then the lower number induces the new orientation. Put a red dot on every string that has its orientation changed and a green dot otherwise. We call this the \emph{tangle dot-calculus}.

Moreover, $\mathcal{CD}_m$ induces an associative and unital (in the above sense) operation on decorated cobordisms (with a corresponding number of boundary lines) by the ``lower first''-rule, i.e. if the orientation does not match, then the lower number induces the new orientation. Put a red dot on every string that has its orientation changed and a green dot otherwise and compose the corresponding boundary with $\Phi^-_+$ iff the string has a red dot, multiply indicators via identity surfaces with corresponding indicators $+1,-1$ iff the v-tangle numbers get identified at the bottom/top resolution at the corresponding position, multiply with an $0$-identity iff in both resolutions the strings are identified and multiply with a formal sign iff the numbers change in such a way that the sign is changed at the corresponding position. We call this the \emph{dot-calculus}.

If the crossings carries x-markers, then they should be in the same position as before after the operation.  
\end{defin}

The reader should compare the notions above with Figure \ref{figure0-main}.

We follow the notions of D.~Bar-Natan in the next theorem, i.e. we use the notions $\ukob_R(k)$ and $\ukob_R(k)_h$ from Definition \ref{defin-category2}, but we also allow $k=\omega^{*}$, i.e. *-marked v-tangles with a finite number of boundary points. We denote the collection of *-marked nice v-tangles as $\mathrm{NT^{*}}\subset\mathrm{T^{*}}$.

Recall that every v-link diagram and every c-tangle diagram is nice.

\begin{theo}[Semi-locality]\label{theo-semiloc}
\begin{itemize}
\item[(a)] The whole collection $\ukob_R(\omega^{*})$ has a natural structure of a decorated circuit algebra.
\item[(b)] The operations on $\ukob_R(\omega^{*})$ preserves homotopy equivalence. Thus we obtain that $\ukob_R(\omega^{*})_h$ has a natural structure of a decorated circuit algebra.
\item[(c)] The bracket $\bn{\cdot}\colon\mathrm{T^{*}}(\omega)\to\ukob_R(\omega^{*})$ is not a morphism of decorated circuit algebras.
\item[(d)] The bracket $\bn{\cdot}\colon\mathrm{T^{*}}(\omega)\to\ukob_R(\omega^{*})$ is a $\mathrm{NT^{*}}$-morphism of decorated circuit algebras.
\end{itemize}
\end{theo}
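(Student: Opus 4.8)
\textbf{Proof proposal for Theorem \ref{theo-semiloc}(d).}
The plan is to follow D.~Bar-Natan's proof in \cite{bn1} that the Khovanov bracket is a morphism of planar algebras, and to track the extra decoration data along the gluing by means of the dot-calculus of Definition \ref{defin-dotcalculus}; niceness of the input tangles will enter exactly once, to rule out the obstruction responsible for part (c). First I would reduce the claim: by part (a) the target $\ukob_R(\omega^{*})$ is a decorated circuit algebra, and the circuit operations are associative and unital, so by Definition \ref{defin-cirmor} it suffices to verify, for a single decorated circuit diagram $\mathcal{CD}_m$ and nice v-tangle diagrams $T_1,\dots,T_m\in\mathrm{NT^{*}}$, that
\[
\bn{\mathcal{CD}_m(T_1,\dots,T_m)}=\mathcal{CD}_m\bigl(\bn{T_1},\dots,\bn{T_m}\bigr),
\]
where the right-hand side is computed by the dot-calculus. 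Here ``$=$'' is read up to the canonical chain isomorphisms of Lemma \ref{lemma-everythingfine} and Proposition \ref{prop-nicechainiso}, which is the right reading because the closure of the glued diagram realises, for each $T_i$, one particular non-virtual closure of $T_i$, and $T_i$ being nice, all such closures give chain isomorphic complexes (Proposition \ref{prop-nicechainiso} and the remark following Theorem \ref{theo-marker}).

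Next, the underlying complexes. Since $\mathcal{CD}_m$ has no classical crossings, resolving the classical crossings of $\mathcal{CD}_m(T_1,\dots,T_m)$ does not interact with $\mathcal{CD}_m$: the resolutions of the glued diagram are exactly the tuples $(\gamma_{a_1},\dots,\gamma_{a_m})$ of resolutions of the pieces, re-glued by $\mathcal{CD}_m$, and every saddle of the glued diagram is supported in a small neighbourhood of one classical crossing, hence lies in exactly one piece $T_i$ and is the identity outside that neighbourhood. Forgetting decorations, this is precisely Bar-Natan's planar-algebra argument, which is local and carries over verbatim, so the chain groups and the topological types of the differentials agree on the two sides. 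For the numbering and orientation of the resolutions I would choose, for the closure of the glued diagram, the data prescribed by the \emph{tangle} dot-calculus; the ``lower first'' rule of Definition \ref{defin-dotcalculus} together with Example \ref{beisp-circuit} is built precisely to make this a consistent choice of numbering and orientation for that closure.

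Finally, the decorations, which is the real work and where niceness is used. For each saddle one compares the gluing numbers, the sign and the indicator of the corresponding decorated cobordism on the two sides. The gluing numbers and signs are handled by the dot-calculus essentially formally: a red dot, i.e.\ an orientation change of a boundary string, amounts by Lemma \ref{lemm-calc}(d) to post-composition with $\Phi^-_+$, which flips the corresponding gluing number, and the v-circle numbering of the closure is altered only through the ``lower first'' rule, so the saddle sign of Definition \ref{defin-saddlestructure} changes exactly by the sign clause of Definition \ref{defin-dotcalculus}. The indicator is the delicate point: it records whether the saddle is $m$, $\Delta$ or $\theta$ (equivalently, two or one upper circles, or non-orientable). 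Gluing a piece into $\mathcal{CD}_m$ can identify upper or lower circles of a resolution of $T_i$, turning an $m$ into a $\Delta$ or vice versa, or it can make an orientable saddle non-orientable; the clauses of Example \ref{beisp-circuit} governing when the glued saddle is post-composed with a $+1$-, $-1$- or $0$-indicator identity surface are set up precisely to reproduce each of these changes as a product of indicators. The one change a purely multiplicative rule cannot absorb is a saddle that is non-orientable (indicator $0$) in $\bn{T_i}$ becoming orientable in $\mathcal{CD}_m(T_1,\dots,T_m)$; but, exactly as in the proof of Lemma \ref{lemma-factorbyisos} and the analysis around Figure \ref{figure0-4}, such a saddle forces a non-negligible virtual crossing inside $T_i$, which cannot occur since $T_i$ is nice. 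Hence every decoration of $\bn{\mathcal{CD}_m(T_1,\dots,T_m)}$ equals the dot-calculus output, and (d) follows. I expect this last step — verifying that for nice inputs the orientability, hence the $0$-versus-$\pm1$ status, of every saddle is preserved under the gluing, so that the multiplicative dot-calculus genuinely computes the decorations of the glued bracket — to be the main obstacle; everything else is Bar-Natan's locality argument plus routine relation-chasing via Lemma \ref{lemm-calc}.
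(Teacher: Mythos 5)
Your proposal addresses only part (d) (the substantive item), but for that part it is correct and follows essentially the paper's route: the paper's terse proof of (d) likewise appeals to Lemma~\ref{lemma-factorbyisos} to dispose of the indicator obstruction you identify, and to Bar-Natan's locality argument from \cite{bn1} for the underlying complexes, exactly as you lay out. One small precision worth adding: the closure that $\mathcal{CD}_m$ induces on $T_i$ is an arbitrary non-virtual closure, not necessarily one of the two $*$-marker closures to which Lemma~\ref{lemma-factorbyisos} literally applies, so your niceness step really invokes the remark following Theorem~\ref{theo-marker} that the lemma's argument extends to all non-virtual closures — a caveat the paper's proof also leaves implicit.
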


\begin{proof}
We follow the proof of the classical version in \cite{bn1}. Thus we start by extending the structure of a decorated circuit algebra of the set $\Ob(\ucob(\omega^{*}))$ and the one of $\Mor(\ucob(\omega^{*}))$ from Example \ref{beisp-circuit} to decorated circuit structures $\Ob(\mat(\ucob(\omega^{*})))$ and $\Mor(\mat(\ucob(\omega^{*})))$ multilinear.

Let $\mathcal{CD}_m$ denote a decorated circuit diagram with $m$ input disks and $k_0$ outer boundary points in such a way that the $j$-th input disk has $k_j$ numbered boundary points.

(a) For complexes $(\mathrm{Co}_j,d_j)$ with $j\in\{0,\dots,m-1\}$
\[
\begin{xy}
  \xymatrix{
      \cdots\ar[r]^{d^{l-1}_{j}} & \mathrm{Co}^l_j\ar[r]^{d^l_j}& \mathrm{Co}^{l+1}_{j}\ar[r]^{\phantom{a}d^{l+1}_{j}}& \cdots
  }
\end{xy}
\]
of v-tangle diagrams with $k_j$ boundary points we define the new ``tensored'' complex $(\mathrm{Co},d)=\mathcal{CD}_m(\mathrm{Co}_0,\dots,\mathrm{Co}_{m-1})$ as follows.

Let $\alpha_i,\beta_i$ denote the composition with the morphisms that we compose after applying the circuit diagram on cobordisms (see Example \ref{beisp-circuit} and Definition \ref{defin-dotcalculus} above), i.e. the red dots induce a composition with $\Phi^-_+$ (or a composition with a $0$-identity surface in the degenerated case) and a change in the numbering induces a composition with a cobordisms that changes signs and indicators.

Therefore we denote the operation of $\mathcal{CD}_m$ on cobordisms, i.e. the dot-calculus, as $\alpha\circ\mathcal{CD}_m\circ\beta$ to illustrate the difference to the classical case. We skip this notion for the objects.

The $i$-th chain module should be
\[
\mathrm{Co}^i=\bigoplus_{i=j_0+\dots+j_{m-1}}\mathcal{CD}_m(\mathrm{Co}_0^{j_0},\dots,\mathrm{Co}_{m-1}^{j_{m-1}})
\]
and the chain maps should be
\[
d|_{\mathcal{CD}_m(\mathrm{Co}_0^{j_0},\dots,\mathrm{Co}_{m-1}^{j_{m-1}})}=\sum_{i=0}^{m-1}\alpha\circ\mathcal{CD}_m(\mathrm{Id}_{\mathrm{Co}_0^{j_0}},\dots,d_i,\dots,\mathrm{Id}_{\mathrm{Co}_{m-1}^{j_{m-1}}})\circ\beta.
\]

Beware that the needed minus signs are not missing. They are already implied in the definition of the decorated circuit algebra structure of $\Mor(\mat(\ucob(\omega^{*})))$, i.e. the dot-calculus in particular adds or removes signs. See Example \ref{beisp-circuit} and Definition \ref{defin-dotcalculus}.

We use Lemma \ref{lemma-everythingfine} to see that we still get a chain complex.

Moreover, we need to define the operation of $\mathcal{CD}_m$ on the morphisms of the category $\ukob_R(\omega^{*})$, i.e. on the chain maps.

So let $f\colon(\mathrm{Co}_1,d^1_*)\to(\mathrm{Co}_2,d^2_*)$ be a chain map. Every chain map itself is a sequence of matrices of decorated cobordisms $f=(\dots,f_i,f_{i+1},\dots)$. The operation on the chain map should just insert the cobordisms topological, i.e. without changing gluing numbers, indicators or signs. Note that we do not run into ambiguities since the decorations are defined for the closure for the fixed *-marker.

We denote an entry of the matrix $f_i$ with abuse of notation as $f_i$ and similar for $d^{1,2}$. Hence, we have to check that the commutativity of the faces
\[
\xymatrix{
\cdots\ar[r] & \mathrm{Co}_1^{i}\ar[r]^{d_1^{i}}\ar[d]^{f_i} & \mathrm{Co}_1^{i+1}\ar[r]\ar[d]^{f_{i+1}} & \cdots\\
\cdots\ar[r] & \mathrm{Co}_2^{i}\ar[r]^{d_2^{i}} & \mathrm{Co}_2^{i+1}\ar[r] & \cdots.
}
\]
is preserved. The operation on the $d^*_{1,2}$ is defined as
\[
\mathcal{CD}_m(d^i_1)=\alpha_1^{i+1}\circ d^i_1\circ\beta_1^{i}\;\;\text{and}\;\;\mathcal{CD}_m(d^i_2)=\alpha_2^{i+1}\circ d^i_2\circ\beta_2^{i},
\]
where $\alpha^*_*,\beta^*_*$ are the decoration, indicator and signs changing morphisms.

The rest is a case-by-case check using the relations in Lemma \ref{lemm-calc} and the equation $f_{i+1}\circ d_1^i=d_2^{i+1}\circ f_i$.
\begin{itemize}
\item If one of the two sides of the equation $f_{i+1}\circ d_1^i=d_2^{i+1}\circ f_i$ has a $0$-indicator, then the other has also a $0$-indicator. Hence, in this case everything works out by the commutation relations in Lemma \ref{lemm-calc}.
\item Since everything works out topological and the indicator changes are spread if strings get glued together, we see that if $\alpha^*_*,\beta^*_*$ are only indicator changes, then, by commutativity of the indicator changes, the equation is preserved.
\item An analogous argument works if $\alpha^*_*,\beta^*_*$ are only sign changes.
\item Hence, lets assume that $\alpha^*_*,\beta^*_*$ are only decoration changes, i.e. tensors of $\Phi^-_+$ and $\Id^+_+$, and we do not have any $0$-indicators. Then one has different cases to check, i.e. based on the indicators for $d_1,d_2$ and $f$. They all work analogous, so we only do one case, i.e. only $d_1^i$ and $f_i$ should have negative indicators. Then we have by the commutation relations
\begin{align*}
f_{i+1}\circ \mathcal{CD}_m(d^i_1) &= f_{i+1}\circ \alpha_1^{i+1}\circ d^i_1\circ\beta_1^{i}
= f_{i+1}\circ d^i_1\circ \bar{\alpha}_1^{i+1}\circ\beta_1^{i}\\
&= -d^{i+1}_2\circ f_{i}\circ \bar{\alpha}_1^{i+1}\circ\beta_1^{i}
= \alpha_1^{i+1}\circ d^{i+1}_2\circ f_{i}\circ \beta_1^{i},
\end{align*}
where $\bar{\alpha}$ should denote a short hand notation for the one from Lemma \ref{lemm-calc}. Note that the equation $f_{i+1}\circ d_1^i=d_2^{i+1}\circ f_i$ also holds for indicators, i.e. not all combinations of indicators for $d_1,d_2$ and $f$ are possible.
\end{itemize}

(b) The proof of (b) is an analogous argument like in (a), i.e. we define the operation on the homotopies $h_i$ like before and we get the statement with an argument like before and the corresponding statement for tensor products.

(c) This is true, because a surfaces with an $0$-indicator can not be changed to a surfaces with an $\pm 1$-indicator. For an concrete example see Figure \ref{figure2-4}. The upper complex is not homotopy equivalent to the lower complex because we can not change the $0$-indicator. 
\begin{figure}[ht]
  \centering
     \includegraphics[scale=0.7]{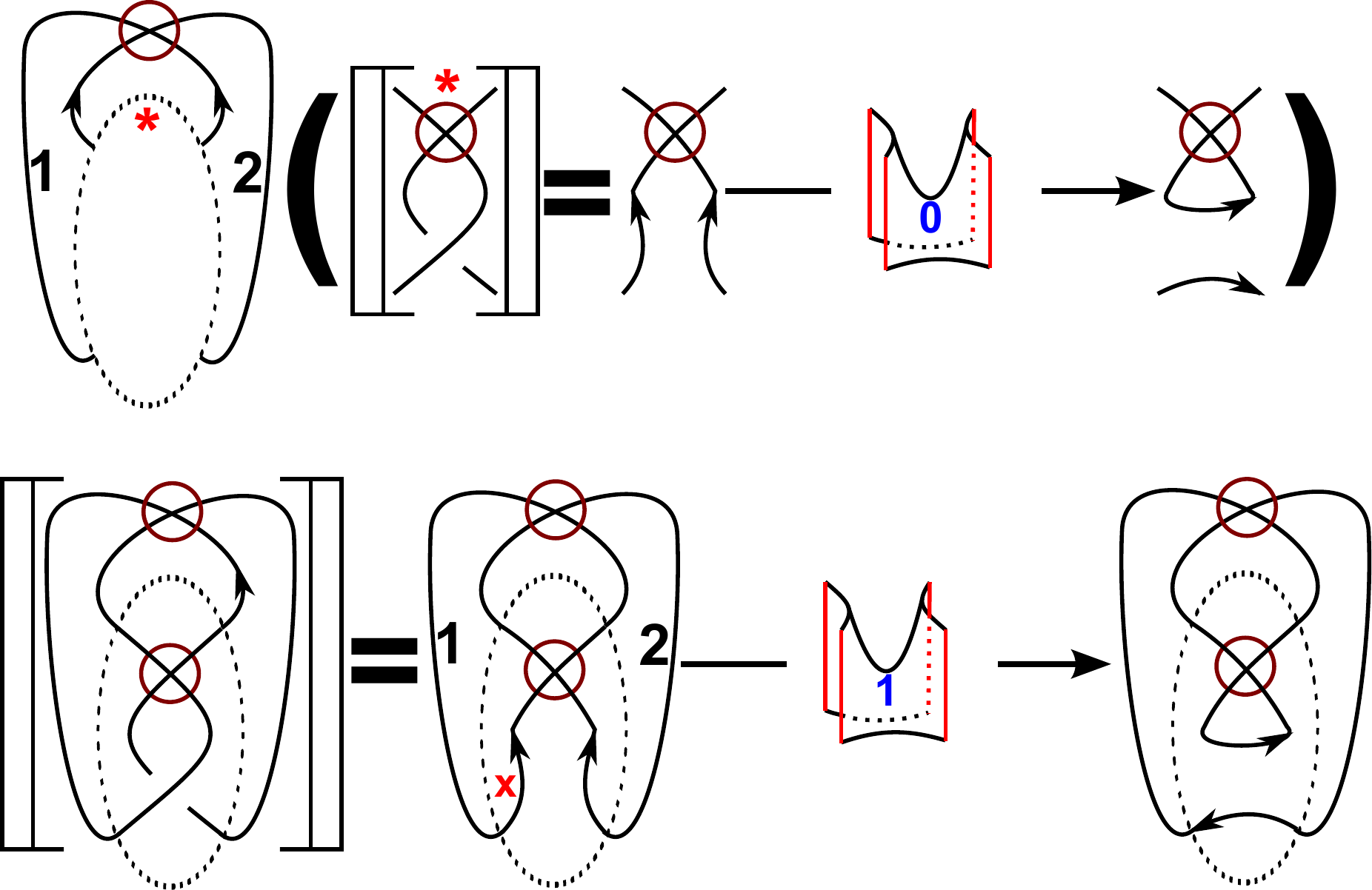}
  \caption{A counterexample. The diagram is not a nice v-tangle diagram.}
  \label{figure2-4}
\end{figure}

(d) This follows because of Lemma \ref{lemma-factorbyisos}, i.e. if a surface with an $0$-indicator appears in the complex before the operation $\mathcal{CD}_m$, then it will appear for both closures. Otherwise the, i.e. if the indicators are $+1,-1$, then we can use the factorisation of Lemma \ref{lemma-factorbyisos}. Note that the proof of Lemma \ref{lemma-factorbyisos} does not depend on Theorem \ref{theo-geoinvarianz}.

Then the equality is a consequence of the definition of the operation on the cobordisms and of an argument like in \cite{bn1}.
\end{proof} 

Note that the next corollary finish the proof of Theorem \ref{theo-geoinvarianz}.

\begin{koro}\label{koro-reidemeister}
Let $R_D^{2,4,6}$ be a v-tangle diagram that looks like a generalised Reidemeister move (see Figure \ref{figure0-1}). Then 
\[
\bn{\mathcal{CD}_m(T_D^{k_0}\times\dots\times T_D^{k_{m-1}})}=\mathcal{CD}_m(\bn{T_D^{k_0}}\times\dots\times\bn{T_D^{k_{m-1}}})
\]
for any operation $\mathcal{CD}_m$ if $T_D^{k_i}$ is either a classical tangle or a Reidemeister-tangle $R_D^{2,4,6}$ for $i\in \{0,\dots,m-1\}$. 
\end{koro}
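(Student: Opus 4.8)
The plan is to read the corollary off from part (d) of Theorem \ref{theo-semiloc}: the bracket $\bn{\cdot}$ is an $\mathrm{NT^{*}}$-morphism of decorated circuit algebras, and by the definition of an $X$-morphism (Definition \ref{defin-cirmor}) this means precisely that $\bn{\cdot}\circ\mathcal{CD}_m$ agrees with $\mathcal{CD}_m\circ(\bn{\cdot}\times\cdots\times\bn{\cdot})$ on every tuple of inputs drawn from $\mathrm{NT^{*}}$. So the whole proof reduces to one observation: each input tangle diagram in the statement --- a classical tangle diagram, or a Reidemeister-tangle diagram $R_D^{2,4,6}$ --- is, together with its *-marker, an element of $\mathrm{NT^{*}}$, i.e. a nice v-tangle diagram. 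Granting this, specialising the $\mathrm{NT^{*}}$-morphism identity to the tuple $(T_D^{k_0},\dots,T_D^{k_{m-1}})$ yields the asserted equality in $\ukob_R(\omega^{*})$, for every operation $\mathcal{CD}_m$.

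It remains to check niceness of the inputs, and here there is essentially nothing to do. For a classical tangle diagram this is already recorded above (every c-tangle diagram is nice). For a Reidemeister-tangle $R_D^{2,4,6}$ there are two cases. If it looks like one of the classical moves RM1, RM2, RM3, it carries no virtual crossings, hence is a c-tangle diagram and therefore nice. If it looks like one of the virtual or mixed moves vRM1, vRM2, vRM3, mRM, then it still consists only of strands running between boundary points and has no closed components at all; in particular it contains no v-circles, so the niceness requirement --- that after some finite sequence of vRM1, vRM2, vRM3, mRM moves and virtualisations every v-circle be negligible --- holds vacuously (take the empty sequence). Thus every input lies in $\mathrm{NT^{*}}$ and the argument is complete.

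In this sense the corollary is a formal consequence of Theorem \ref{theo-semiloc}(d); the real content was already spent there and in Lemma \ref{lemma-factorbyisos} (the fact that for nice diagrams a saddle carries a $0$-indicator for one closure iff it does for the other, so that the non-invertible $0$-identity never obstructs the dot-calculus). The one place where genuine care is still needed is downstream, in using this corollary to finish Theorem \ref{theo-geoinvarianz}: one presents a diagram on which a generalised Reidemeister move is performed as $\mathcal{CD}_m$ applied to the Reidemeister-tangle at the move site together with the remaining pieces, and one must arrange --- via virtualisations and virtual Reidemeister moves, which by Lemma \ref{lemma-everythingfine} change nothing up to chain isomorphism --- that those remaining pieces are classical, so that the present corollary applies; the local chain homotopy equivalence at the move site then propagates by Theorem \ref{theo-semiloc}(b).
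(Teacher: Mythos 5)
Your reduction of the corollary to Theorem \ref{theo-semiloc}(d) is the right skeleton, but the key step -- that every Reidemeister-tangle lies in $\mathrm{NT^{*}}$ -- is wrong for the mRM-tangle, and the paper is explicit about this: its proof reads ``the Reidemeister-tangles, \emph{except the mRM-tangle}, are all part of the set $\mathrm{NT^{*}}$,'' and then disposes of the mRM case by a separate case-by-case check of the two closures. Your argument that niceness ``holds vacuously'' because there are no v-circles rests on a misreading of the definition. The condition in the definition of ``nice'' is that, after some sequence of virtual moves and virtualisations, every \emph{v-crossing} becomes negligible (i.e.\ belongs to a fully internal connected part); this is the notion that is actually defined and the one used in the proof of Lemma \ref{lemma-factorbyisos}. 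The mRM-tangle has two v-crossings sitting on strands that run to the boundary, and no sequence of vRM$1$--$3$, mRM and virtualisations can make those v-crossings part of a fully internal component. So the mRM-tangle is genuinely not nice, and Theorem \ref{theo-semiloc}(d) does not apply to it.

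Because of this, your proof is incomplete: the $\mathrm{NT^{*}}$-morphism property gives you the equality for classical tangles and for the Reidemeister-tangles RM$1$--$3$ and vRM$1$--$3$, but not for mRM. You still need the paper's additional argument: check directly, for each of the two closures of the mRM-tangle (and for both sides of the move), that all cobordisms appearing in the resulting complexes carry $\pm 1$ indicators, so that the factorisation of Lemma \ref{lemma-factorbyisos} goes through by hand even though the diagram is not nice. Without that check, the corollary -- and hence the invariance under mRM in Theorem \ref{theo-geoinvarianz} -- remains unproved. Your downstream comment about how the corollary feeds into Theorem \ref{theo-geoinvarianz} is otherwise fine.
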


\begin{proof}
This is a direct consequence of point (d) of Theorem \ref{theo-semiloc} and the observation that the Reidemeister-tangles, except the mRM-tangle, are all part of the set $\mathrm{NT^{*}}$.

That the statement also works if one of the $T_D^{k_i}$ is a mRM-tangle is a case-by-case check for the two different closures of a mRM-tangle (both sides), i.e. in all cases the appearing cobordisms have $+1,-1$ indicators.
\end{proof}

\begin{bem}\label{rem-gradings2}
Like before and like in the paper of D.~Bar-Natan \cite{bn1}, the whole construction works with gradings, too.

But since this is a straightforward generalisations of the notions in \cite{bn1}, we skip the details here. 
\end{bem}

\section{An application: Degeneration of Lee's variant}\label{leedeg}
In the whole section let $R$ denote a commutative and unital ring such that $2$ is invertible, e.g. $R=\zet\left[\frac{1}{2}\right]$. Moreover, through the whole section we denote the geometric complex as $\bn{-}$ and a corresponding algebraic version, i.e. an uTQFT (see \cite{tub}) applied to the geometric complex, as $\mathcal F(\bn{-})$ or short as $\mathcal F(-)$, e.g. we denote Lee's version as
\[
\mathcal F_{\mathrm{Lee}}(-)=\mathcal F(\bn{-}_{\mathrm{Lee}}).
\]
Note that, in order for the signs to work out correct, we have to fix a marker position. In the whole section we, by convention, say that the marker for $\slashoverback$ is at the left side and for $\backoverslash$ is at the top.

Recall (see \cite{lee} for the classical and \cite{tub} for the virtual case) that \emph{Lee's variant}, i.e. the algebraic version, for v-links is given by the following maps and the filtered module $A=A_{\mathrm{Lee}}=R[X]/(X^2=1)$.
\[
m^{++}_+\colon A\otimes A\to A,\;\begin{cases}1\otimes 1\mapsto 1, & X\otimes X\mapsto 1 \\ 1\otimes X\mapsto X, & X\otimes 1\mapsto X\end{cases},\;\Phi^-_+\colon A\to A,\; 1\mapsto 1,\;X\mapsto -X
\]
and
\[
\Delta_{++}^+\colon A\to A\otimes A,\;\begin{cases}1\mapsto 1\otimes X+X\otimes 1\\ X\mapsto 1\otimes 1+X\otimes X\end{cases},\;\theta\colon A\to A,\; 1\mapsto 0,\;X\mapsto 0.
\]
Moreover, recall that the geometric picture of Lee's variant is given by the \emph{dot-relations} in Figure \ref{figure3-1} with $t=1$, while the graded case of the Khovanov complex is $t=0$. Note that $\frac{1}{2}\in R$ allows us to use the \emph{dot-relation} in Figure \ref{figure3-1} instead of the local relations of Figure \ref{figure0-2}. We give an example of the Lee complex of a v-knot in Example \ref{beisp-leedeg}.

\begin{figure}[ht]
  \centering
     \includegraphics[width=0.9\linewidth]{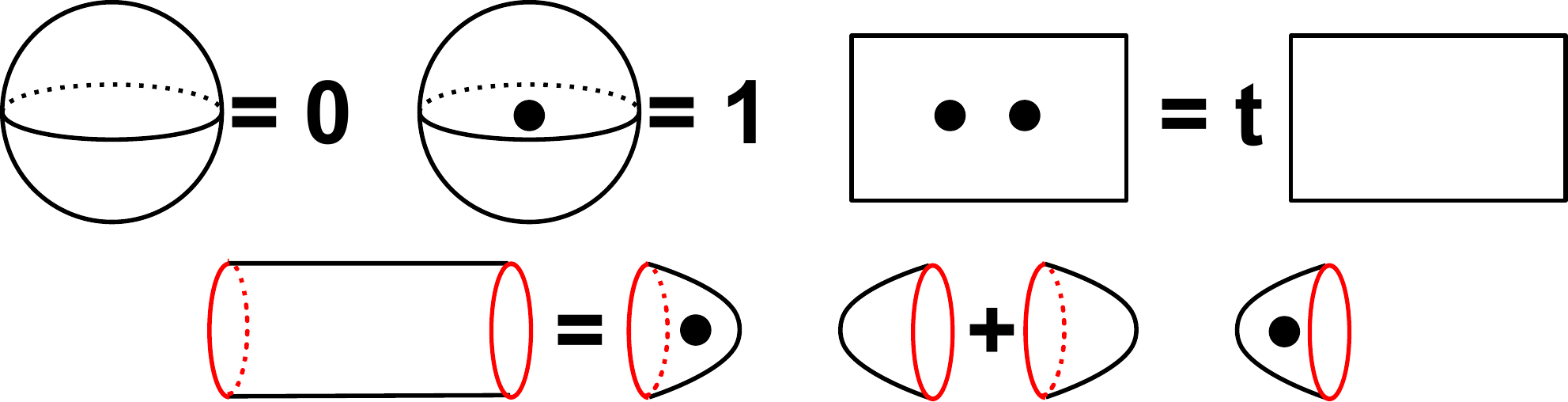}
  \caption{The dot-relations.}
  \label{figure3-1}
\end{figure}

Lee's variant has a remarkable property in the classical case, i.e. E.S.~Lee showed that her variant just ``counts'' the number of components of the c-link, i.e. she showed that (for $R=\rat$) the homology of an $n$-component link $L$ is
\[
H(\mathcal F_{\mathrm{Lee}}(L))\iso\bigoplus_{2^n}\rat.
\]
So on the first hand this seems to be a boring invariant. But J.~Rasmussen (see \cite{ra}) used this degeneration in a masterfully way to define the \emph{Rasmussen invariant} of a c-knot.

Therefore a natural question is if this degeneration of Lee's variant is still true for v-links. In this section we show that this is indeed the case.

Note that this is a unexpected result since $\theta=0$ for $2^{-1}\in R$ (see the relations in Definition \ref{defin-category}). Hence, there are ``tons'' of $0$-morphisms in the complex. But these $0$-morphisms also come with isomorphism ``in a lot'' cases. The following example for the Lee complex of a v-knot is a blueprint of this effect.

\begin{beisp}\label{beisp-leedeg}
Consider the diagram of the virtual trefoil $L_D$ given in Figure \ref{figure4-1}. In this example the number of negative crossings is zero, i.e. the leftmost object is the $0$-degree of the chain complex.
\begin{figure}[ht]
  \centering
     \includegraphics[width=0.9\linewidth]{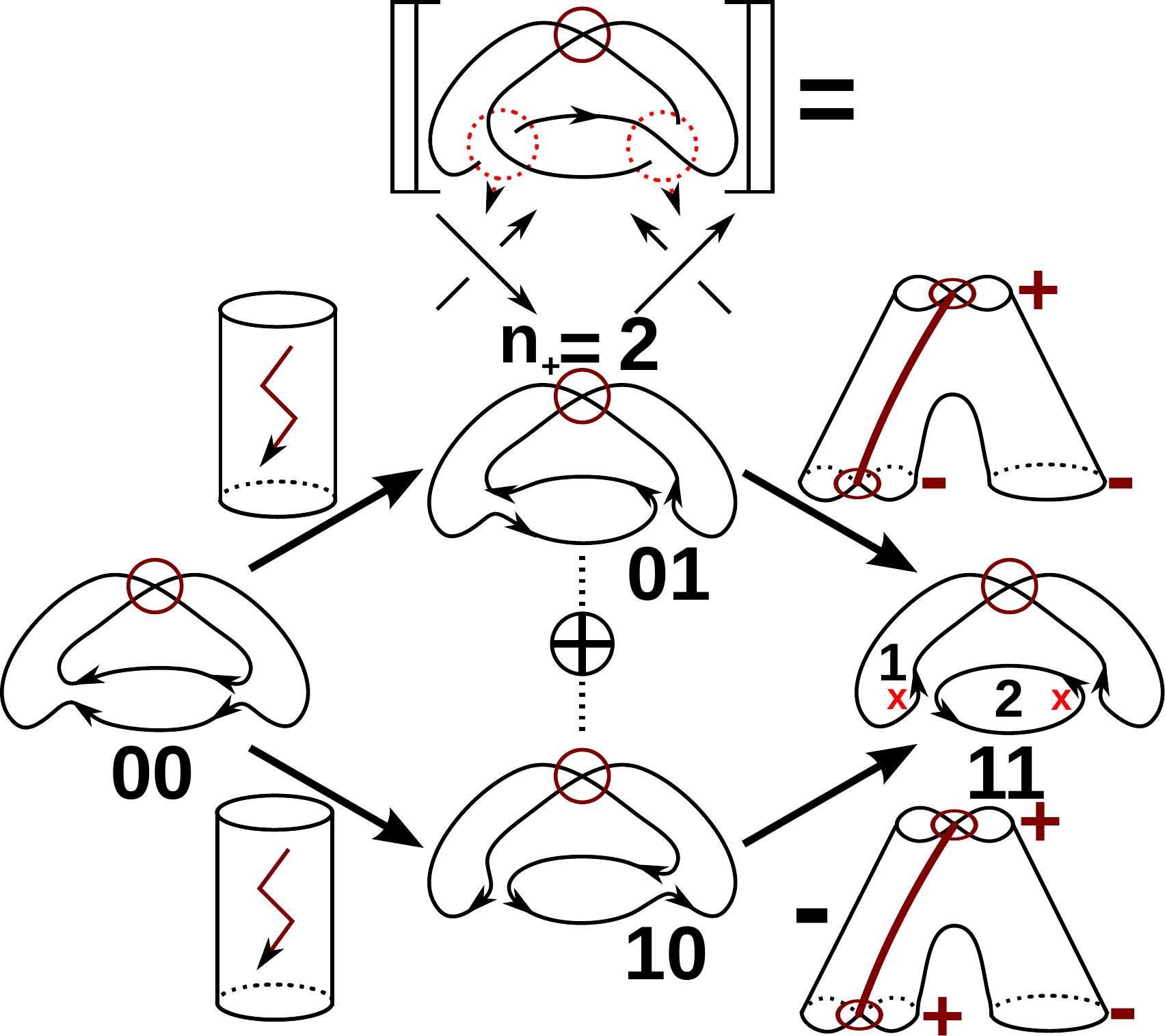}
  \caption{The Lee complex of the v-trefoil. Note that the first map is a $0$-morphism, but the second is an isomorphism.}
  \label{figure4-1}
\end{figure}
Let us consider $R=\rat$. Then $\theta=0$ and therefore the first two maps are $0$-morphisms. But note that the two right morphisms are not the same, i.e. one is $\Delta^+_{--}$ and the other is $\Delta^+_{+-}$. So on the algebraic level we get, using the maps from before, the following complex if we fix $B_1=\{1,X\}$ as a basis for $A$ and $B_2=\{1\otimes 1,1\otimes X,X\otimes 1,X\otimes X\}$ for $A\otimes A$.
\[
\begin{xy}
  \xymatrix{
      A\ar[rrr]^{\begin{pmatrix}
      0 & 0 \\
      0 & 0 \\
      0 & 0 \\
      0 & 0 
      \end{pmatrix}} &  & & A\oplus A\ar[rrr]^{\begin{pmatrix}
      0 & 1 & 0 & -1\\
      -1 & 0 & -1 & 0\\
      -1 & 0 & 1 & 0\\
      0 & 1 & 0 & 1
      \end{pmatrix}} & & & A\otimes A.
      }
\end{xy}
\]
An easy calculations shows that the second matrix is an isomorphism. Hence, the homology of the virtual trefoil is only non-trivial for $k=0$, i.e.
\[
H_k(\mathcal F_{\mathrm{Lee}}(L_D))=\begin{cases}\rat\oplus\rat, & \text{if}\;k=0,\\ 0, &\text{else}.\end{cases}
\]

Another example is the v-knot in Figure \ref{figure4-2}, e.g. with the pictured orientation and numbering of the circles from left to right, the three outgoing morphisms from resolution $000$ to $001$, $010$ and $100$ are (up to, in this case, not important signs) the morphisms $m^{+-}_+$, $m^{++}_-$ and $m^{--}_+$, i.e. one alternating and two non-alternating. Hence, the kernel is trivial. The reader should check that the rest also works out in the same fashion.
\end{beisp}
The approach (we follow D.~Bar-Natan and S.~Morrison \cite{bnsm}) to show that the degeneration is still true is the following. First we define two orthogonal idempotents in our category, which we call \emph{down and up}. We can look at the \emph{Karoubi envelope} of our category, denoted $\KAR(\ukob_R(k))$.

The idea of the Karoubi envelope is to find a ``completion'' of a category such that every idempotent splits. It is named after the french mathematician M.~Karoubi, but it already appears in an earlier work by P.~Freyd in \cite{pf}.

Then we show that the geometric complex of a simple crossing (as a v-tangle), if considered in $\KAR(\ukob_R(k))$, is homotopy equivalent to a very simple complex with only $0$-morphisms. After that we use the local construction from Section \ref{circuit} to finish the proof.

Moreover, we proof the following interesting result of the number of decorations of v-link resolution with the ``colours'' down and up. Note that we call an oriented resolution $\mathrm{Re}$ of a v-link diagram \emph{non-alternating} if it is of the form \uu\,or \dd\, at the corresponding positions of the saddles. Recall that all the v-link diagrams should be oriented and that such a diagram with $n\in\nat_{>0}$ components has $2^n$ different orientations $\mathrm{Or}_1,\dots,\mathrm{Or}_{2^n}$.

We note that one can also colour the resolutions with honest colours, say red and green, in such a way that the colour changes at every v-crossing. We call this a \emph{colouring of a v-link resolution} if at the corresponding saddle-position the colours are different, i.e. (red,green) or (green,red). The reader should compare this with the coloured dots in Figure \ref{figure0-main}.
\begin{theo}[Non-alternating resolutions]\label{theo-nonalternating}
Let $L_D$ denote a v-link diagram with $n\in\nat_{>0}$ components. There are bijections of sets
\begin{align*}
\{\mathrm{Or}\mid \mathrm{Or}\;\text{is an orientation of}\;L_D\}&\simeq\{\mathrm{Re}\mid \mathrm{Re}\;\text{is a non-alternating resolution of}\;L_D\}\\
&\simeq\{\mathrm{Co}\mid \mathrm{Co}\,\text{is a coloured resolution of}\;L_D\}.
\end{align*}
If $L_D$ is a v-knot, i.e. $n=1$, then the two non-alternating resolutions are in homology degree $0$.
\end{theo}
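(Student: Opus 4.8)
The plan is to realise all three sets as parametrising the same local data at the classical crossings, build two explicit bijections, and then read off the degree statement by a direct count. Recall that the set of orientations of $L_D$ has $2^n$ elements, so I am aiming for three sets of that size.

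\emph{Orientations versus non-alternating resolutions.} Given an orientation $\mathrm{Or}$ of $L_D$, resolve every classical crossing by its orientation-compatible (Seifert) smoothing and keep the induced orientation; call the result $\gamma(\mathrm{Or})$. The local point I would record is that at a classical crossing of an oriented diagram the two ``incoming'' half-edges are always cyclically adjacent: each strand runs through the crossing, so its two half-edges are opposite, whence the in/out pattern around the vertex is of type $(\mathrm{in},\mathrm{in},\mathrm{out},\mathrm{out})$ and never alternating. The Seifert smoothing joins the adjacent incoming ends to the adjacent outgoing ends, so the two smoothed arcs carry parallel orientations; up to the rotation used to reach standard form this is exactly the picture \uu or \dd, never \ud or \du. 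Hence $\gamma(\mathrm{Or})$ is non-alternating and $\mathrm{Or}\mapsto\gamma(\mathrm{Or})$ is well defined. Conversely, from a non-alternating oriented resolution $\mathrm{Re}$ one recovers an orientation of $L_D$ by transporting, at every classical crossing, the orientations of the two local arcs back onto the two strands of $L_D$: because the local picture is \uu or \dd (the two ``in'' ends adjacent) and the two strands of $L_D$ join opposite pairs of ends, each strand receives one ``in'' and one ``out'' end and so is coherently oriented; these local choices agree along every edge of $L_D$ and through the virtual crossings, hence glue to a global orientation $\mathrm{Or}(\mathrm{Re})$. The two assignments are visibly mutually inverse.

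\emph{Colourings.} A colouring in the sense of the statement is a two-colouring of the edges of the resolution whose colour flips across every v-crossing; such colourings exist because any two closed curves in $\real^2$ meet transversally in an even number of points, so the flips accumulated while traversing any component of the resolution sum to $0$ in $\zet/2$. The admissible ones — those with different colours at every saddle-position — are exactly the non-alternating resolutions seen through a fixed reference colouring, with green / red recording agreement / disagreement just as the coloured dots of Figure \ref{figure0-main}; composing with the first bijection gives the chain of bijections in the statement. I expect the v-crossing bookkeeping (that the flip convention is globally consistent and hides no parity obstruction) to be the main technical nuisance here, but it is the same reference/dot bookkeeping already used for the dot-calculus in Section \ref{circuit}.

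\emph{Degree of the two resolutions of a v-knot.} Let $L_D$ be a v-knot and $\mathrm{Or}$ one of its two orientations. By the usual convention $\gamma(\mathrm{Or})$ carries a $1$ exactly at the crossings that are negative for $\mathrm{Or}$, so its length is that number. For a one-component diagram both strands at each crossing lie on the single component, so reversing the orientation reverses both strands and leaves every crossing sign unchanged; hence this number is $n_-$, the normalising constant of the complex. By Definition \ref{defin-geocomplex} a resolution of length $i$ lies in homological degree $i-n_-$, so $\gamma(\mathrm{Or})$ lies in degree $n_--n_-=0$ for either orientation, and by the first bijection these are precisely the two non-alternating resolutions. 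The chief obstacle overall is the local verification of the \uu / \dd dichotomy against the paper's standard-form and sign conventions; once that is pinned down the remaining steps are routine.
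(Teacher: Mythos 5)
Your treatment of the first bijection and of the degree statement is essentially the paper's argument: the oriented (Seifert) smoothing sends an orientation to the non-alternating resolution, and the inverse transports local orientations back through the crossings. You add a small but useful clarification that the paper leaves implicit — that for a one-component diagram orientation reversal fixes every crossing sign, so both non-alternating resolutions live in degree $n_--n_-=0$.

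The gap is in the second bijection. You correctly observe that a colouring whose colour flips at every v-crossing always exists, by the mod-$2$ intersection-number argument. But to land in the set of \emph{coloured resolutions} in the sense of Theorem \ref{theo-nonalternating} you also need the colours to \emph{differ} at every saddle, and you never establish which resolutions admit such a colouring nor construct one. Instead you appeal to ``a fixed reference colouring, with green / red recording agreement / disagreement just as the coloured dots of Figure \ref{figure0-main}.'' That is a different gadget: the dots of Figure \ref{figure0-main} record whether a strand's orientation agrees with a reference, and a strand's orientation does \emph{not} change across a v-crossing, so agreement/disagreement with a reference is constant across a v-crossing. It therefore cannot satisfy the defining condition that the colouring flips there. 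The paper instead takes a checker-board colouring of the complement of the resolution (which exists since the resolution is a $4$-valent planar graph) and colours each arc by the right-hand rule relative to the orientation of the non-alternating resolution; this does flip across each v-crossing (the checker-board colour on the thumb side alternates there) and does give distinct colours at exactly the non-alternating saddles, yielding the bijection. That construction, or an equivalent one, is what your argument is missing; as written, the chain of bijections is not established.
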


\begin{proof}
With a slight abuse of notation let us denote the first two sets as $\mathrm{Or}$ and $\mathrm{Re}$. To show the existence of a bijection we construct an explicit map $f\colon\mathrm{Or}\to\mathrm{Re}$ and its inverse.

Given an orientation $\mathrm{Or}$ of the v-link diagram $L_D$, the map $f$ should assign the resolution $\mathrm{Re}$ which is obtained by replacing every oriented crossing of the form $\overcrossing$ and $\undercrossing$ to \uu. This is clearly an injection.

Now, given a non-alternating resolution $\mathrm{Re}$, we assign to it an orientation of $L_D$ in the following way. At any non-alternating part of the form \uu\,and \dd\,replace the non-alternating part with the corresponding oriented crossing $\overcrossing$ and $\undercrossing$ (or a rotation in the \dd\, case).

Note that both maps are well-defined (by definition of oriented v-link and oriented resolution).

These two maps are clearly inverses and for a v-knot the corresponding non-alternating resolutions are in homology degree $0$ since all $n_+$-crossings are resolved $0$ and all $n_-$-crossings are resolved $1$ in this procedure.   

To see the second bijection use a checker-board colouring of the v-link diagram. Then start at any point of the non-alternating resolution and use the right-hand rule, i.e. the index finger follows the orientation and the string should get the colour of the face on the side of the thumb.
\end{proof}
\begin{beisp}\label{beisp-nonalt}
Let $L_D$ be the v-knot diagram in Figure \ref{figure4-2}. Then only the $011$ resolution of the v-knot diagram allows a non-alternating resolution. Moreover, the orientation of the diagram induces this non-alternating resolution by replacing the three crossings with \dd,\,\uu\,and \uu. The other orientation induces the non-alternating resolution \uu,\,\dd\,and \dd. Note that, by construction, these resolution are in homology degree $0$. A computation like in Example \ref{beisp-leedeg} shows that these two non-alternating resolutions give the only two generators of the homology, i.e.
\[
H_k(\mathcal F_{\mathrm{Lee}}(L_D))=\begin{cases}\rat\oplus\rat, & \text{if}\;k=0,\\ 0, &\text{else}.\end{cases}
\]
\begin{figure}[ht]
  \centering
     \includegraphics[width=0.9\linewidth]{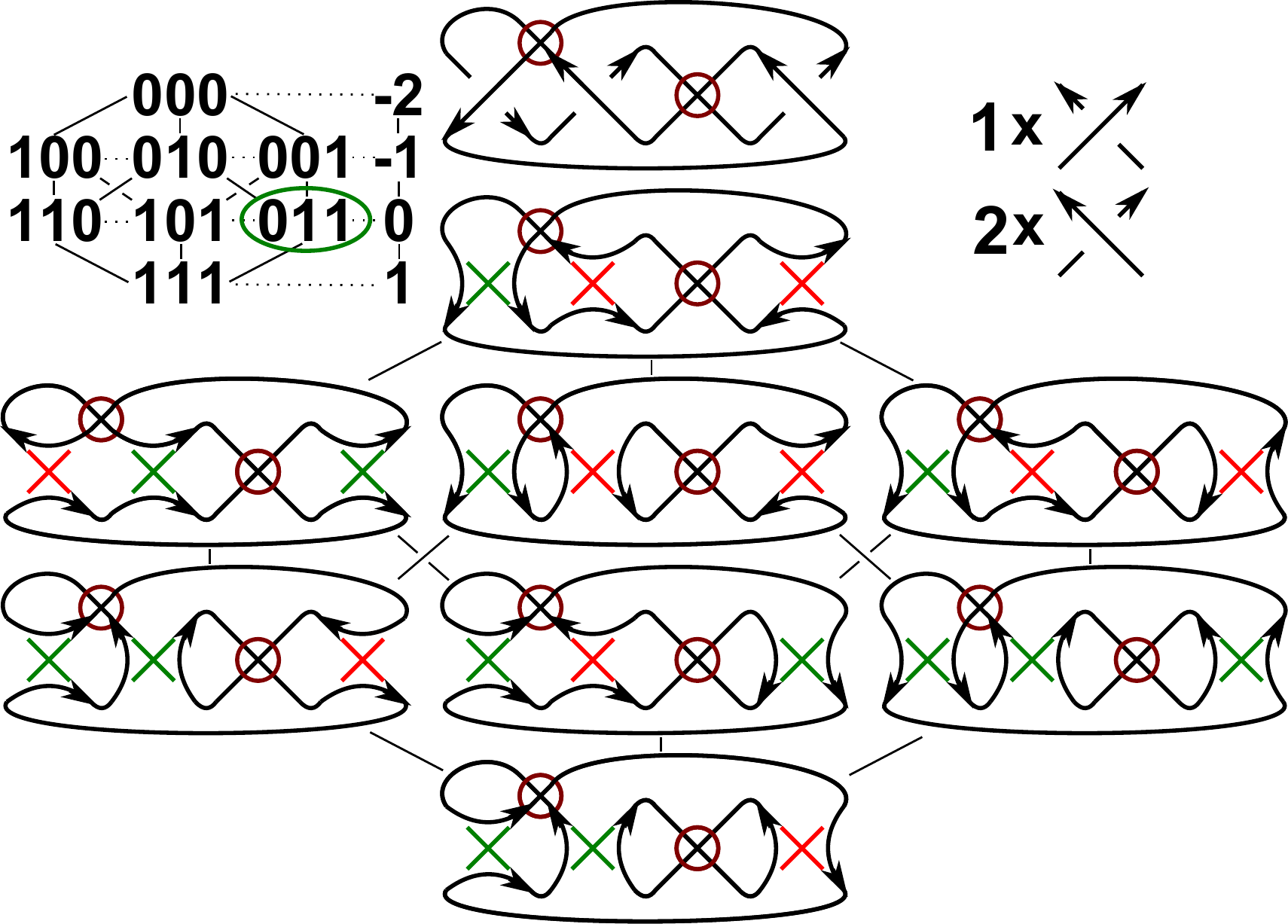}
  \caption{There are exactly two non-alternating resolutions, i.e. the one pictured and the one with all orientations reversed.}
  \label{figure4-2}
\end{figure}
\end{beisp}
We recall the motivation, definition and some basic properties of the \emph{Karoubi envelope} of a pre-additive category $\mathcal C$. We denote the envelope as $\KAR(\mathcal C)$.

For any category the notion of an idempotent morphisms, i.e. an arrow with $e\circ e=e$, makes sense. Moreover, in a pre-additive category the notion $\Id-e$ also makes sense. A classical trick in modern algebra is to use an idempotent, e.g. in an algebra $A$, to split the algebra into
\[
A\simeq eA\oplus (1-e)A.
\]
Hence, it is a natural question to ask if on can ``split'', given an idempotent $e$, an object of a category $\mathcal O$ in the same way, i.e.
\[
\mathcal O\simeq \mathrm{im}(e)\oplus \mathrm{im}(1-e).
\]
The main problem is that the notion of an image of an arrow could not exist in an arbitrary category. The Karoubi envelope is an extension of a category such that for a given idempotent $e$ the notions $\mathrm{im}(e)$ makes sense. Therefore one can ``split'' a given object in the Karoubi envelope that could be indecomposable in the category itself.

\begin{defin}\label{defin-karoubi}
Let $\mathcal C$ be a category and let $e\colon\mathcal O\to\mathcal O$ be an idempotent in $\mathrm{Mor}(\mathcal C)$. The \emph{Karoubi envelope of $\mathcal C$}, denoted $\KAR(\mathcal C)$, is the following category.
\begin{itemize}
\item Objects are ordered pairs $(\mathcal O,e)$ of an object $\mathcal O$ and an idempotent $e$ of $\mathcal C$.
\item Morphisms $f\colon(\mathcal O,e)\to(\mathcal O',e')$ are all arrows $f\colon\mathcal O\to\mathcal O'$ of $\mathcal C$ such that the equation $f=f\circ e=e'\circ f$ holds.
\item Compositions are defined in the obvious sense. The identity of an object is $e$ itself.
\end{itemize}
It is straightforward to check that this is indeed a category. We denote an object $(\mathcal O,e)$ as $\mathrm{im}(e)$, the \emph{image} of the idempotent $e$. Moreover, we identify the objects of $\mathcal C$ with their image via the embedding functor
\[
\iota\colon\mathcal C\to\KAR(\mathcal C),\;\mathcal O\mapsto (\mathcal O,\Id).
\] 
\end{defin}
Note that if $\mathcal C$ is pre-additive, then $1-e$ is also an projection and, under the identification above, we can finally write
\[
\mathcal O\simeq \mathrm{im}(e)\oplus \mathrm{im}(1-e).
\]
The following proposition is well-known (see e.g. \cite{bnsm}). The propositions allows us to shift the problem if two chain complexes are homotopy equivalent to the Karoubi envelope. Recall that $\kom(\mathcal C)$ denotes the category of formal chain complexes.
\begin{satz}\label{prop-karoubi}
Let $(C,d),(C',d')$ be two objects, i.e. formal chain complexes, of $\kom(\mathcal C)$. If the two objects are homotopy equivalent in $\kom(\KAR(\mathcal C))$, then the two objects are also homotopy equivalent in $\kom(\mathcal C)$.\qed
\end{satz}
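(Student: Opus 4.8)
The plan is to exploit that the embedding functor $\iota\colon\mathcal C\to\KAR(\mathcal C)$ of Definition~\ref{defin-karoubi} is \emph{fully faithful}, and that a homotopy equivalence of chain complexes is a piece of data -- two chain maps together with two chain homotopies -- all of whose entries are morphisms and all of whose defining relations are equations between morphisms. Since for complexes coming from $\kom(\mathcal C)$ the relevant $\Hom$-sets do not change when we pass to $\KAR(\mathcal C)$, any such equivalence realised in $\kom(\KAR(\mathcal C))$ is automatically already realised in $\kom(\mathcal C)$.

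First I would verify that $\iota$ is fully faithful. By Definition~\ref{defin-karoubi} a morphism $(\mathcal O,\Id)\to(\mathcal O',\Id)$ in $\KAR(\mathcal C)$ is an arrow $f\colon\mathcal O\to\mathcal O'$ of $\mathcal C$ subject to $f=f\circ\Id=\Id\circ f$, i.e.\ subject to no condition; composition and identities are inherited from $\mathcal C$, so $\Hom_{\KAR(\mathcal C)}\bigl((\mathcal O,\Id),(\mathcal O',\Id)\bigr)=\Hom_{\mathcal C}(\mathcal O,\mathcal O')$, with matching zero morphisms. Hence the induced functor $\kom(\iota)\colon\kom(\mathcal C)\to\kom(\KAR(\mathcal C))$ is fully faithful as well: a chain map $\iota(C)\to\iota(C')$ is a degreewise family of arrows $f^i\colon C^i\to (C')^i$ of $\mathcal C$, and the chain-map relations $(d')^i\circ f^i=f^{i+1}\circ d^i$ are literally the same equations in $\mathcal C$; similarly a chain homotopy is a family $h^i\colon C^i\to (C')^{i-1}$ of arrows of $\mathcal C$, and the homotopy relation $f^i-g^i=(d')^{i-1}\circ h^i+h^{i+1}\circ d^i$ is an equation in $\mathcal C$. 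In short, $\kom(\mathcal C)$ sits inside $\kom(\KAR(\mathcal C))$ as a full subcategory, compatibly with chain homotopy.

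Now I would conclude. Suppose $(C,d)$ and $(C',d')$ are homotopy equivalent in $\kom(\KAR(\mathcal C))$, witnessed by chain maps $F,G$ and chain homotopies $H,H'$ with $G\circ F-\Id_{\iota(C)}=d\circ H+H\circ d$ and $F\circ G-\Id_{\iota(C')}=d'\circ H'+H'\circ d'$. By the full faithfulness above, $F$ and $G$ arise from chain maps $f$ and $g$ of $\kom(\mathcal C)$, the homotopies $H,H'$ are families of arrows of $\mathcal C$, and the two displayed relations (together with $d'\circ f=f\circ d$ and $d\circ g=g\circ d$) are all equalities in $\mathcal C$. Therefore $(f,g,H,H')$ is already a homotopy equivalence in $\kom(\mathcal C)$, which is the assertion. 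Equivalently one may phrase the last step as: a fully faithful functor reflects isomorphisms, applied to the functor of homotopy categories induced by $\kom(\iota)$ (homotopy equivalences are exactly the isomorphisms in those categories).

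I do not expect a real obstacle here. The only point needing a moment's care is to observe that the idempotents attached to the objects $\iota(C)^i=(C^i,\Id)$ are genuinely the identity endomorphisms, so that the $\Hom$-set computation applies verbatim, and that ``homotopy equivalence in $\kom(\KAR(\mathcal C))$'' really does unpack into purely $\mathcal C$-level data -- both of which are immediate from Definition~\ref{defin-karoubi}.
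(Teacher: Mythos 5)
Your proof is correct. The paper states the proposition without proof (the ``\verb|\qed|'' with a pointer to \cite{bnsm} as ``well-known''), and your argument -- that $\iota$ is fully faithful, hence $\kom(\iota)$ is fully faithful and preserves/reflects chain homotopies, so a homotopy equivalence between $\iota(C)$ and $\iota(C')$ unpacks entirely into $\mathcal C$-level data -- is precisely the standard argument used in that reference.
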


We define the two orthogonal idempotents $\mathrm{u},\mathrm{d}$ next and show some basic, but very important, properties.

We note that we call the idempotents \emph{``down and up''}. The reader should be careful not to confuse them with the orientations on the resolutions or the colourings of Theorem \ref{theo-nonalternating}, i.e. latter colours change at v-crossings but ``down and up'' do not change. Note that (e) is very important in the following.

\begin{defin}\label{defin-idempotent}
We call the two cobordism in Figure \ref{figure4-3} the \emph{``down and up'' idempotents}. We denote them as $\mathrm{d}$ and $\mathrm{u}$.
\begin{figure}[ht]
  \centering
     \includegraphics[width=0.5\linewidth]{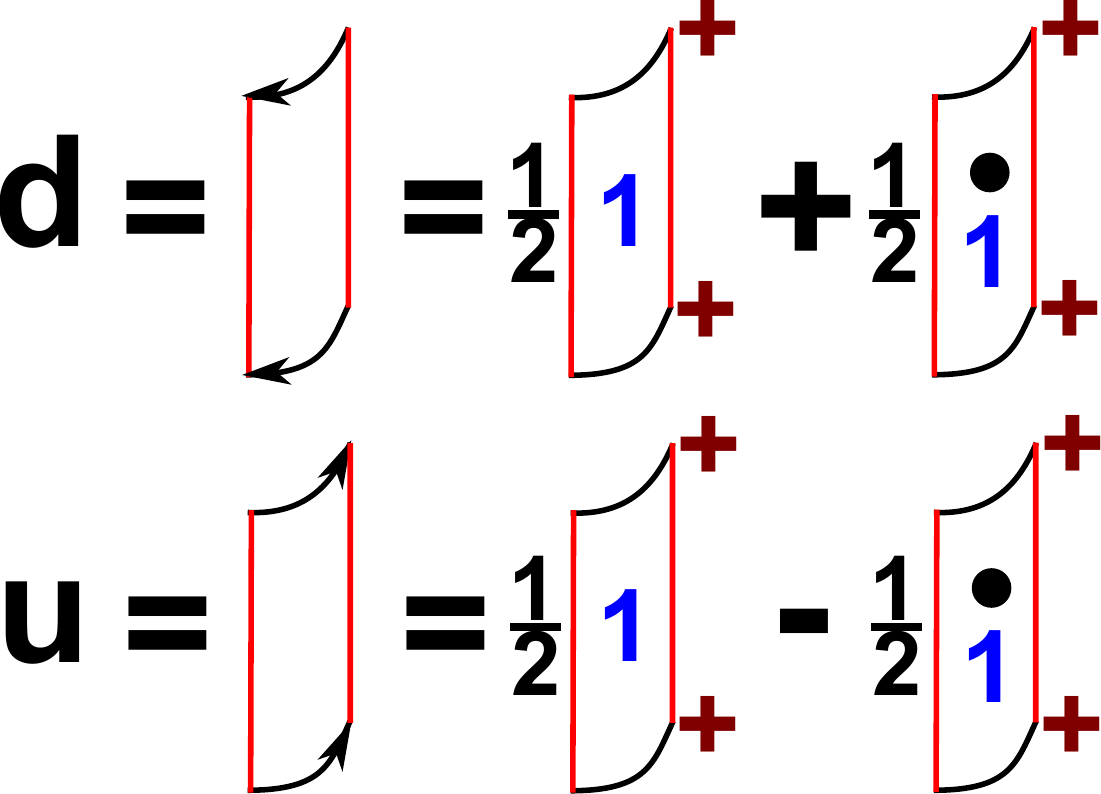}
  \caption{The two idempotents up and down.}
  \label{figure4-3}
\end{figure}
\end{defin}

\begin{lemm}\label{lemm-idem}
The cobordisms $\mathrm{d},\mathrm{u}$ satisfy the following identities.
\begin{enumerate}
\item[(a)] $\mathrm{d}^2=\mathrm{d}$ and $\mathrm{u}^2=\mathrm{u}$ (idempotent).
\item[(b)] $\mathrm{d}\circ\mathrm{u}=0=\mathrm{u}\circ\mathrm{d}$ (orthogonal).
\item[(c)] $\mathrm{d}+\mathrm{u}=\Id$ (complete).
\item[(d)] $\Id_{\mathrm{dot}}\circ\mathrm{d}=\mathrm{d}$ and $\Id_{\mathrm{dot}}\circ\mathrm{u}=-\mathrm{u}$ (Eigenvalues).
\item[(e)] $\Phi^-_+\circ\mathrm{d}=\mathrm{u}\circ\Phi^-_+$ and $\mathrm{d}\circ\Phi^-_+=\Phi^-_+\circ\mathrm{u}$ (change of orientations).
\item[(f)] $[\mathrm{d},\Phi^-_+]=\Id(1)_{\mathrm{dot}}=-[\mathrm{u},\Phi^-_+]$ (Commutator relation).
\end{enumerate}
\end{lemm}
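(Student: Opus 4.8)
The plan is to work in the local endomorphism ring of a single decorated $c$-string (so in $\ucob_R(k)_l$, where $\tfrac12\in R$ is available by the standing assumption of this section), express $\mathrm{d}$ and $\mathrm{u}$ through the dotted identity cobordism $\Id_{\mathrm{dot}}$, and deduce all of (a)--(f) from two input relations together with $R$-linear algebra. Reading off Figure \ref{figure4-3} (with the sign choice that will be pinned down by (d)), the idempotents are
\[
\mathrm{d}=\tfrac12\bigl(\Id^+_+ + \Id_{\mathrm{dot}}\bigr),\qquad \mathrm{u}=\tfrac12\bigl(\Id^+_+ - \Id_{\mathrm{dot}}\bigr).
\]
The two input relations are (i) $\Id_{\mathrm{dot}}\circ\Id_{\mathrm{dot}}=\Id^+_+$ and (ii) $\Phi^-_+\circ\Id_{\mathrm{dot}}=-\,\Id_{\mathrm{dot}}\circ\Phi^-_+$. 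Relation (i) is immediate: two dotted cylinders compose to a cylinder with two dots, and the dot-relation of Figure \ref{figure3-1} with $t=1$ replaces it by the undotted cylinder. Relation (ii) is the one point that is not purely formal; I will come back to it.

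Granting (i) and (ii), everything else is bookkeeping. For (a)--(c) one expands $\mathrm{d}^2$, $\mathrm{u}^2$, $\mathrm{d}\circ\mathrm{u}$, $\mathrm{u}\circ\mathrm{d}$ and $\mathrm{d}+\mathrm{u}$ and uses (i) and $\tfrac12\cdot2=1$. For (d), $\Id_{\mathrm{dot}}\circ\mathrm{d}=\tfrac12(\Id_{\mathrm{dot}}+\Id^+_+)=\mathrm{d}$ and $\Id_{\mathrm{dot}}\circ\mathrm{u}=\tfrac12(\Id_{\mathrm{dot}}-\Id^+_+)=-\mathrm{u}$ by (i); note (d) is also what forces $\mathrm{d}$ (rather than $\mathrm{u}$) to be the summand carrying $+\Id_{\mathrm{dot}}$. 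For (e), $\Phi^-_+\circ\mathrm{d}=\tfrac12(\Phi^-_+ + \Phi^-_+\circ\Id_{\mathrm{dot}})=\tfrac12(\Id^+_+-\Id_{\mathrm{dot}})\circ\Phi^-_+=\mathrm{u}\circ\Phi^-_+$ using (ii), and the second equation is the mirror computation. For (f), $[\mathrm{d},\Phi^-_+]=\tfrac12(\Id_{\mathrm{dot}}\circ\Phi^-_+-\Phi^-_+\circ\Id_{\mathrm{dot}})=\Id_{\mathrm{dot}}\circ\Phi^-_+$ by (ii); calling this cobordism $\Id(1)_{\mathrm{dot}}$ gives the first equality of (f), and the identical computation for $\mathrm{u}$ gives $[\mathrm{u},\Phi^-_+]=-\Id_{\mathrm{dot}}\circ\Phi^-_+$.

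The main obstacle is relation (ii): unlike everything else it is a statement about the rather technical sign and decoration conventions, not a topological identity. I would derive it from the combinatorial relations of Definition \ref{defin-category} --- it is the ``type $-1$'' behaviour already isolated in Lemma \ref{lemm-calc}(d), namely that pushing a decoration change $\Phi^-_+$ past $\Id_{\mathrm{dot}}$ costs a sign --- being careful with the orientation of the dotted cylinder so that the sign comes out as $-1$ rather than $+1$. As a consistency check one may also read (ii) off the algebraic model of Lee's variant recalled at the start of this section, where $\Id_{\mathrm{dot}}$ is multiplication by $X$ on $A=R[X]/(X^2-1)$ and $\Phi^-_+$ sends $X\mapsto-X$, so that $\Phi^-_+\circ\Id_{\mathrm{dot}}$ and $\Id_{\mathrm{dot}}\circ\Phi^-_+$ indeed differ by a sign. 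Everything else is robust under the numbering and orientation choices involved, since the whole computation lives in the local endomorphism ring of one string modulo the (dot-)relations.
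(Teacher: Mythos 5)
Your decomposition $\mathrm{d},\mathrm{u}=\tfrac12\bigl(\Id^+_+\pm\Id_{\mathrm{dot}}\bigr)$ and the reduction of (a)--(f) to the two input relations $\Id_{\mathrm{dot}}\circ\Id_{\mathrm{dot}}=\Id^+_+$ (from the $t=1$ dot-relation) and $\Phi^-_+\circ\Id_{\mathrm{dot}}=-\Id_{\mathrm{dot}}\circ\Phi^-_+$ is exactly the paper's intended argument, and all your algebraic bookkeeping for (a)--(f) checks out. One small caveat about the geometric source of the sign in your relation (ii): the paper does not obtain it from the ``type $-1$'' mechanism of Lemma \ref{lemm-calc}(d) as you suggest (the dotted cylinder is a connected orientable cobordism carrying the default indicator $+1$, so that mechanism alone would produce no sign), but from the separate fact the proof explicitly warns about, namely that ``the dot represents an handle'' and a handle carries its own sign rule with respect to $\Phi^-_+$ among the combinatorial relations of Definition \ref{defin-category}; your algebraic sanity check against $A=R[X]/(X^2-1)$, where the dot acts as multiplication by $X$ and $\Phi^-_+$ as $X\mapsto -X$, correctly confirms the sign either way.
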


\begin{proof}
All equations are straightforward to prove. One has to use the dot-relations from Figure \ref{figure3-1} and the relations from Definition \ref{defin-category}.

In (d)+(f) the surface $\Id(1)_{\mathrm{dot}}$ denotes an identity with an extra dot and $+1$ as an indicator.

Beware that the dot represents an handle. This forces a sign change after composition with the cobordism $\Phi^-_+$. The reader should compare this with the relations in Definition \ref{defin-category}.
\end{proof}

Now we take a look at the Karoubi envelope $\kom(\KAR(\ucob_R(k)))$. The discussion above shows that there is an isomorphism{\huge\[
\text{\none}\simeq\text{\down}\oplus\text{\up}.
\]}
With this notation we get
{\huge\[
\text{\smo}\simeq\text{\ddp}\oplus\text{\dup}\oplus\text{\udp}\oplus\text{\uup}
\]}
and analogous
{\huge\[
\text{\hsmo}\simeq\text{\ddpp}\oplus\text{\dupp}\oplus\text{\udpp}\oplus\text{\uupp}.
\]}
Recall that the standard orientation for the complex $\bn{\overcrossing}$ is (see e.g. Figure \ref{figure0-main})
\[
\bn{\overcrossing}=\text{\du}\xrightarrow{S(1)^{++}_{++}}\text{\ril}.
\]
To avoid mixing the notions of the down and up-colours and the orientations we denote this complex simply as $\bn{\overcrossing}^{++}_{++}$, i.e. standard orientations for all strings. Moreover, under the convention left=first top subscript, right=second top subscript, bottom=first bottom subscript and top=second bottom subscript, a notation like $\bn{\overcrossing}^{+-}_{-+}$ makes sense.

The following theorem is a main observation of this section.
\begin{theo}\label{theo-karoubi}
Let $(C,d)\simeq(C',d')$ denote two homotopy equivalent complexes. Then, in $\ukob_R(k)$, there are sixteen chain homotopies (only four are illustrated, but it should be clear how the rest look like)
\[
\begin{xy}
  \xymatrix{
      \bn{\overcrossing}^{++}_{++}\simeq \text{\raisebox{-0.1cm}{\dup}}\oplus\text{\raisebox{-0.1cm}{\udp}}\xrightarrow{0}\text{\raisebox{-0.1cm}{\dupp}}\oplus\text{\raisebox{-0.1cm}{\udpp}}, & \bn{\overcrossing}^{+-}_{-+}\simeq \text{\raisebox{-0.1cm}{\ddp}}\oplus\text{\raisebox{-0.1cm}{\uup}}\xrightarrow{0}\text{\raisebox{-0.1cm}{\ddpp}}\oplus\text{\raisebox{-0.1cm}{\uupp}},\\
      \bn{\overcrossing}^{++}_{+-}\simeq \text{\raisebox{-0.1cm}{\dup}}\oplus\text{\raisebox{-0.1cm}{\udp}}\xrightarrow{0}\text{\raisebox{-0.1cm}{\ddpp}}\oplus\text{\raisebox{-0.1cm}{\uupp}}, & \bn{\overcrossing}^{+-}_{++}\simeq \text{\raisebox{-0.1cm}{\ddp}}\oplus\text{\raisebox{-0.1cm}{\uup}}\xrightarrow{0}\text{\raisebox{-0.1cm}{\dupp}}\oplus\text{\raisebox{-0.1cm}{\udpp}}.
      }
\end{xy}
\]
Moreover, similar formulas hold for $\bn{\undercrossing}$. 
\end{theo}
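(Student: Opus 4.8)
The plan is to run, in our decorated setting, the Bar-Natan--Morrison argument for Lee theory (see \cite{bnsm}). Since $2^{-1}\in R$ is in force throughout this section, Lemma \ref{lemm-idem}(a)--(c) says that $\mathrm d$ and $\mathrm u$ are complementary orthogonal idempotents on a single strand, so in $\KAR(\ucob_R(k))$ there is a splitting $\none\simeq\down\oplus\up$. Taking disjoint unions of two copies of this splitting gives the decompositions $\smo\simeq\ddp\oplus\dup\oplus\udp\oplus\uup$ of the $0$-resolution and $\hsmo\simeq\ddpp\oplus\dupp\oplus\udpp\oplus\uupp$ of the $1$-resolution that appear in the statement. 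Hence it is enough to describe the differential of $\bn{\overcrossing}^{++}_{++}$, i.e.\ of the single saddle $S(1)^{++}_{++}\colon\du\to\ril$, in these bases and simplify.

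First I would compute the induced $4\times4$ matrix $D$ of cobordisms whose entries are $P'\circ S(1)^{++}_{++}\circ P$ for the idempotent projections $P$ (onto a colour summand of the source) and $P'$ (onto one of the target). This is a finite calculation with the dot-relations of Figure \ref{figure3-1} (with $t=1$), the relations of Definition \ref{defin-category} and Lemma \ref{lemm-idem}. Two observations make it collapse. First, the saddle near a crossing is a connected surface, so a dot on any of its boundary collars slides freely and, since $\Id_{\mathrm{dot}}^{2}=\Id$ by the $t=1$ dot-relation, writing $\mathrm d=\tfrac12(\Id+\Id_{\mathrm{dot}})$ and $\mathrm u=\tfrac12(\Id-\Id_{\mathrm{dot}})$ shows that $S(1)^{++}_{++}$ precomposed with a mismatched pair of colours vanishes; the same computation on the target side shows that every entry of $D$ with source colour $\dup$ or $\udp$, and every entry with target colour $\dupp$ or $\udpp$, is zero. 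Second, on the remaining block $\ddp\oplus\uup\to\ddpp\oplus\uupp$ the cross terms are killed by orthogonality of $\mathrm d,\mathrm u$, while $\ddp\to\ddpp$ and $\uup\to\uupp$ are isomorphisms by a neck-cutting computation (the analogue of the Lee-theory computation in \cite{bnsm}; the stray factors of $\tfrac12$ are invertible). Thus $D$ is, after reordering, the block matrix $\left(\begin{smallmatrix}\mathrm{iso}&0\\0&0\end{smallmatrix}\right)$.

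A Gaussian elimination (Bar-Natan's simplification move, available in $\kom(\mat(\KAR(\ucob_R(k))))$, see \cite{bn1}) now removes $\ddp,\uup$ from homological degree $0$ together with $\ddpp,\uupp$ from degree $1$; because all the other entries of $D$ were already zero, no new differentials are created and the left-over complex is exactly $\dup\oplus\udp\xrightarrow{0}\dupp\oplus\udpp$, which is the first stated equivalence. The remaining fifteen equivalences for $\bn{\overcrossing}$ follow by reversing the orientations of the incoming and outgoing strands: this amounts to pre- and post-composing the whole complex with tensor factors of $\Phi^-_+$, which by Lemma \ref{lemm-calc}(d) only changes signs (absorbed into the decorated-circuit-algebra structure) and, by Lemma \ref{lemm-idem}(e), interchanges the colours $\mathrm d$ and $\mathrm u$ on the flipped strands, so it simply permutes the four colour summands; this is why the surviving summands are $\dup\oplus\udp$ or $\ddp\oplus\uup$ according to the parity of flips. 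The undercrossing is handled identically, with its $1$-resolution playing the role of the $0$-resolution. Finally Proposition \ref{prop-karoubi} transports the resulting homotopy equivalences back to $\ukob_R(k)$.

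The main obstacle is the second step: carrying out the $4\times4$ computation with all signs, $\Phi$-corrections and indicators correct under the saddle-decoration conventions of Definition \ref{defin-saddlestructure}, and in particular verifying that the matched diagonal block is genuinely invertible (so the Gaussian elimination is legitimate) via a careful neck-cutting argument. The vanishing of the mismatched entries is the place where the hypothesis $2^{-1}\in R$ is really used, exactly as in \cite{bnsm}; by contrast, the bookkeeping needed to pass from the base case to all sixteen (and then to $\bn{\undercrossing}$) is routine once Lemma \ref{lemm-idem}(e) and Lemma \ref{lemm-calc}(d) are invoked.
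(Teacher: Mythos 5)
Your proposal takes essentially the same route as the paper's proof: pass to the induced $4\times 4$ differential in $\KAR(\ucob_R(k))$, kill the mismatched and off-diagonal entries by dot-sliding and orthogonality of $\mathrm d$ and $\mathrm u$, verify the two matched-colour entries are isomorphisms by neck-cutting (with the harmless $\tfrac12$ factor), Gaussian-eliminate them, and propagate via Lemma~\ref{lemm-idem}(e) and Lemma~\ref{lemm-calc}(d) to the other fifteen cases and to $\bn{\undercrossing}$. The one small quibble is your closing appeal to Proposition~\ref{prop-karoubi}: it is not actually applicable here, since the simplified two-term complexes still carry idempotent labels and so live in $\kom(\KAR(\ucob_R(k)))$ rather than in $\ukob_R(k)$ itself; the paper's proof simply remains in the Karoubi envelope at this stage and reserves Proposition~\ref{prop-karoubi} for the closed-up statement in Theorem~\ref{theo-leedeg}.
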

\begin{proof}
We use the observations from above, i.e. in $\kom(\KAR(\ucob_R(k)))$ the differential of $\bn{\overcrossing}^{++}_{++}$ is a $4\times 4$-matrix of saddles. Hence, for $\bn{\overcrossing}^{++}_{++}$ we get (for simplicity write $S=S(1)^{++}_{++}$ and $S_{\mathrm{d}}$ and $S_{\mathrm{u}}$ for the saddle under the action of down and up)
\[
\begin{xy}
  \xymatrix{
       \text{\raisebox{-0.1cm}{\ddp}}\oplus\text{\raisebox{-0.1cm}{\udp}}\oplus\text{\raisebox{-0.1cm}{\dup}}\oplus\text{\raisebox{-0.1cm}{\uup}}\ar[rrr]^{\begin{pmatrix}
      S_{\mathrm{d}} & 0 & 0 & 0\\
      0 & 0 & 0 & 0\\
      0 & 0 & 0 & 0\\
      0 & 0 & 0 & S_{\mathrm{u}}
      \end{pmatrix}} & & & \text{\raisebox{-0.1cm}{\ddpp}}\oplus\text{\raisebox{-0.1cm}{\udpp}}\oplus\text{\raisebox{-0.1cm}{\dupp}}\oplus\text{\raisebox{-0.1cm}{\uupp}}.
      }
\end{xy}
\]
This is true, because all other saddles are killed by the orthogonality relations of the colours down and up.

Note that both non zero saddles are invertible, i.e. their inverses are the saddles
\[
\frac{1}{2}(S\colon \hsmoothing \to \smoothing)_{\mathrm{d}}\;\;\text{and}\;\;-\frac{1}{2}(S\colon \hsmoothing \to \smoothing)_{\mathrm{u}}
\]
with only $+$ as boundary decorations. To see this one uses Lemma \ref{lemm-idem} and the neck cutting relation. Thus we get
\[
\bn{\overcrossing}^{++}_{++}\simeq \text{\raisebox{-0.1cm}{\dup}}\oplus\text{\raisebox{-0.1cm}{\udp}}\xrightarrow{0}\text{\raisebox{-0.1cm}{\dupp}}\oplus\text{\raisebox{-0.1cm}{\udpp}}.
\]
To prove the further statements one has to use the relation (e) of Lemma \ref{lemm-idem}, i.e. the only surviving saddles change according to the action of $\Phi^-_+$. We note that this is a very important observation.

For $\bn{\undercrossing}^{++}_{++}$ one can simply copy the arguments from before.
\end{proof}
\begin{bem}\label{bem-importantsaddles}
We observe a very important fact. The four complexes similar to
\[
\bn{\overcrossing}=\text{\uu}\xrightarrow{S(1)^{+-}_{-+}}\text{\rir},
\] 
i.e. the ones for which we choose a non-alternating orientation of the resolutions, are the only ones that, due to the orthogonality relation for down and up, survive a closing with an even number of v-crossings, i.e. like $1,-1$ in Figure \ref{figure0-4}, in their source and target. So if we only consider v-link diagrams, then any such saddle is a multiplication, i.e. one bottom component, or a comultiplication, i.e. one top component. Therefore such morphisms will always be killed at the bottom (multiplication) or at the top (comultiplication).

And the only saddle that survives the closure of type $0$ in Figure \ref{figure0-4} is the M\"obius cobordism. Latter one is a $0$-morphism if $2$ is invertible (compare to the relations in Definition \ref{defin-category}). Hence, the dot-calculus of Definition \ref{defin-dotcalculus} ensures that all components will survive and the complex will be of the form
\[
\text{\raisebox{-0.1cm}{\ddp}}\oplus\text{\raisebox{-0.1cm}{\uup}}\xrightarrow{0}\text{\raisebox{-0.1cm}{\ddpp}}\oplus\text{\raisebox{-0.1cm}{\uupp}}.
\]
\end{bem}

As an application of the Theorem \ref{theo-semiloc} in Section \ref{circuit} and the Theorems \ref{theo-nonalternating} and \ref{theo-karoubi} above, we get the desired statement for v-link diagrams.
\begin{theo}[Degeneration]\label{theo-leedeg}
Let $L_D$ denote a $n$-component v-link diagram. Then $\bn{L_D}_{\mathrm{Lee}}$ is homotopy equivalent (in $\ukob_R(\emptyset)$) to a chain complex with only zero differentials and $2^n$ generators given by the $2^n$ non-alternating resolutions.

If $n=1$, i.e. $L_D$ is a v-knot diagram, then the two generators are in homology degree $0$.
\end{theo}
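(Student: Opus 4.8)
The plan is to follow the route sketched above: pass to the Karoubi envelope, use the semi-local structure of Section \ref{circuit} to reduce $\bn{L_D}_{\mathrm{Lee}}$ to a gluing of the already-understood complexes of the individual crossings, and then read off what survives. First, by Proposition \ref{prop-karoubi} it is enough to produce the claimed homotopy equivalence after passing to the Karoubi envelope $\KAR(\ucob_R(\emptyset)_l)$, where the idempotents $\mathrm{d},\mathrm{u}$ of Definition \ref{defin-idempotent} split every strand of every resolution; from now on all complexes are taken there. Next I would choose a decorated circuit diagram $\mathcal{CD}$ with one input disk around each classical crossing of $L_D$ and the outer disk capped off (so that the output is closed), realising $L_D=\mathcal{CD}(X_1,\dots,X_c)$ with each $X_i$ a single-crossing c-tangle $\overcrossing$ or $\undercrossing$. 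Each $X_i$ is a c-tangle, hence lies in $\mathrm{NT}^{*}$, so Theorem \ref{theo-semiloc}(d) gives
\[
\bn{L_D}_{\mathrm{Lee}}=\mathcal{CD}\big(\bn{X_1}_{\mathrm{Lee}},\dots,\bn{X_c}_{\mathrm{Lee}}\big),
\]
with $\mathcal{CD}$ acting on complexes through the dot-calculus of Definition \ref{defin-dotcalculus}.

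For each crossing I would then fix the orientations of its two resolutions induced by the global orientation of $L_D$, so that Theorem \ref{theo-karoubi} (together with Remark \ref{bem-importantsaddles}) identifies $\bn{X_i}_{\mathrm{Lee}}$ in the Karoubi envelope with a two-term complex carrying a \emph{zero} differential and two colored summands in each homological degree. Since the circuit operation preserves homotopy equivalence by Theorem \ref{theo-semiloc}(b), $\bn{L_D}_{\mathrm{Lee}}$ is homotopy equivalent to $\mathcal{CD}$ applied to these simplified complexes; and since the glued differential is a sum of terms of the shape $\alpha\circ\mathcal{CD}(\Id,\dots,d_i,\dots,\Id)\circ\beta$ in which every $d_i$ vanishes, the glued complex has zero differential. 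It only remains to identify its chain objects.

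A summand of the glued complex is a choice at each crossing of one of its two surviving colored local pictures --- equivalently, a resolution of $L_D$ together with a $\mathrm{d}/\mathrm{u}$-colouring --- constrained by the orthogonality relations of Lemma \ref{lemm-idem}(b). These relations force the colour to be constant along each strand, and, because $\mathrm{d}$ and $\mathrm{u}$ are unchanged at virtual crossings, force every crossing to be resolved non-alternatingly, exactly as recorded in Remark \ref{bem-importantsaddles}. Thus the surviving summands are the colourings of the non-alternating resolutions of $L_D$; using $\mathrm{d}+\mathrm{u}=\Id$ circle by circle over a fixed non-alternating resolution reassembles a genuine object of $\ucob_R(\emptyset)_l$, so Proposition \ref{prop-karoubi} pushes the homotopy equivalence back into $\ukob_R(\emptyset)$ with the non-alternating resolutions as generators and zero differentials. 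By the bijections of Theorem \ref{theo-nonalternating} there are exactly $2^n$ of them, and when $n=1$ the same theorem places the two non-alternating resolutions in homology degree $0$, which settles the final assertion.

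I expect the main obstacle to be the object-bookkeeping in the last paragraph: one must track, through the circuit gluing, precisely which colored summands of the individual crossing complexes survive the closure of $L_D$ and how the decorations introduced by the dot-calculus (the $\Phi^-_+$'s, the sign changes and the $0$-indicator insertions) act on them, so as to conclude that the residual complex has \emph{only} the non-alternating resolutions as objects and \emph{only} zero differentials. Everything else --- the niceness of $L_D$, the circuit-algebra formalism of Section \ref{circuit}, and the Karoubi descent --- is routine given the results already established.
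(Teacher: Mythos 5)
Your high-level strategy matches the paper's: pass to the Karoubi envelope via Proposition \ref{prop-karoubi}, decompose $L_D$ through a circuit diagram into single-crossing c-tangles, invoke Theorem \ref{theo-semiloc}(d) (niceness of each crossing) and Theorem \ref{theo-karoubi} to replace each local complex by a two-term complex with zero differential and two coloured summands, then reassemble. All of that is sound and is exactly how the paper sets things up.

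The genuine gap is the step you yourself flag as ``the main obstacle'' and then assert rather than prove. You claim that the orthogonality of $\mathrm{d},\mathrm{u}$ along strands together with the fact that $\mathrm{d},\mathrm{u}$ do not change at virtual crossings ``force every crossing to be resolved non-alternatingly.'' This does not follow. The surviving local pictures of Theorem \ref{theo-karoubi} \emph{depend on the chosen orientation of the resolution}: with alternating orientation the survivors are the $\mathrm{d}/\mathrm{u}$-mixed colourings, with non-alternating orientation the survivors are the monochromatic ones. So a resolution with an alternating crossing can perfectly well carry a consistent colouring, provided the two strands through that crossing lie on circles that can be given opposite colours. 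The content of the theorem is precisely that, after also accounting for the $\Phi^-_+$ insertions of the dot-calculus (which by Lemma \ref{lemm-idem}(e) \emph{swap} $\mathrm{d}$ and $\mathrm{u}$) and for Remark \ref{bem-importantsaddles}'s killing of lower-multiplication and upper-comultiplication pieces, no such ``accidental'' colourings survive unless the resolution already admits a non-alternating orientation. The paper proves this by introducing the dual graph of a resolution (vertices are circles, edges labelled by v-crossings or by saddle-positions with their orientation type), choosing the orientation to minimise the number $m$ of alternating crossings, and running an induction on $m$ with a case split on the cycle structure of the dual graph (tree, all cycles with even v-count, some cycle with odd v-count). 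None of this appears in your proposal; without it, the identification of the surviving objects with the non-alternating resolutions is unjustified, and the count of $2^n$ generators via Theorem \ref{theo-nonalternating} cannot be made.

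A smaller point: your final ``reassembly'' via $\mathrm{d}+\mathrm{u}=\Id$ is not quite how Proposition \ref{prop-karoubi} is applied. One needs the target complex (zero differentials on the $2^n$ non-alternating resolutions) to already live in $\kom(\ucob_R(\emptyset)_l)$ and then transport the homotopy equivalence established in the Karoubi envelope back to $\ukob_R(\emptyset)$; the ``circle-by-circle'' gluing of idempotent summands you describe needs to be checked to land on genuine resolutions rather than formal sums, and this again rests on the combinatorial step you have skipped.
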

\begin{proof}
We will repress the notion of the formal signs of the morphisms to maintain readability. Moreover, we will choose a specific orientation for the resolutions. We can do both because of Lemma \ref{lemma-everythingfine}.

So the main part of the proof will be to choose the orientations in a good way and use Theorem \ref{theo-semiloc} (the reader may think of tensors). Recall that the number of crossings is finite. Hence, we can choose an orientation of any resolution such that the number $m$ of alternating crossings is minimal.

We observe that Theorem \ref{theo-semiloc} can be used because every crossing itself is a nice v-tangle diagram. Moreover, with Theorem \ref{theo-karoubi}, we see that the complex will be homotopy equivalent to a complex with only $0$-differentials. Hence, the only remaining thing is to show that the number of generators will work out as claimed.

Note that, if a resolution contains a lower part of a multiplication or a upper part of a comultiplication, then by Remark \ref{bem-importantsaddles} these resolution is killed. Moreover, we can ignore top and bottom parts of $\theta$, since they will always be non-alternating.

Now we define the \emph{dual graph of an resolution}, denoted $\mathcal D$, as follows. Recall that a resolution is a four valent graph without any c-crossings. Any edges of this graph is a vertex of $\mathcal D$. Two vertices are connected with a labeled edge iff they are connected by a v-crossing or a top part of a resolution of a multiplication or a bottom part of a comultiplication. First edges should be labeled $v$, the second type of edges should get an labeling thats corresponds to the given orientation of the resolution. We will work with the simple graph of that type, i.e. remove circles or parallel edges of the same type. See Figure \ref{figure4-6}, i.e. the figure shows two resolutions from Figure \ref{figure4-2} and their dual graphs. 
\begin{figure}[ht]
  \centering
     \includegraphics[width=0.8\linewidth]{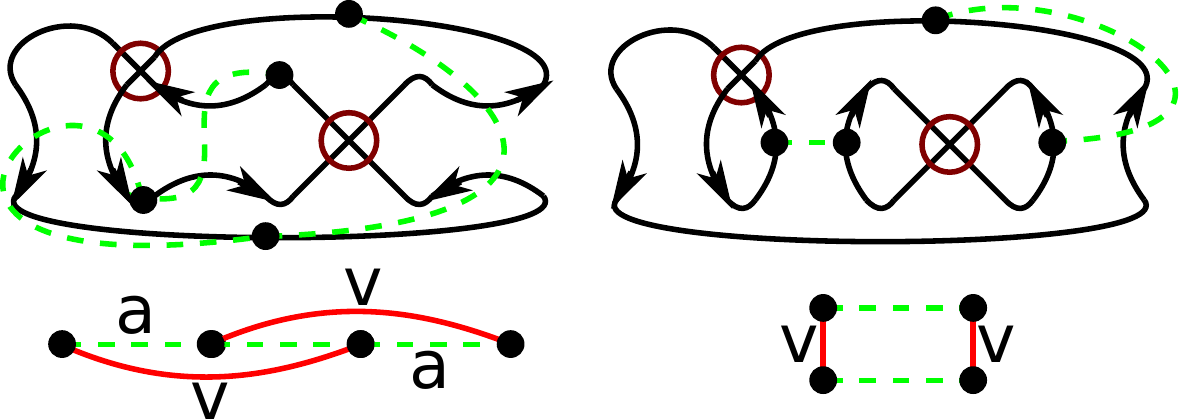}
  \caption{Two resolutions and their dual graphs.}
  \label{figure4-6}
\end{figure}

The advantage of this notation is that the question of surviving resolutions simplifies to the question of a colouring of the dual graph, i.e. a colouring of the dual graph is a colouring with two colours, say red and green, such that every $v$-labeled edge has two equally coloured adjacent vertices, every alternating crossing has also two different coloured adjacent vertices, but every non-alternating crossing has also two equal colours at adjacent vertices.

The reader should compare this to Theorem \ref{theo-karoubi} and Remark \ref{bem-importantsaddles}.

Then, because of Theorem \ref{theo-karoubi}, a resolution will have surviving generators iff it does not contain lower parts of multiplication or upper parts of comultiplications and, given an orientation of the resolution, it allows such a colouring.

For example, the left resolution in Figure \ref{figure4-6} do not allow such an colouring, but the right does.

The rest is just a case-by-case check, i.e. we have the following three cases. We use induction over $m$.
\begin{itemize}
\item[(i)] The dual graph of the resolution is a tree, i.e. no circles.
\item[(ii)] All circles in the dual graph have an even number of v-labeled edges.
\item[(iii)] There is one circle in the dual graph with an odd number of v-labeled edges.
\end{itemize}
If $m=0$, i.e. the resolution is non-alternating, we get exactly the claimed number of generators, since there are, by construction, no lower parts of multiplication or upper parts of comultiplications and the dual graph is of type (i) or (ii) and in both cases the graph can be coloured.

So let $m>0$ and let $C$ be an alternating crossing in a resolution $R$. As in Remark \ref{bem-importantsaddles} the whole resolution is killed if the $C$ is a lower part of a multiplication or an upper part of a comultiplication. Hence, we can assume that all alternating crossings of $R$ are either top components of multiplications or bottom components of comultiplications.

So we only have to check the three cases from above. If the resolution is one of type (i), then it is possible to choose the orientations in such a way that all crossings are non-alternating, i.e. this would be a contradiction to the minimality of $m$.

If the resolution is of type (ii), then the resolution only survives, i.e. the dual graph allows a colouring, iff the number of other alternating crossings in every circle is even. But in this case one can also choose an orientation with only non-alternating crossings. Hence, we would get a contradiction to the minimality of $m$ again.

An analogous argument works in the case of type (iii), i.e. the only surviving resolutions will also allow an non-alternating resolution.   

Hence, only non-alternating resolutions generate non vanishing objects. Moreover, any non-alternating resolution will create exactly two of these generators. Thus, with Theorem \ref{theo-nonalternating} the statement follows.
\end{proof}

Now we use the functor $\mathcal F_{\mathrm{Lee}}$ to get the corresponding statement in the category $\RMOD$. The reader may compare this to the classical results (e.g. see \cite{mtv}).
\begin{satz}\label{prop-algebraicdegen}
Let $L_D$ denote a $n$-component v-link diagram. Then we have the following.
\begin{itemize}
\item[(a)] If $R=\zet$, then their is an isomorphism
\[
H(\mathcal F_{\mathrm{Lee}}(L_D),R)\iso\bigoplus_{2^n}\zet\oplus \mathrm{Tor},
\]
where $\mathrm{Tor}$ is all torsion.
\item[(b)] If $R=\rat$ or $R=\zet\left[\frac{1}{2}\right]$, then their is an isomorphism
\[
H(\mathcal F_{\mathrm{Lee}}(L_D),R)\iso\bigoplus_{2^n}R.
\]
\item[(c)] The only possible torsion is $2$-torsion.
\end{itemize}
\end{satz}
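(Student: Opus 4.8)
The plan is to deduce everything from the geometric degeneration of Theorem \ref{theo-leedeg} by pushing it through the uTQFT functor $\mathcal F_{\mathrm{Lee}}$ and, for the integral statement, by flat base change. First I would record that $\mathcal F_{\mathrm{Lee}}$ is $R$-linear, hence additive, hence carries chain homotopy equivalences to chain homotopy equivalences and a zero-differential complex to a zero-differential complex; moreover, since $\RMOD$ is idempotent-complete, $\mathcal F_{\mathrm{Lee}}$ extends to the Karoubi envelope $\KAR(\ukob_R(\emptyset))$ that appears in the proof of Theorem \ref{theo-leedeg}. For $R=\rat$ or $R=\zet[\tfrac{1}{2}]$ the hypothesis $2^{-1}\in R$ holds, so Theorem \ref{theo-leedeg} applies and $\bn{L_D}_{\mathrm{Lee}}$ is homotopy equivalent to a zero-differential complex $K$ whose $2^n$ indecomposable generators are the non-alternating resolutions with their canonical $\mathrm{d}/\mathrm{u}$-decoration (one idempotent per circle, determined by the corresponding orientation through Theorem \ref{theo-nonalternating}). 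Applying $\mathcal F_{\mathrm{Lee}}$ and invoking Proposition \ref{prop-karoubi} gives $H(\mathcal F_{\mathrm{Lee}}(L_D),R)\iso\mathcal F_{\mathrm{Lee}}(K)$, a direct sum over the $2^n$ generators.

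Next I would evaluate $\mathcal F_{\mathrm{Lee}}$ on a single generator. Under Lee's Frobenius algebra $A$ the idempotents $\mathrm{d},\mathrm{u}$ of Definition \ref{defin-idempotent} correspond to the orthogonal idempotents $e_{\pm}=\tfrac{1}{2}(1\pm X)\in A$, which split $A\iso Re_+\oplus Re_-$ with each summand free of rank one (this is the only place $2^{-1}$ enters algebraically, and it is consistent with Lemma \ref{lemm-idem}). Hence a generator of $K$, being the image of a product of one such idempotent per circle of a non-alternating resolution, is sent by $\mathcal F_{\mathrm{Lee}}$ to a tensor product of copies of $Re_{\pm}$, i.e. to a free $R$-module of rank one. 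Summing over the $2^n$ generators yields $H(\mathcal F_{\mathrm{Lee}}(L_D),R)\iso\bigoplus_{2^n}R$, which is part (b).

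For (a) and (c) the ring $R=\zet$ does not have $2^{-1}$, so I would not use Theorem \ref{theo-leedeg} directly but argue by base change. The complex $\mathcal F_{\mathrm{Lee}}(L_D)$ over $\zet$ is a bounded complex of finitely generated free abelian groups, since a resolution with $c$ circles contributes $A^{\otimes c}$ with $A\iso\zet^2$. Because $\rat$ and $\zet[\tfrac{1}{2}]$ are flat over $\zet$ and $\mathcal F_{\mathrm{Lee}}$ commutes with extension of scalars (being defined via the Frobenius algebra $A$), part (b) gives
\[
H(\mathcal F_{\mathrm{Lee}}(L_D),\zet)\otimes_\zet\rat\iso\rat^{2^n}\quad\text{and}\quad H(\mathcal F_{\mathrm{Lee}}(L_D),\zet)\otimes_\zet\zet[\tfrac{1}{2}]\iso\zet[\tfrac{1}{2}]^{2^n}.
\]
The first isomorphism forces the free rank of $H(\mathcal F_{\mathrm{Lee}}(L_D),\zet)$ to be exactly $2^n$; the second then forces its torsion subgroup to vanish after inverting $2$, hence to be $2$-primary, since any odd-primary torsion would survive. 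This proves (a), and the $2$-primary conclusion is (c). If "$2$-torsion" is meant in the strict sense of order exactly $2$, I would upgrade the argument by a Bockstein / mod-$2$ reduction argument, using that the only denominators entering the reduction of Theorem \ref{theo-karoubi} (the factors $\pm\tfrac{1}{2}$ in the inverse saddles) occur linearly.

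I expect the only genuinely non-formal point to be the evaluation of $\mathcal F_{\mathrm{Lee}}$ on the reduced generators of Theorem \ref{theo-leedeg}: one must be sure that each of the $2^n$ generators contributes a single free copy of $R$ and not a larger module, which comes down to matching $\mathrm{d},\mathrm{u}$ with $e_{\pm}$ circle by circle and to the fact that precisely the non-alternating colourings survive. Everything else — additivity of $\mathcal F_{\mathrm{Lee}}$, flatness of $\rat$ and $\zet[\tfrac{1}{2}]$, and the rank/torsion bookkeeping — is routine homological algebra, and the possible sharpening of (c) to torsion of order exactly $2$ is the other delicate step.
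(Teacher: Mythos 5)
Your proof is correct and follows essentially the same route as the paper: part~(b) is obtained by pushing the reduced complex of Theorem~\ref{theo-leedeg} through $\mathcal F_{\mathrm{Lee}}$ (your identification of $\mathrm{d},\mathrm{u}$ with $e_{\pm}=\tfrac{1}{2}(1\pm X)$ is exactly the step the paper leaves implicit), and parts~(a) and~(c) are deduced from~(b) via flat base change with $R=\rat$ and $R=\zet\left[\tfrac{1}{2}\right]$, which is precisely the paper's universal-coefficients argument specialized to flat coefficient rings where the $\mathrm{Tor}$ term vanishes. You add more detail in~(b) and float a possible Bockstein sharpening of~(c), but the substance and structure of the argument coincide with the paper's.
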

\begin{proof}
The statement (b) follows from Theorem \ref{theo-leedeg} above. Recall that the whole construction requires that $2$ is invertible.

For (a)+(c) recall the universal coefficients theorem. i.e. their is a short exact sequence
\[
0\rightarrow H_*(\mathcal F_{\mathrm{Lee}}(L_D),\zet)\otimes_{\zet} R\rightarrow H_*(\mathcal F_{\mathrm{Lee}}(L_D),R)\rightarrow \mathrm{Tor}(H_{*+1}(\mathcal F_{\mathrm{Lee}}(L_D),\zet)),R)\rightarrow 0.
\]
Therefore (a) follows from (b) with $R=\rat$, since the Tor-functor will vanish in this case. And (c) follows from (b) with $R=\zet\left[\frac{1}{2}\right]$, since all non 2-torsion elements will exits for $\zet$ and $\zet\left[\frac{1}{2}\right]$.
\end{proof}

%Literatur

\vspace{0.1in}

\noindent D.T.: { \sl \small Courant Research Center ``Higher Order Structures'', University of G\"{o}ttingen, G\"{o}ttingen, Germany; 
Mathematisches Institut, Georg-August-Universit\"{a}t G\"{o}ttingen, G\"{o}ttingen, Germany} 
\newline \noindent {\tt \small email: dtubben@uni-math.gwdg.de}
\end{document}